\newcommand{\bea}{\begin{eqnarray}} 
\newcommand{\eea}{\end{eqnarray}} 
\newcommand{\bee}{\begin{eqnarray*}} 
\newcommand{\eee}{\end{eqnarray*}} 
\newcommand{\al}{\begin{align*}} 
\newcommand{\eal}{\end{align*}} 
\newcommand{\be}{\begin{equation}} 
\newcommand{\ee}{\end{equation}} 
\newcommand{\bem}{\begin{pmatrix}} 
\newcommand{\eem}{\end{pmatrix}}
\def\a{\alpha} 
\def\c{\gamma}
\def\D{\Delta}
\newcolumntype{R}{ >{$}r <{$}}
\newcolumntype{C}{ >{$}c <{$}}
\newcolumntype{L}{ >{$}l <{$}}
\newcolumntype{F}{>{\centering\arraybackslash}m{1.5cm}}
\newcommand{\gt}[1]{\mathfrak{#1}}
\newcommand{\mc}[1]{\mathcal{#1}}
\newcommand{\comment}[1]{}
\newcommand{\RR}{{\mathbb R}}%Reals
\newcommand{\CC}{{\mathbb C}}%Complex
\newcommand{\ZZ}{{\mathbb Z}}%Integers
\newcommand{\QQ}{{\mathbb Q}}%Rationals
\newcommand{\HH}{{\mathbb H}}%quaternions
\newcommand{\FF}{{\mathbb F}}%finite field
\newcommand{\Inv}{\operatorname{Inv}}
\newcommand{\Id}{\operatorname{Id}}
\newcommand{\tr}{\operatorname{{tr}}}
\newcommand{\Tr}{\operatorname{{Tr}}}
\newcommand{\ex}{\operatorname{e}} %Number theory exp
\newcommand{\Ex}{\operatorname{Ex}}
\newcommand{\wh}{{\rm wh}}	%weakly holomorphic
\newcommand{\xmod}{{\rm \;mod\;}}
\newcommand{\Th}{\textsl{Th}}
\newcommand{\PSL}{\operatorname{\textsl{PSL}}}    %PSL group
\newcommand{\SL}{\operatorname{\textsl{SL}}}      %SL group
\newcommand{\mpt}{\widetilde{\SL}_2}      %Metaplectic double cover of \SL_2
\newcommand{\vn}{V^{\natural}} %Moonshine module
\newcommand{\MM}{\mathbb{M}}	%monster group
\newcommand{\SQ}{\operatorname{\textsl{SQ}}}
\newcommand{\pd}{\uplambda}
\DeclarePairedDelimiter\ceil{\lceil}{\rceil}
\DeclarePairedDelimiter\floor{\lfloor}{\rfloor}
\newtheorem{thm}{Theorem}[section]
\newtheorem*{thm*}{Theorem}
\newtheorem{pro}[thm]{Proposition}
\newtheorem{con}[thm]{Conjecture}
\theoremstyle{definition}
\theoremstyle{remark}
\newtheorem{rmk}[thm]{Remark}
\numberwithin{equation}{subsection}
\begin{document}

\setstretch{1.26}

\renewcommand{\thefootnote}{\fnsymbol{footnote}} 

\title{
\vspace{-35pt}
\textsc{\huge{ 
Two New Avatars of 
Moonshine for\\ 
\hspace{2.7pt} 
the Thompson Group
\footnotetext{\emph{MSC2010:} 11F22, 11F27, 11F37, 11F50, 20C34.}     
}}
    }

\renewcommand{\thefootnote}{\arabic{footnote}}

\author[1]{John F. R. Duncan\thanks{jduncan@gate.sinica.edu.tw}}
\author[2]{Jeffrey A. Harvey\thanks{j-harvey@uchicago.edu}}
\author[3]{Brandon C. Rayhaun\thanks{brayhaun@stanford.edu}}

\affil[1]{Institute of Mathematics, Academia Sinica, Taipei, Taiwan.}
\affil[2]{Enrico Fermi Institute and Department of Physics, University of Chicago, Chicago, IL 60637, U.S.A.}
\affil[3]{Institute for Theoretical Physics, Stanford University, Palo Alto, CA 94305, U.S.A.}

\date{\vspace{-.45in}}

\maketitle

\begin{center}
\vspace{-.12in}
\emph{Dedicated to the memory of Simon P.~Norton.}
\vspace{.12in}
\end{center}

\abstract{
The Thompson sporadic group admits special relationships to modular forms of two kinds. 
On the one hand, last century's generalized moonshine for the monster equipped the Thompson group with a module for which the associated McKay--Thompson series are distinguished weight zero modular functions. 
On the other hand, Griffin and Mertens verified the existence of a module for which the McKay--Thompson series are distinguished modular forms of weight one-half, that were assigned to the Thompson group in this century by the last two authors of this work. 
In this paper we round out this picture by proving the existence of two new avatars of Thompson moonshine: a new module giving rise to weight zero
modular functions, and a new module giving rise to forms of weight one-half. 
We explain how the newer modules are related 
to the 
older ones by 
Borcherds products and traces of singular moduli. 
In so doing 
we 
clarify the relationship between the previously known modules, and expose a new arithmetic aspect to moonshine for the Thompson group. 
We also present evidence that this phenomenon extends to a correspondence between other cases of generalized monstrous moonshine and penumbral moonshine, and thereby enriches these phenomena with counterparts in weight one-half and weight zero, respectively. 
}

\clearpage

\tableofcontents

%------------------------------------------------------------------%
\section{Introduction}\label{sec:int}
%------------------------------------------------------------------%

A main goal of the moonshine program is to explain connections between distinguished objects from 
finite group theory 
and 
modular form theory 
by constructing suitably rich 
objects in mathematical physics and algebra.
This goal has been attained to great effect 
in the setting of monstrous and generalized monstrous moonshine, 
where the modular forms arising have weight zero, 
but less so in the more recently appearing setup of umbral and penumbral moonshine, where the relevant weight is one-half. 
In a companion paper \cite{pmp} we 
construct a ``lift'' of the singular theta lift of \cite{Harvey:1995fq,Borcherds:1996uda} that can be used to translate between moonshine 
in weight one-half and moonshine in weight zero. 
In this work we explore the consequences of this construction in a special case, and in so doing uncover two new avatars of moonshine for the Thompson group. 
We also illuminate a more expansive landscape that connects the moonshine in these two weights.

\subsection{Weight Zero}\label{sec:int-zer}

The story of moonshine in weight zero begins with
monstrous moonshine, as formulated in \cite{Conway:1979qga}. 
Here, the connection to be explained is one between the representation theory of the sporadic simple monster group, $\MM$, and the Fourier expansions of a collection of principal moduli $\{T_g\}_{g\in\MM}$ (also known as Hauptmoduln) for genus zero subgroups of $\SL_2(\RR)$. (See \S~\ref{sec:pre-mod} for the relevant definitions.) 
This connection is detailed in the {``moonshine conjectures''} of op.\ cit. 
It entails, 
in particular, the initial observations of McKay and Thompson which gave birth to the subject (see \cite{Tho_FinGpsModFns,Tho_NmrlgyMonsEllModFn}), wherein
coefficients of the normalized elliptic modular invariant, 
\begin{align}\label{normalizedj}
J(\tau) = \sum_{n\geq -1} c(n)q^n  = q^{-1} + 196884q + 21493760q^2+864299970q^3+\dots \ \ \  (q=\ex(\tau)),
\end{align}
are decomposed into non-trivial sums of dimensions of irreducible representations of $\MM$. E.g.\ 
\begin{gather}
\begin{split}\label{eqn:initialobservations}
196884 &= 196883 + 1,\\ 
21493760 &= 21296876+196883 + 1,\\
864299970&=
842609326
+
21296876
+
2\times 196883
+
2\times 1
,
\end{split}
\end{gather} 
and so on. (In (\ref{normalizedj}) and henceforth, we employ the notation $\ex(x):= e^{2\pi i x}$.) 

The algebraic construction which provides the stage upon which 
the monster group $\MM$ and the principal moduli $T_g$ both act---and which also has an interpretation in physics---is a particular 
vertex operator algebra (VOA) $V^\natural$, known as the moonshine module. 
It was constructed explicitly by Frenkel, Lepowsky and Meurman \cite{flm,FLMBerk,frenkel1989vertex}, and in physical terms can be described as a chiral conformal field theory (CFT) defined by an asymmetric $\ZZ/2\ZZ$-orbifold of a $c=24$ theory of compact chiral bosons based on the Leech lattice. The symmetries of 
$V^\natural$ are described by the monster group $\MM$, and---as was ultimately shown by Borcherds \cite{Borcherds1992}---its {\em twined} partition functions,
\begin{align}\label{McKayThompsonSeries}
T_g(\tau) := \tr\left(gq^{L_0-1}\big|{V^\natural}\right), \ \ \ \ \ \ g\in\mathbb{M},
\end{align}
also known as McKay--Thompson series, recover the principal moduli predicted in \cite{Conway:1979qga} precisely.
(In (\ref{McKayThompsonSeries}) $L_0$ denotes the zero mode of the stress-energy tensor of the chiral CFT that underlies $V^\natural$.) Specializing $g$ to be the identity element in (\ref{McKayThompsonSeries}) recovers $T_{e} = J$, which endows the normalized elliptic modular invariant (\ref{normalizedj}) with an interpretation as the graded dimension of $V^\natural$, and explains the observations (\ref{eqn:initialobservations}) of McKay and Thompson.

It develops that a ``twist'' of this construction gives context to a broader set of conjectures 
formulated by Norton (see \cite{norton,norton-rev}), 
which together are known as {generalized (monstrous) moonshine}.
These conjectures extend the relationship between $\MM$ and $\{T_g\}_{g\in \MM}$ to one between central extensions $K_h.C_\MM(h)$ of centralizer subgroups $C_\MM(h)$ of $\MM$, and also a larger collection of functions $\{T_{h,g}\}_{g\in K_h.C_\MM(h)}$, 
each of which is either a principal modulus or constant. 
Here $h$ is any element of $\MM$, and the content of monstrous moonshine is obtained by specializing $h$ to be the identity element, $T_{g} = T_{e,g}$. 

The physical intuition \cite{dixon1988} behind generalized monstrous moonshine comes from orbifold conformal field theory \cite{dixon1985strings,dixon1986strings,Dixon:1986qv,Hamidi:1986vh}: 
Roughly speaking, the idea is that $T_{h,g}$ should be recovered as the $g$-twined partition function of the moonshine module CFT, but where the usual periodic boundary conditions, $\varphi(e^{2\pi i}z) =\varphi(z)$, are replaced by more general twisted boundary conditions in which operators are only required to come back to themselves up to the action of a symmetry group element, $\varphi(e^{2\pi i}z)=h\cdot \varphi(z)$. 
Denoting this $h$-twisted sector of the theory by $V_h^\natural$, we then have the expectation that the {\em $h$-twisted $g$-twined} partition functions agree
with the functions described by Norton, 
\begin{align}\label{eqn:int-zer:Thg}
T_{h,g}(\tau) = \tr\left(gq^{L_0-1}\big|V^\natural_h\right), \ \ \ \ \ h\in\MM,\ g\in K_h.C_\MM(h).
\end{align}
(That the right-hand side of (\ref{eqn:int-zer:Thg}) is indeed a principal modulus or constant was proven by Carnahan in \cite{Carnahan:2012gx}. 
Many of the $T_{h,g}$ belong to the list of replicable functions given in \cite{for-mck-nor}.)

\subsection{Infinite Products}\label{sec:int-inf}

Although the introduction of the moonshine module provided a conceptually satisfying explanation for why there should be interaction between objects from such disparate areas of mathematics in the first place, there still remained work to be done to prove the moonshine conjectures of \cite{Conway:1979qga}. 
For example, although arguments from VOA theory may be applied to the moonshine module to establish that its twisted and twined series (\ref{eqn:int-zer:Thg}) should have modular properties (cf.\ e.g.\ \cite{MR2793423,Gaberdiel2010}, and see e.g.\ \cite{Dong2000,MR3435813,MR2046807,Zhu_ModInv} for rigorous results along these lines), these arguments are insufficient for demonstrating that the $T_g=T_{e,g}$, let alone the non-constant $T_{h,g}$, are \emph{Hauptmoduln} for genus zero groups. This property makes them much more special than generic modular functions.

To prove that the (untwisted) McKay--Thompson series $T_g$ possess the ``genus zero property'' just mentioned, Borcherds first invented the notion of a generalized Kac--Moody algebra \cite{Borcherds1988,Borcherds1991}---which we henceforth refer to as a Borcherds--Kac--Moody algebra (BKM)---and then constructed a particular example $\mathfrak{m}$, called the {monster Lie algebra} \cite{Borcherds1992} (cf.\ also \cite{Borcherds1990}), which is obtained by applying a suitably defined ``second quantization'' functor to $V^\natural$. As its name suggests, $\mathfrak{m}$ naturally inherits an action of $\MM$ from its construction.  

The twined denominator formulae of the monster Lie algebra BKM impose infinitely many identities upon the $q$-expansion coefficients of the McKay--Thompson series $T_g$, and it develops that these are strong enough to determine them from their first five Fourier coefficients. For example, the denominator formula for $\mathfrak{m}$ is the well-known infinite product formula 
\begin{align}\label{eqn:int-inf:Jsigma-Jtau}
J(\sigma)-J(\tau) = p^{-1}\prod_{m>0}\prod_{n\geq -1}(1-p^mq^n)^{c(mn)} \ \ \ \ \ \  (p = \ex(\sigma)),
\end{align}
which was discovered independently by Koike, Norton and Zagier (cf.\ \cite{koikereplication,ZagierTSM}).
The $c(mn)$ in (\ref{eqn:int-inf:Jsigma-Jtau}) are the coefficients of $J$, as in \eqref{normalizedj}, and according to (\ref{eqn:int-inf:Jsigma-Jtau}) recover dimensions of root spaces in $\gt{m}$. 
Remarkably, there are no terms with mixed powers of $p$ and $q$ on the left-hand side of (\ref{eqn:int-inf:Jsigma-Jtau}), so expanding out the right-hand side of this equation, 
and setting the coefficients of all mixed-power terms to zero, we recover restrictive recursion relations (see e.g.\ (9.1) of \cite{Borcherds1992}) satisfied by the $c(n)$ of \eqref{normalizedj}.

Similar formulae to (\ref{eqn:int-inf:Jsigma-Jtau}) can be obtained for the McKay--Thompson series $T_g$. Namely, we have
\begin{align}\label{eqn:int-inf:Tgsigma-Tgtau}
T_g(\sigma)-T_g(\tau) = p^{-1}\exp\left(
-
\sum_{m>0}
\sum_{n\geq -1}
\sum_{k>0}
c_{g^k}(mn)\frac{p^{mk}q^{nk}}{k}   \right), 
\end{align}
where now $c_{g}(n)$ is the coefficient of $q^n$ in the Fourier expansion of $T_g$. After noting that Hauptmoduln satisfy the same recursion relations, and that the $T_g$ all match known Hauptmoduln to low orders in their $q$-expansions, the conjectures of Conway and Norton follow. Roughly speaking, the proof of generalized monstrous moonshine, due to Carnahan \cite{Carnahan2010,Carnahan2012,Carnahan:2012gx}, proceeds similarly, using twisted versions $\mathfrak{m}_h$ of the monster Lie algebra and their twined denominator formulae to pin down the functions $T_{h,g}$ (cf.\ e.g.\ \cite{hoehn_babymonster} for a special case). Note though that the implementation of this idea involves serious challenges in the general case, and was only relatively recently achieved.

Given the evident power of the monster Lie algebra, it is natural to ask about its physical interpretation. In fact, inspired by work of Frenkel \cite{fre85}, Borcherds made use of the no-ghost theorem \cite{noghost,Brower:1972wj} (see also \cite{Kato:1982im,Banks:1985ff,FGZ86}) in his construction of $\mathfrak{m}$, which is suggestive of an embedding into string theory. Along these lines, we mention recent work of Paquette, Persson and Volpato \cite{Paquette:2016xoo,Paquette:2017xui}, which interprets $\mathfrak{m}$ as an algebra of spontaneously broken gauge symmetries in a particular low-dimensional heterotic string compactification. (See also \cite{Harrison:2018joy,Harrison:2020wxl,Harrison:2021gnp,finitealldirections} for related results.)

\subsection{Weight One-Half}\label{sec:int-woh}

The last decade has witnessed a resurgence of activity in the study of moonshine phenomena, starting in 2010 with the discovery of Mathieu moonshine by Eguchi, Ooguri, and Tachikawa \cite{eguchi2011notes}  in the elliptic genus of K3, which counts supersymmetric (BPS) states in superconformal sigma models with a K3 surface target space. (See \cite{MR3539377} for the proof of the Mathieu moonshine conjecture.) Mathieu moonshine is the first example in a family of similar phenomena known as umbral moonshine \cite{cheng2014umbral,Cheng:2012tq,cheng2018weight,DGO15}, which feature connections between (among other things) finite groups and 
mock 
Jacobi forms. 

Naively, the right objects are at play here for an explanation of these connections similar in spirit to the one appearing in monstrous and generalized monstrous moonshine. 
However, ideas with as much power as those found in the ``classical moonshine'' of \S\S~\ref{sec:int-zer}--\ref{sec:int-inf} are still lacking. For example, one might hope that superconformal field theories based on K3 surfaces would play a role in the umbral theory analogous to the one played by the moonshine module $V^\natural$ in classical moonshine, but a 
theorem of Gaberdiel, Hohenegger and Volpato \cite{Gaberdiel:2011fg} shows that no point in the moduli space of K3 sigma models realizes the full Mathieu group $\textsl{M}_{24}$ as the group preserving $(4,4)$ superconformal symmetry of the theory. There has been some progress in extending the automorphism groups arising from K3 sigma models beyond those allowed by \cite{Gaberdiel:2011fg} by using the symmetry surfing approach of Taormina and Wendland \cite{Taormina:2011rr} for families of K3 sigma models, or by looking at groups preserving only $(4,1)$ superconformal symmetry \cite{Harvey:2020jvu}. 
There is also a kind of universal VOA for K3 surfaces coming from moonshine for the Conway group (see \cite{MR3465528} and cf.\ also \cite{fourtorus}) which comes close to solving the Mathieu moonshine module problem, 
and there are concrete module constructions for some cases of umbral moonshine (see \cite{MR3922534,MR3995918,MR3649360,MR3859972}),
but we are still lacking a full understanding of the emergence of $\textsl{M}_{24}$ and the other umbral groups.

More recently, an even more mysterious moonshine phenomenon,
which relates the 
sporadic simple group 
$\Th$ 
of Thompson 
\cite{MR399257,MR0399193} to a certain family of weight $\frac12$ modular forms,
\begin{align}\label{eqn:penumbralthMTseries}
    \breve{F}^{(-3,1)}_g(\tau) = \sum_{{D\geq -3}}\breve{C}^{(-3,1)}_g(D)q^D, \ \ \ \ \ g\in\Th,
\end{align}
was conjectured in \cite{Harvey:2015mca}.
This conjecture was proven in \cite{Griffin2016}, wherein the authors showed that the $\breve{F}^{(-3,1)}_g$ consistently form the McKay--Thompson series of a $\Th$-module\footnote{In \cite{Harvey:2015mca} the functions $\breve{F}_g^{(-3,1)}$ were labeled $\mathcal{F}_{3,[g]}$, and the module $\breve{W}^{(-3,1)}$ was simply labeled $W$.} 
\begin{align}\label{eqn:breveW31}
    \breve{W}^{(-3,1)} = \bigoplus_{D\geq-3} \breve{W}^{(-3,1)}_D
\end{align} 
in the sense that $\breve{C}_g^{(-3,1)}(D) = \tr(g|\breve{W}^{(-3,1)}_D)$ for all $g\in \Th$ and $D\geq -3$, though an explicit construction is still missing.
(See also \cite{khaqan2020elliptic} for work which illuminates Thompson moonshine phenomena in weight $\frac32$.) 

This Thompson moonshine 
(\ref{eqn:penumbralthMTseries}--\ref{eqn:breveW31})
plays a role similar to the one played by Mathieu moonshine in the broader umbral theory. Namely, it arises as the first example in a more general family called \emph{penumbral moonshine} \cite{pmo,pmz}, which relates finite groups to collections of so-called {skew-holomorphic Jacobi forms}. Umbral and penumbral moonshine are two halves of a whole. Indeed, skew-holomorphic Jacobi forms are natural partners of holomorphic Jacobi forms in that their theta-coefficients transform under conjugate representations of the metaplectic double cover of the modular group (or a congruence subgroup, more generally).
Moreover, just as the mock Jacobi forms of umbral moonshine can be organized by \emph{non-Fricke} groups of genus zero (see \cite{Cheng:2016klu}), the skew-holomorphic Jacobi forms which play a role in the penumbral theory enjoy a relationship (see \cite{pmo,pmz}) to the remaining \emph{Fricke} genus zero groups which did not participate in the umbral classification. 
(See \S~\ref{sec:pre-fuc} for the definition of a Fricke genus zero group.) 
An important objective is to find structures, like the moonshine module $V^\natural$ and the monster Lie algebra $\mathfrak{m}$, which ``explain'' Thompson and penumbral moonshine in the sense described above.

\subsection{A Point of Comparison}\label{sec:int-com}

Note now that the Thompson sporadic simple group is related to the centralizer of any element $h$ belonging to the conjugacy class of $\MM$ called 3C in \cite{atlas}, by 
\begin{align}\label{eqn:CMh}
C_\MM(h) \cong \ZZ/3\ZZ\times\Th.
\end{align}
Thus, according to our discussion in \S~\ref{sec:int-zer}, the group $\Th$ should play a role in generalized monstrous moonshine. 
Indeed, taking $h$ as in (\ref{eqn:CMh}), the space $V^\natural_{h}$ we called the $h$-twisted sector naturally admits an action by $\Th$, 
as this group admits no non-trivial central extensions (see e.g.\ \cite{atlas}).
Just looking at the graded dimension of $V^\natural_h$, which is given (after rescaling) by\footnote{The graded dimension of $V^\natural_{\rm 3C}$ is $T_{{\rm 3C},e}(\tau)=j(\frac\tau 3)^{\frac13}$, but to simplify our analysis, we use the rescaled series $\widetilde{T}_{{\rm 3C},g}(\tau) := T_{{\rm 3C},g}(3\tau)$ for the remainder of the paper.}  
\begin{align}\label{eqn:int:tildeT3Ce}
\widetilde{T}_{\mathrm{3C},e}(\tau):= T_{\mathrm{3C},e}(3\tau) = j(\tau)^{\frac13} = q^{-\frac13}(1+248q + 4124q^{2}+34752q^{3}
	+\dots),
\end{align}
we see 
counterparts 
\begin{gather}
\begin{split}\label{eqn:Th3Cobservations}
248 &= 248,\\ 
4124 &= 4123 + 1,\\
34752&=
30628
+
4123
+
1
,
\end{split}
\end{gather} 
to the observations (\ref{eqn:initialobservations}) of McKay and Thompson.
(See Tables \ref{character_table_th_1}--\ref{character_table_th_2} for the irreducible characters of $\Th$, and see \S~3.5 of \cite{Carnahan2012} for further discussion of the 3C case of generalized monstrous moonshine). 

At this point we mention that the twisted sector $V^\natural_h$ depends only on the conjugacy class of $h$, and similarly for the graded characters $T_{h,g}$ that it defines. So from now on we write $V^\natural_{\rm 3C}$ for $V^\natural_h$, and ${T}_{{\rm 3C},g}$ for ${T}_{h,g}$, \&c., when $h$ belongs to the class 3C as in (\ref{eqn:CMh}),
and this explains our choice of notation in (\ref{eqn:int:tildeT3Ce}).
We also mention that candidates for the graded characters ${T}_{{\rm 3C},g}$ for $g\in \Th$ (cf.\ (\ref{eqn:int-zer:Thg})) were first written down in \cite{queen}.

It is natural to ask now if there is any connection between the $\Th$-module $V^\natural_{\rm 3C}$ realized by 3C-generalized moonshine for the monster, and the 
penumbral $\Th$-module $\breve W^{(-3,1)}$ of (\ref{eqn:breveW31}). In fact, at the level of graded dimensions at least, there is a close connection, in that the 
weakly holomorphic modular form of weight $0$ (\ref{eqn:int:tildeT3Ce}),
that serves as the rescaled graded dimension
of $V^\natural_{\rm 3C}$,
can be
realized as an infinite product constructed from the Fourier coefficients of the weakly holomorphic modular form of weight $\frac12$,
\begin{align}\label{eqn:int-com:breveF-31}
\breve{F}^{(-3,1)}(\tau):=\breve{F}^{(-3,1)}_e(\tau)  = 2q^{-3}+248+2\cdot 27000 q^4-2\cdot 85995 q^5+\dots,
\end{align} 
that serves as the (signed) graded dimension function
of $\breve W^{(-3,1)}$.
(See \S~\ref{sec:pre-mod} for the definition of a weakly holomorphic modular form.) 
Specifically, it follows from the results of \cite{Borcherds1995} that we have a product formula
\begin{align}\label{eqn:int-com:infiniteproductT3C}
\widetilde{T}_{\mathrm{3C},e}(\tau)^2 = 
{q^{20}\left(\prod_{n>0} (1-q^n)^{\breve{C}^{(-3,1)}(n^2)}\right)}{\eta(\tau)^{-496}},
\end{align}
valid for $\Im(\tau)$ sufficiently large, where 
$\breve{C}^{(-3,1)}(D):=\breve{C}^{(-3,1)}_e(D)$ (cf.\ (\ref{eqn:penumbralthMTseries}), (\ref{eqn:int-com:breveF-31})) is the (signed) dimension of $\breve{W}^{(-3,1)}_D$, 
and $\eta(\tau):=q^{\frac{1}{24}}\prod_{n>0} (1-q^n)$ is the Dedekind eta function. 

We may wonder whether this relationship (\ref{eqn:int-com:infiniteproductT3C}) between $V^\natural_{\mathrm{3C}}$ and $\breve W^{(-3,1)}$ extends beyond their graded dimensions to a relationship as $\Th$-modules. In light of the established CFT and VOA interpretations of $V_{\mathrm{3C}}^\natural$, there is hope that it might be possible to leverage such a relationship so as to uncover algebraic and perhaps also physical aspects to penumbral Thompson moonshine. Thus we take the problem of elucidating the relationship between $V^\natural_{\mathrm{3C}}$ and $\breve W^{(-3,1)}$ as the main motivation for this paper.

To appreciate the approach we take observe that the infinite product formula \eqref{eqn:int-com:infiniteproductT3C} may be thought of as a Thompson-moonshine analog of the denominator formula \eqref{eqn:int-inf:Jsigma-Jtau} for the monster Lie algebra, since $\breve{F}^{(-3,1)}$ (\ref{eqn:int-com:breveF-31}) serves as a Thompson-moonshine analog of the graded dimension function (\ref{normalizedj}) of the moonshine module. 
This suggests, in analogy with the statement that $\mathfrak{m}$ is a second quantization of $V^\natural$, that 
$(V_{\mathrm{3C}}^\natural)^{\otimes 2}$
might be recovered by applying a suitable second-quantization functor to $\breve{W}^{(-3,1)}$. Now the definition of the second-quantization functor introduced in \cite{Borcherds1992}, which links $V^\natural$ to $\mathfrak{m}$, relies upon 
the 
VOA
structure of  the former. 
Without access to similar structure on $\breve{W}^{(-3,1)}$ at present, it is not clear yet how to carry this procedure out---at an algebraic level---in the context of penumbral Thompson moonshine. 
Nonetheless, we \emph{do} know the structures of 
$\breve{W}^{(-3,1)}$ 
and 
$(V^\natural_{\mathrm{3C}})^{\otimes 2}$ 
as graded modules for the Thompson group, and so we may ask if they are compatible or not. 
That is, we may ask if there are appropriate Thompson-moonshine analogs of 
the twined denominator formulae for the monster Lie algebra \eqref{eqn:int-inf:Tgsigma-Tgtau}, and if so, 
we may ask if they connect the functions $\breve{F}^{(-3,1)}_g$ (\ref{eqn:penumbralthMTseries}) to the functions $\widetilde{T}_{\mathrm{3C},g}^2$ (cf.\ (\ref{eqn:int:tildeT3Ce})), in analogy with (\ref{eqn:int-com:infiniteproductT3C}).

\subsection{A General Construction}\label{sec:int-con}

To explore the relationship between $\breve{W}^{(-3,1)}$ (\ref{eqn:breveW31}) and $V^\natural_{\rm 3C}$ (\ref{eqn:int:tildeT3Ce})
we
apply the general construction 
\begin{gather}\label{eqn:int-con:SQ}
W\mapsto \SQ(W) 
\end{gather} 
of \cite{pmp},
which translates graded modules for which the associated McKay--Thompson series are modular forms of weight $\frac12$, into graded modules whose McKay--Thompson series have weight $0$. 
This construction (\ref{eqn:int-con:SQ}) 
serves as a ``lift'' of the singular theta lift of 
\cite{Harvey:1995fq,Borcherds:1996uda} 
(in the $O_{2,1}$ setting) from modular forms to $G$-modules, for $G$ a finite group. It is analogous to the second-quantization functor of \cite{Borcherds1992} mentioned above, 
but operates at the level of (virtual) $G$-modules rather than Lie algebras. 

Applying the construction (\ref{eqn:int-con:SQ}) to the penumbral Thompson moonshine module $\breve{W}^{(-3,1)}$ we obtain 
a graded $\Th$-module $V^{(-3,1)}$
with the same graded dimension as $(V_{\rm 3C}^\natural)^{\otimes 2}$ (up to a rescaling of the latter), whose McKay--Thompson series $T^{(-3,1)}_g$ are weakly holomorphic modular forms of weight $0$
(see Theorem \ref{thm:res-ava:wz}).
We also obtain
(also in Theorem \ref{thm:res-ava:wz}) 
that these McKay--Thompson series of $V^{(-3,1)}$ admit infinite product expressions
\begin{align}\label{eqn:int-com:T31g}
T^{(-3,1)}_g(\tau) = q^{20}\exp\left(-\sum_{n>0}\sum_{k>0}\breve{C}^{(-3,1)}_{g^k}(n^2)\frac{q^{nk}}{k}     \right)\eta_g(\tau)^{-2} 
\end{align}
for $g\in \Th$ (cf.\ (\ref{eqn:int-com:infiniteproductT3C})), valid for $\Im(\tau)$ sufficiently large, where 
\begin{align}\label{eqn:int-com:etag}
\eta_g(\tau) := \prod_{b>0} \eta(b\tau)^{v_b(g)},
\end{align}
for $\pi(g)=\prod_{b>0}b^{v_b(g)}$ the Frame shape
defined by the action of $\Th$ on 
its unique irreducible representation of dimension 248.
(Cf.\ (\ref{eqn:Th3Cobservations}), and see Table \ref{tab:res-ava:pig} for the Frame shapes $\pi(g)$.)

Note that 
it is not a priori clear that the right-hand side of (\ref{eqn:int-com:T31g}) makes sense as a function on the upper half-plane.
We establish the convergence of this expression (for $\Im(\tau)$ sufficiently large) and also establish its modularity, 
by applying the results of \cite{pmp}. These results in turn depend upon the singular theta lift developed in \cite{Borcherds:1996uda,Harvey:1995fq}.

We note that, while Thompson moonshine is the main focus of this work, the construction (\ref{eqn:int-con:SQ}) is quite general, and may be applied, for example, to any of the modules of penumbral moonshine \cite{pmo}. 
See \S~\ref{sec:res-gen} for more discussion of this.

At this point we ask whether or not $V^{(-3,1)} \stackrel{?}{=} V_{\mathrm{3C}}^\natural\otimes V_{\mathrm{3C}}^\natural$ as $\Th$-modules. That is, we ask whether or not
\begin{align}\label{putativetwine}
T^{(-3,1)}_{g}\stackrel{?}{=} \widetilde{T}_{\mathrm{3C},g}^2 
\end{align}
for all $g\in \Th$. 
If true 
we would obtain
a Thompson-moonshine analog of the pair $(V^\natural,\mathfrak{m})$, with $\breve{W}^{(-3,1)}$ playing the role of the moonshine module CFT $V^\natural$, and $V^\natural_{\mathrm{3C}}\otimes V^\natural_{\mathrm{3C}}$ 
arising from the root spaces of a penumbral Thompson-moonshine analog of the monster Lie algebra $\mathfrak{m}$.

\subsection{The Avatars}\label{sec:int-ava}

It develops that \eqref{putativetwine} holds for 3-regular elements of the Thompson group (i.e.\ elements $g\in\Th$ such that $o(g)$ is coprime to $3$), but in general fails for 3-singular elements (see Theorem \ref{thm:res-ava:wz}). 
However, 
we prove (see Theorem \ref{thm:res-ava:wh}) that there is another graded $\Th$-module, 
\begin{align}\label{eqn:breveW3C}
    \breve{W}_{\mathrm{3C}} = \bigoplus_{{D\geq -3}} (\breve{W}_{\mathrm{3C}})_D,
\end{align} 
whose McKay--Thompson series 
\begin{align}\label{eqn:breveF3Cg}
\breve{F}_{\mathrm{3C},g}(\tau) 
= \sum_{{D\geq -3}} \breve{C}_{\mathrm{3C},g}(D) q^D
:= \sum_{{D\geq -3}} \tr(g|(\breve{W}_{\mathrm{3C}})_D)q^D
\end{align}
are weakly holomorphic modular forms of weight $\frac12$,
such that when we apply the construction (\ref{eqn:int-con:SQ}) to $\breve W_{\rm 3C}$ we recover the $\Th$-module $(V^\natural_{\rm 3C})^{\otimes 3}$.
Moreover, the associated McKay--Thompson series of $\breve W_{\rm 3C}$
admit infinite product formulae 
similar to (\ref{eqn:int-com:T31g}). 
We prove (see Theorem \ref{thm:res-ava:wh}) that these infinite products converge for $\Im(\tau)$ sufficiently large, and define weakly holomorphic modular forms of weight $0$. 
By virtue of this we obtain product formulae
\begin{align}\label{twined_th_product}
\widetilde{T}_{\mathrm{3C},g}(\tau) = q^{-\frac13} \exp\left( -\sum_{n>0}\sum_{k>0}\frac13\breve{C}_{\mathrm{3C},g^k}(n^2)\frac{q^{nk}}{k}    \right)
\end{align}
for all the functions $\widetilde{T}_{{\rm 3C},g}$ of ${\rm 3C}$-generalized monstrous moonshine. 

Note that we are distinguishing the McKay--Thompson series of $\breve{W}_{\mathrm{3C}}$ from those of $\breve{W}^{(-3,1)}$ by using the 
``lambdency''  superscript $(-3,1)$ of the latter to emphasize the connection to penumbral moonshine, and using 
the 3C subscript of the former 
to emphasize a relationship to the 3C-twisted sector $V_{\mathrm{3C}}^\natural$ of monstrous moonshine. 
Another remark is that we consider $\widetilde{T}_{\mathrm{3C},g}^3$ as opposed to $\widetilde{T}_{\mathrm{3C},g}$  or $\widetilde{T}_{\mathrm{3C},g}^2$ in \eqref{twined_th_product} because this is the smallest power of $\widetilde{T}_{\mathrm{3C},g}$ for which such a correspondence works.

\begin{figure}[ht]
\begin{center}
\begin{tikzpicture}
\tikzset{vertex/.style = {shape=rectangle,rounded corners,draw,minimum size=1.5em, text width = 12em,align=center,semithick}}
\tikzset{edge/.style = {->,> = latex',semithick}}
% vertices
\node[vertex] (1) at  (0,0) {Weight one-half \\ 3C-generalized \\ moonshine module, \\
$(\breve{W}_{\mathrm{3C}}$, $\breve{F}_{\mathrm{3C},g})$};
\node[vertex] (2) at  (7,0) {Weight zero \\ 3C-generalized \\ moonshine module, \\
 $((V_{\mathrm{3C}}^\natural)^{\otimes 3},\widetilde{T}_{\mathrm{3C},g}^3)$};
\node[vertex] (3) at  (0,-3.5) {Weight one-half \\ penumbral moonshine \\ module, $(\breve{W}^{(-3,1)},\breve{F}^{(-3,1)}_g)$};
\node[vertex] (4) at  (7,-3.5) {Weight zero \\ penumbral moonshine \\
module, $(V^{(-3,1)},T^{(-3,1)}_g)$};
%edges
\draw[edge] (1.008) to ["$\SQ$/BP"] (2.172);
\draw[edge] (2.190) to ["TSM",swap] (1.350);
\draw[edge] (3.008) to ["$\SQ$/BP"] (4.172);
\draw[edge] (4.188) to ["TSM",swap] (3.352);
\draw[edge] (1.270) to ["3-regular"] (3.090);
\draw[edge] (2.270) to (4.090);
\draw[edge] (3.090) to (1.270);
\draw[edge] (4.090) to ["3-regular",swap] (2.270);
\end{tikzpicture}
\end{center} 
\caption{The four avatars of Thompson moonshine. 
We can pass from the weight one-half avatars to the weight zero avatars using 
a construction ($\SQ$)
that produces Borcherds products (BP) for the McKay--Thompson series in weight zero. 
We conjecture that we can pass from weight zero back to weight one-half 
using traces of singular moduli (TSM). 
The avatars of the same weight have McKay--Thompson series which essentially agree on 3-regular elements of $\Th$.}\label{fouravatars}
\end{figure}
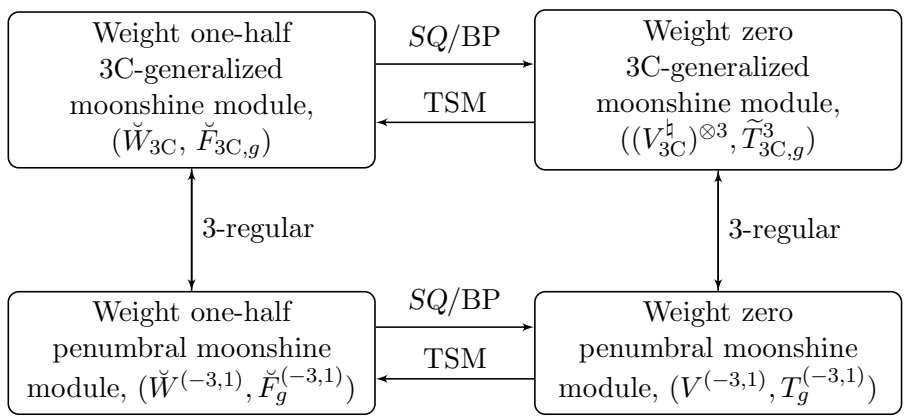

In light of its relationship to 
the 
penumbral Thompson module $\breve{W}^{(-3,1)}$, whose McKay--Thompson series have weight $\frac12$, we call 
$V^{(-3,1)}$, 
whose McKay--Thompson series have weight $0$, the \emph{weight zero penumbral Thompson moonshine module};  
it is the first of two new Thompson moonshine phenomena we consider in this paper. 
By a similar token, we call the new Thompson module $\breve{W}_{\mathrm{3C}}$ 
the \emph{weight one-half 3C-generalized monstrous moonshine module}, in light of its relationship (cf.~(\ref{eqn:breveW3C}--\ref{twined_th_product}))
to $V_{\rm 3C}^\natural$;
it is the second of the two new Thompson moonshine phenomena that we study. 
In total, we arrive at the four avatars of Thompson moonshine that are summarized in Figure \ref{fouravatars}. 

\subsection{An Arithmetic Aspect}

In \S~\ref{sec:int-con} we have described a passage from weight one-half to weight zero, which is facilitated by the machinery of Borcherds products. It is interesting to ask whether or not there is a prescription for going in the reverse direction. It turns out that there is beautiful mathematics which allows one to define the kinds of weight one-half forms that arise in 3C-generalized Thompson moonshine using weight zero modular functions. Indeed, in \S~\ref{sec:res-tra} we give conjectural descriptions for most of the Fourier coefficients of the McKay--Thompson series $\breve{F}_{\mathrm{3C},g}$ of $\breve{W}_{\mathrm{3C}}$ by summing the $\widetilde{T}_{\mathrm{3C},g}^3$ over CM points (i.e.\ quadratic algebraic integers) in the upper half-plane, a method we refer to as taking ``traces of singular moduli'' (TSM), following \cite{ZagierTSM}.

It is important to note that this TSM construction is more subtle than the passage 
from weight one-half to weight zero that is furnished by $\SQ$ (\ref{eqn:int-con:SQ}). 
As we have mentioned, the construction (\ref{eqn:int-con:SQ}) applies to general modules $W$ with McKay--Thompson series that are modular of weight $\frac12$, so as to produce corresponding modules $\SQ(W)$ with McKay--Thompson series that have weight $0$. 
By contrast, the TSM construction we describe in \S~\ref{sec:res-tra} is mostly conjectural, and the $\Th$-equivariance is far from obvious. 
For a broader perspective on this see \S~5 of \cite{pmp}, wherein we approach the relationship between $\SQ$ and TSM in a general way (see \S~5.1 of op.\ cit.), and then highlight the special features of this approach that manifest in the setting of moonshine (see \S~5.2 of op.\ cit.).

In particular, it is remarkable that $\breve W_{{\rm 3C}}$ exists.
To better appreciate this, note first that
the subspaces $(\breve{W}_{\mathrm{3C}})_{D}$ (see (\ref{eqn:breveW3C})) at perfect-square values of the grading $D$ are directly related to $V_{\mathrm{3C}}^\natural$
by virtue of the Borcherds product construction \eqref{twined_th_product}, and in particular, inherit their structure as $\Th$-modules from weight zero in an explicit way.
Now contrast this with the situation at square-free values of $D$, where the corresponding components \emph{also} conspire to be $\Th$-modules, despite being defined by highly non-trivial reorganizations of the information encoded in $V^\natural_{\mathrm{3C}}$. More specifically, the TSM machinery we develop demonstrates that the existence of $\breve W_{{\rm 3C}}$ depends not only on the fact that the Fourier coefficients of the functions $\widetilde{T}_{\mathrm{3C},g}^3$ are organized by the $\Th$ group, but also that their \emph{values} are similarly organized, when summed over special points in the upper half-plane. We therefore consider the incorporation of these square-free components as representing an additional layer of arithmetic richness to Thompson moonshine that is distilled by working in weight one-half, but hidden in weight zero.

We note that the discussion of \S~5 of \cite{pmp} provides evidence that the TSM construction we present here may be generalized so as to invert the application of $\SQ$ to the penumbral Thompson moonshine module $\breve W^{(-3,1)}$ (and in fact, invert its application to any of the modules of penumbral moonshine), and it is on the strength of this that we include an arrow from right to left in the bottom of Figure \ref{fouravatars}. However, we leave the details of this to future work.

\subsection{Two Questions}\label{sec:int-two}

Probably the most important question arising from this work is the elucidation of the algebraic structures and physical notions that underpin the avatars of Thompson moonshine represented in Figure \ref{fouravatars}. 
Certainly an important clue in this regard is the fact that 
the top and bottom rows of this figure essentially collapse---at the level of characters---when we work in characteristic $3$ (cf.\ (\ref{eqn:res-ava:3F31g=2F3Cgplustheta})). 
We also have the analogy between the left- and right-hand sides of Figure \ref{fouravatars}, and the relationship between the monster module $V^\natural$ and the monster Lie algebra $\mathfrak{m}$. 
Thus a compelling possibility is that the weight one-half modules on the left-hand side are two naturally defined lifts to characteristic zero of a single VOA-like object $\breve W$, with manifest $\Th$ symmetry, that is naturally defined in characteristic $3$, while the weight zero modules on the right-hand side are two corresponding lifts arising from a BKM-like counterpart to $\breve W$. 
In particular, we have the intriguing possibility that 
the McKay--Thompson series of $(V^\natural_{\mathrm{3C}})^{\otimes 3}$ have alternate lives as the twined denominators of a BKM (or BKM-like object) with Thompson symmetry. 
Can such a BKM-like structure be realized in a string theoretic setting, along the lines of that developed in \cite{Paquette:2016xoo,Paquette:2017xui}?

We emphasize that the main puzzle of Thompson moonshine, being the failure of (\ref{putativetwine}) for elements with order divisible by $3$, disappears when we pass to characteristic $3$. 
Thus a key point of this paper is that it prepares the way for a monstrous-moonshine-based approach to the module problem for penumbral Thompson moonshine, where the starting point is a realization of the 3C-twisted module for the moonshine vertex operator algebra in characteristic $3$. 
At the time of writing the module problem for umbral Mathieu moonshine remains unsolved. 
Given the analogy between umbral moonshine and penumbral moonshine, it is possible that follow-up work on Thompson moonshine, along the lines we have just sketched, may shed light on umbral moonshine modules too.

We conclude this introduction by highlighting another natural question: 
To what extent does the story we tell here generalize? 
More specifically, we may ask if there are counterparts to Figure \ref{fouravatars} where the lambdency $\pd=(-3,1)$ in the lower row is replaced by other lambdencies from penumbral moonshine \cite{pmo}, and where the conjugacy class 3C in the upper row is replaced by other conjugacy classes of the monster. In \S~\ref{sec:res-gen} 
we provide evidence that the answer to this question is positive, at least for some cases. Thus we anticipate, and lay foundations for, a more extensive interplay between penumbral and generalized monstrous moonshine. It is our hope that these results, building on prior work \cite{ORTL,Cheng:2016klu} on umbral moonshine 
(see \S\S~1.5 and 5.2 of \cite{pmp} for more discussion of this), may serve as a springboard for understanding the relationships between moonshine phenomena in weight one-half and weight zero more broadly. 

\subsection{Overview}\label{sec:int:struc}

The structure of this article is as follows. 
Following this introduction we give a guide to notation in \S~\ref{sec:notation}. 
Then in \S~\ref{sec:pre} we offer a review of relevant background material.
Specifically, we review the Fuchsian groups that arise in generalized monstrous moonshine in \S~\ref{sec:pre-fuc}, 
and we review 
the weight one-half modular forms that form the basis of penumbral moonshine \cite{Harvey:2015mca,pmo,pmz}, and also support the new weight one-half Thompson moonshine avatar $\breve W_{\rm 3C}$ (\ref{eqn:breveW3C}--\ref{twined_th_product}) that we introduce in this paper, in \S~\ref{sec:pre-mod}.
We review the necessary structures to discuss traces of singular moduli in \S~\ref{sec:pre-cmp}, and we complete the presentation of preliminary notions with a discussion of Rademacher sums in \S~\ref{sec:pre-rad}.

Our new results appear in \S~\ref{sec:res}.  
We start in \S~\ref{sec:res-pro} by recalling the defining features of the general construction $\SQ$ from \cite{pmp}.
Then in \S~\ref{sec:res-ava} we 
apply $\SQ$ to
the weight one-half penumbral Thompson module $\breve{W}^{(-3,1)}$, and thus arrive at its weight zero avatar $V^{(-3,1)}$.
We prove 
(see Theorem \ref{thm:res-ava:wz})
that the two weight zero avatars of moonshine for the Thompson group, $V^{(-3,1)}$ and $V^\natural_{\mathrm{3C}}$, have McKay--Thompson series which agree on all 3-regular elements of $\Th$.

We then move on to the weight one-half avatar 
of 3C-generalized Thompson moonshine, 
and prove (see Theorem \ref{thm:res-ava:wh}) the existence of a virtual graded $\Th$-module 
$W_{\rm 3C}$ 
that maps to $(V^\natural_{\mathrm{3C}})^{\otimes 3}$ under the construction $\SQ$.

In \S~\ref{sec:res-tra} we consider the problem of recovering the coefficients of the McKay--Thompson series of 3C-generalized monstrous moonshine in weight one-half as traces of singular moduli. 
Specifically, we 
give a conjectural description (see Conjecture \ref{con:res-tra}) for how our construction $\SQ$ 
can be inverted in this case---at the level of graded characters at least---so as to yield expressions for 
all of the coefficients of (most of) the weight $\frac12$ modular forms $\breve{F}_{\mathrm{3C},g}$ as sums of the weight zero principal moduli $\widetilde{T}_{\mathrm{3C},g}^3$ over CM points in the upper half-plane, in the spirit of work of Zagier \cite{ZagierTSM}. 

In \S~\ref{sec:res-gen}, being the final part of \S~\ref{sec:res}, we provide evidence that the relationship between 
weight one-half moonshine
and 
weight zero 
moonshine that we studied in the special case of the Thompson group generalize to correspondences between instances of penumbral moonshine and instances of generalized monstrous moonshine which are governed by (nearly) the same group (cf.\ Tables \ref{tab:penumbralgroupsD0m3}--\ref{tab:penumbralgroupsD0m4}). In particular, we demonstrate that the Borcherds products associated to the 
graded dimension functions of certain penumbral modules give rise to the weight zero graded dimension functions of corresponding generalized monstrous moonshine modules. 
We also offer a characterization (see Proposition \ref{pro:generalBorPrds}) of the Borcherds products associated to a family of vector-valued modular forms that 
are candidates for identity McKay--Thompson series in yet-to-be-realized instances of penumbral moonshine.

We conclude the main body of this work
by taking stock of our results,
and offering directions for future research,
in \S~\ref{sec:sum}.

Several appendices include additional data and information. 
In the first, \S~\ref{app:ser}, we provide specifications for the main examples of modular forms that appear in this work. 
We specify the McKay--Thompson series of both avatars of weight one-half Thompson moonshine in \S~\ref{app:ser-thm}, 
and specify the McKay--Thompson series of weight zero that arise from 3C-generalized monstrous moonshine in \S~\ref{app:ser-thz}.
These prescriptions play a role in the proofs of the theorems of \S~\ref{sec:res-ava}.
In \S~\ref{app:ser-gen}
we specify functions that supplement the proof of Proposition \ref{pro:generalBorPrds}.

The last three appendices are composed of numerical data. 
We reproduce the character table of the Thompson group 
in \S~\ref{app:tgd}, we print the low-lying coefficients of the McKay--Thompson series of all four of the avatars of $\Th$-moonshine that appear in this work in \S~\ref{app:cff}, 
and we display decompositions of the corresponding (virtual) $\Th$-modules into irreducible constituents in \S~\ref{app:dec}.

\section*{Acknowledgements}

We thank Nathan Benjamin, Miranda Cheng, Sarah Harrison, Shamit Kachru, Michael Mertens, Ken Ono, Natalie Paquette, Roberto Volpato, Max Zimet, and especially Scott Carnahan, for helpful communication and discussions, and we especially thank Simon Norton for sharing comments and computations pertaining to generalized monstrous moonshine.
We also thank anonymous referees for their comments and suggestions.
J.D.\ acknowledges support from the 
U.S.\ National Science Foundation (NSF) 
(DMS-1203162, DMS-1601306), 
the Simons Foundation (\#708354), and the Ministry of Science and Technology of Taiwan (111-2115-M-001-001-MY2).
J.H.\  acknowledges support from the NSF under grant PHY-1520748, and the hospitality of the Aspen Center for Physics supported by the NSF under grant PHY-1607611. 
B.R.\ acknowledges support from the 
NSF 
under grant PHY-1720397. 
This research was also supported in part by the NSF under grant PHY-1748958.

\section{Notation}\label{sec:notation}

\begin{footnotesize}

\begin{list}{}{
	\itemsep -1pt
	\labelwidth 23ex
	\leftmargin 13ex	
	}

\item
[$\cdot*\cdot$]
We set $n*n':=nn'/\gcd(n,n')^2$ for integers $n$ and $n'$, not both zero. 
See (\ref{eqn:estarf}).

\item
[$\left(\frac{\,\cdot\,}{\,\cdot\,}\right)$]
The Kronecker symbol. Cf.\ (\ref{eqn:pre-mod:thetatransform}).

\item
[${\bf 248}$]
A copy of the unique irreducible $\Th$-module of dimension $248$. 
Cf.\ (\ref{eqn:res-ava:3F31g=2F3Cgplustheta}).

\item
[$(-3,1)$]
The lambdency of penumbral Thompson moonshine.
See \S\S~\ref{sec:int-woh}--\ref{sec:int-ava} and cf.\ \S~\ref{sec:res-gen}.

\item[$a(n)$]
The map $n\mapsto a(n)$ defines an isomorphism $\Ex_m\to O_m$.
Cf.\ (\ref{eqn:Om}--\ref{eqn:Omegaalpham}).

\item
[$\alpha$]
A character of $O_m$. Cf.\ (\ref{eqn:Fartau}) and (\ref{eqn:Omegaalpham}).

\item[$\a_Q$]
A CM point in $\HH$. 
See \S~\ref{sec:pre-cmp}.

\item
[$c(n)$]
A Fourier coefficient of $J=T_e$. See (\ref{normalizedj}).

\item
[$c_g(n)$]
A Fourier coefficient of $T_g$. See (\ref{eqn:int-inf:Tgsigma-Tgtau}).

\item
[$\breve{C}_{{\rm 3C},g}(D)$]
A Fourier coefficient of $\breve{F}_{{\rm 3C},g}$. 
See (\ref{eqn:breveF3Cg}).

\item
[$\breve{C}^{(-3,1)}_{g}(D)$]
A Fourier coefficient of $\breve F^{(-3,1)}_g$. See (\ref{eqn:penumbralthMTseries}).

\item
[$C^W(D,r)$]
Shorthand for $C^W_e(D,r)$. 
Cf.\ (\ref{eqn:HW}).

\item
[$C^W_g(D,r)$]
A Fourier coefficient of $F^W_{g,r}$. 
See (\ref{eqn:SQ-BP:FWgr}) and (\ref{eqn:SQ-BP:CWgDr}).

\item
[$\ex(\,\cdot\,)$]
We set $\ex(x):=e^{2\pi i x}$.

\item
[$\Ex_m$]
The group of exact divisors of $m$. 
See (\ref{eqn:Exn}).

\item
[$\Ex_{n|h}$]
The $E\in \Ex_{nh}$ such that every prime divisor of $E$ also divides $n/h$.
See \S~\ref{sec:pre-fuc}.

\item
[$\eta_g$]
An eta product defined by the action of $g\in \Th$ on ${\bf 248}$.
See (\ref{eqn:int-com:etag}).

\item
[$\eta^W_g$]
An eta product defined by the action of $g\in G$ on $W_{0,0}$, for $W$ as in (\ref{eqn:SQ-BP:W}).
See (\ref{eqn:SQ-BP:etaWg}).

\item
[$f^V_g$]
A McKay--Thompson series associated to a graded $G$-module $V$ as in \S~\ref{sec:res-pro}.
See (\ref{eqn:SQ-BP:fVg}).

\item
[$F^W_g$]
A McKay--Thompson series associated to a graded $G$-module $W$ as in (\ref{eqn:SQ-BP:W}). 
Cf.\ (\ref{eqn:SQ-BP:FWgr}).

\item
[$F^W_{g,r}$]
A component function of $F^W_g$. See (\ref{eqn:SQ-BP:FWgr}).

\item
[$\check{F}^W_g$]
A repackaged McKay--Thompson series. See (\ref{eqn:FWgntocheckFWg}).

\item
[$F^{(\pd)}_g$] 
A McKay--Thompson series of the penumbral moonshine module $W^{(\pd)}$.
Cf.\ (\ref{eqn:breveFuplambdag}).

\item
[$\breve F^{(-3,1)}$]
The function $\breve F^{(\pd)}_g$ for $\pd=(-3,1)$ and $g=e$. 
See (\ref{eqn:int-com:breveF-31}) and cf.\ \S~\ref{sec:res-gen}.

\item
[$\breve F^{(-3,1)}_g$]
The function $\breve F^{(\pd)}_g$ for $\pd=(-3,1)$. 
See (\ref{eqn:penumbralthMTseries}).

\item
[$F_{{\rm 3C},g}$]
The McKay--Thompson series defined by the action of $g\in \Th$ on $W_{\rm 3C}$.
See Theorem \ref{thm:res-ava:wh}.

\item
[$\breve{F}_{{\rm 3C},g}$]
The McKay--Thompson series defined by the action of $g\in \Th$ on $\breve W_{\rm 3C}$.
See (\ref{eqn:breveF3Cg}).

\item
[$F_{h}$] 
A function closely related to $F^{(\pd)}=F^{(\pd)}_e$ for $h=h^{(\pd)}$. 
See \S~\ref{sec:res-gen}, especially (\ref{eqn:breveFhuplambda}).

\item
[$\breve{F}_{h}$]
The image of $F_{h}$ under the map (\ref{eqn:prebreveF}--\ref{eqn:breveF}).
See (\ref{eqn:breveFhuplambda}).

\item
[$(\gamma,u)$]
An element of $\widetilde{\SL}_2(\RR)$. 
Cf.\ (\ref{eqn:widetildeSL2R}--\ref{eqn:widetildeSL2R-mlt}).

\item
[$\overline{\Gamma}$]
The quotient $\Gamma/\{\pm\Id\}$ for a subgroup $\Gamma$ of $\SL_2(\RR)$ that contains $\{\pm\Id\}$.

\item[$\overline\Gamma_Q$]
The stabilizer in $\overline\Gamma$ of some binary quadratic form $Q$.
See \S~\ref{sec:pre-cmp}, especially (\ref{sec:pre-cmp:TrGammachiD0}).

\item
[$\widetilde{\Gamma}$]
The preimage of $\Gamma\subset\SL_2(\RR)$ under the natural map $\widetilde{\SL}_2(\RR)\to \SL_2(\RR)$.
See \S~\ref{sec:pre-fuc}.

\item
[$\widetilde{\Gamma}_0(N)$]
The preimage of $\Gamma_0(N)$ in $\widetilde{\SL}_2(\ZZ)$ under the natural map $\widetilde{\SL}_2(\ZZ)\to \SL_2(\ZZ)$. 
See \S~\ref{sec:pre-fuc}.

\item
[$\Gamma_0(N|h)$] 
A shorthand for $\Gamma_0(N|h)+S$ in the case that $S=\{1\}$.
See (\ref{eqn:Gamma0Nverth}).

\item
[$\Gamma_0(N)+S$] 
A shorthand for $\Gamma_0(N|1)+S$. 
Cf.\ (\ref{eqn:Gamma0NverthplusS}).

\item
[$\Gamma_0(N|h)+S$] 
A subgroup of $\SL_2(\RR)$ that normalizes $\Gamma_0(Nh)$. 
See (\ref{eqn:Gamma0NverthplusS}).

\item
[$H$]
A shorthand for $H^W$. 
Cf.\ 
(\ref{eqn:HW})
and 
(\ref{eqn:res-pro:PsiWg}).

\item
[$H^W$]
A generalized class number associated to $W$ as in (\ref{eqn:SQ-BP:W}). 
See (\ref{eqn:HW}).

\item
[$H^W_\ell(g)$]
A generalized class number associated to $W$ as in (\ref{eqn:SQ-BP:W}).
See (\ref{eqn:HWellg}).

\item
[$\ell$]
A cusp representative in \S~\ref{sec:res-ava}, and a lambency of penumbral moonshine in \S~\ref{sec:res-gen}.

\item
[$\pd$] A lambdency of penumbral moonshine. 
See \S~\ref{sec:res-gen}.

\item
[$m+n,n',\dots$]
A shorthand for $\Gamma_0(m)+S$ where $S=\{1,n,n',\dots\}$. 
Cf.\ (\ref{eqn:Nplusnnprime}) and (\ref{eqn:ell}).

\item
[$M^{\wh,+}_{\frac12,m}(N,\chi)$]
A certain space of weakly holomorphic modular forms of weight $\frac12$. 
See \S~\ref{sec:pre-mod}.

\item
[$M^{+}_{\frac12,m}(N,\chi)$]
The subspace of $M^{\wh,+}_{\frac12,m}(N,\chi)$ composed of forms that are holomorphic.
See \S~\ref{sec:pre-mod}.

\item
[$N_g$]
A positive integer attached to a group element $g\in G$, in the setup of \S~\ref{sec:res-pro}.
Cf.\ (\ref{eqn:SQ-BP:FWgnVwh}).

\item
[$N+n,n',\dots$]
A shorthand for $\Gamma_0(N)+S$ where $S=\{1,n,n',\dots\}$. 
See (\ref{eqn:Nplusnnprime}).

\item
[$N|h+n,n',\dots$]
A subgroup of index $h$ in $\Gamma_0(N|h)+S$, where $S=\{1,n,n',\dots\}$. 
See (\ref{eqn:Nverthplusnnprime}).

\item
[$O_m$] The multiplicative group composed of the $a\xmod 2m$ such that $a^2\equiv 1\xmod 4m$. 
See (\ref{eqn:Om}).

\item 
[$\Omega_m^\alpha$]
A projection operator on $\textsl{V}_{\frac12,m}^{\wh}(N,\chi)$. 
See \eqref{eqn:Omegaalpham}.

\item
[$p$]
Either $p=\ex(\sigma)$ for $\sigma\in \HH$, as in (\ref{eqn:int-inf:Jsigma-Jtau}), or $p$ denotes a prime.

\item
[$\pi(g)$]
A shorthand for $\pi(g|{\bf 248})$.
Cf.\ (\ref{eqn:int-com:etag}).

\item
[$\pi(g|U)$]
The Frame shape associated to the action of $g\in G$ on a virtual $G$-module $U$. 
Cf.\ (\ref{eqn:res-pro:pigW00}).

\item 
[$\Psi^W_g$]
The  Borcherds product defined by the action of $g\in G$ on $W$ as in (\ref{eqn:SQ-BP:W}). 
See \eqref{eqn:res-pro:PsiWg}.

\item
[$q$]
We set $q:=\ex(\tau)$ for $\tau\in \HH$. Cf.\ (\ref{normalizedj}).

\item[$\mathcal{Q}_{D}$]
A set of binary quadratic forms of discriminant $D$ with integer coefficients.
Cf.\ (\ref{eqn:pre-cmp:Qgammaxy}).

\item[$\mc{Q}^{(N)}_{D}$]
A particular set of binary quadratic forms with integer coefficients. 
Cf.\ (\ref{eqn:pre-cmp:Qgammaxy}).

\item
[$\varrho_m$]
A Weil representation. 
See \S~\ref{sec:pre-mod}.

\item
[$\SQ$]
Our  lift of the $O_{2,1}$-type singular theta lift.
See (\ref{eqn:int-con:SQ}) and (\ref{eqn:SQ-BP:SQW}).

\item
[$\mpt(\RR)$]
The metaplectic double cover of $\SL_2(\RR)$. 
See (\ref{eqn:widetildeSL2R}--\ref{eqn:widetildeSL2R-mlt}).

\item
[$\mpt(\ZZ)$]
The preimage of $\SL_2(\ZZ)$ under the natural projection $\mpt(\RR)\to \SL_2(\RR)$. 
See \S~\ref{sec:pre-mod}.

\item
[$T_{g}$]
A McKay--Thompson series of monstrous moonshine.
See (\ref{McKayThompsonSeries}) and cf.\ (\ref{eqn:int-zer:Thg}).

\item
[$T_{h,g}$]
A McKay--Thompson series of generalized monstrous moonshine.  
See (\ref{eqn:int-zer:Thg}).

\item
[$T_{{\rm 3C},g}$]
The function $T_{h,g}$ for any $h$ in the 3C class of $\MM$.
See \S~\ref{sec:int-com}.

\item
[$\widetilde{T}_{\mathrm{3C},g}$]
We define $\widetilde{T}_{\mathrm{3C},g}(\tau) := T_{\mathrm{3C},g}(3\tau)$.
See \S~\ref{sec:int-com}, especially (\ref{eqn:int:tildeT3Ce}).

\item
[$T^{(-3,1)}_g$]
A shorthand for $T^W_g$ in case $W=W^{(-3,1)}$. 
See (\ref{eqn:int-com:T31g}) and Theorem \ref{thm:res-ava:wz}.

\item
[$T^W_g$]
The McKay--Thompson series associated to $V^W=\SQ(W)$.
See (\ref{eqn:SQ-BP:TWg}).

\item 
[$\Tr_{D_0,\Gamma}(T,D,\chi)$]
A trace of singular moduli. See \eqref{sec:pre-cmp:TrGammachiD0}.

\item
[$\theta$]
The theta series $\theta(\tau) := \theta_{1,0}^0(\tau)=\sum_{n}q^{n^2}$. Cf.\ (\ref{eqn:pre-mod:thetatransform}).

\item
[$\theta_m^0$]
The 
vector-valued modular form $\theta_m^0:=(\theta_{m,r}^0)$. 
Cf.\ (\ref{eqn:pre-mod:thetamr0}--\ref{eqn:pre-mod:varrhomgammauthetam0}).

\item
[$\theta^{0}_{m,r}$]
An index $m$ Thetanullwert. 
See (\ref{eqn:pre-mod:thetamr0}).

\item
[$v_b(g)$]
A shorthand for $v_b(g|{\bf 248})$.
See (\ref{eqn:int-com:etag}).

\item
[$v_b(g|U)$]
Integers that define the Frame shape $\pi(g|U)$. 
Cf.\ (\ref{eqn:res-pro:pigW00}).

\item
[$V^W$]
See Theorem \ref{thm:SQ} and (\ref{eqn:SQ-BP:SQW}).

\item
[$\textsl{V}^{\wh}_{\frac12,m}$]
We abbreviate $\textsl{V}^{\wh}_{\frac12,m}=\textsl{V}^{\wh}_{\frac12,m}(1,1)$.
See \S~\ref{sec:pre-mod}.

\item
[$\textsl{V}^{\wh,\a}_{\frac12,m}$]
We abbreviate $\textsl{V}^{\wh,\a}_{\frac12,m}=\textsl{V}^{\wh,\a}_{\frac12,m}(1,1)$.
See \S~\ref{sec:pre-mod}.

\item
[$\textsl{V}^{\wh}_{\frac12,m}(N,\chi)$]
A certain space of weakly holomorphic vector-valued modular forms of weight $\frac12$. 
See \S~\ref{sec:pre-mod}.

\item
[$\textsl{V}^{\wh,\alpha}_{\frac12,m}(N,\chi)$]
The subspace of $\textsl{V}^{\wh}_{\frac12,m}(N,\chi)$ defined by a character $\alpha$ of $O_m$.
Cf.\ (\ref{eqn:Fartau}).

\item
[$V_h^\natural$]
The $h$-twisted sector of the moonshine module VOA. 
Cf.\ (\ref{eqn:int-zer:Thg}).

\item
[$V_{\rm 3C}^\natural$]
The 3C-twisted sector of the moonshine module VOA. 
See \S~\ref{sec:int-com}.

\item
[$V^{(-3,1)}$]
The weight zero penumbral Thompson moonshine module. 
See \S~\ref{sec:int-con} and (\ref{eqn:V-31}).

\item
[$W$]
A weakly holomorphic $G$-module of weight $\frac12$ and some index. 
See (\ref{eqn:SQ-BP:W}).

\item
[$\breve W$]
A virtual graded $G$-module obtained from $W$ as in (\ref{eqn:SQ-BP:W}).
See (\ref{eqn:WtobreveW}--\ref{eqn:breveWD}).

\item
[$\breve W_D$]
A homogeneous component of $\breve W$. See (\ref{eqn:breveWD}).

\item
[$W_{r,\frac{D}{4m}}$]
A homogeneous component of $W$. See (\ref{eqn:SQ-BP:W}).

\item
[$W^{(-3,1)}$]
The penumbral Thompson moonshine module.
Cf.\ (\ref{eqn:V-31}).

\item
[$\breve W^{(-3,1)}$]
A form of the penumbral Thompson moonshine module.
See (\ref{eqn:breveW31}).

\item
[$\breve W^{(-3,1)}_D$]
A homogeneous component of $\breve W^{(-3,1)}$. See (\ref{eqn:breveW31}).

\item
[$W_{\rm 3C}$]
The weight one-half 3C-generalized Thompson moonshine module. 
See Theorem \ref{thm:res-ava:wh}.

\item
[$\breve W_{\rm 3C}$]
A form of the weight one-half 3C-generalized Thompson moonshine module. 
See (\ref{eqn:breveW3C}).

\item
[$(\breve W_{\rm 3C})_D$]
A homogeneous component of $\breve W_{\rm 3C}$. 
See (\ref{eqn:breveW3C}).

\item 
[$w_n$]
An Atkin--Lehner coset of $\Gamma_0(N|h)$ for some $N$ and $h$.
See (\ref{eqn:we}-\ref{eqn:Gamma0Nverth}).

\item
[$W_n$]
An Atkin--Lehner coset of $\Gamma_0(N)$ for some $N$.
Cf.\ (\ref{eqn:we}-\ref{eqn:Gamma0Nverth}) and (\ref{eqn:pre-fuc:WEsubsetwn}).

\item
[$\chi$]
A character of $\Gamma_0(N)$ for some $N$, or a (generalized) genus character. See \S\S~\ref{sec:pre-mod}--\ref{sec:pre-cmp}.

\item
[$\chi_{D_0}$]
A genus character. See \eqref{eqn:pre-cmp:chiD0N}.

\item
[$\widetilde{\chi}_{-3}$]
A generalized genus character. See \eqref{eqn:pre-cmp:widetildechiD03}.

\end{list}

\end{footnotesize}

\section{Preliminaries}\label{sec:pre}

In this section we review some preliminary notions which will be useful in the remainder of the paper. 
In \S~\ref{sec:pre-fuc} we define some Fuchsian subgroups of $\SL_2(\RR)$ that are ubiquitous in investigations of moonshine. 
In \S~\ref{sec:pre-mod} we review the main kinds of modular forms which appear in this work.
We review the basic structures underlying traces of singular moduli in \S~\ref{sec:pre-cmp}, and finally present some Rademacher sum expressions in \S~\ref{sec:pre-rad}.

\subsection{Fuchsian Groups}\label{sec:pre-fuc}

Let $\mathbb{H} := \{\tau \in\mathbb{C}\mid \Im(\tau)>0\}$ denote the upper half-plane, equipped with its usual hyperbolic metric.
The group
$\SL_2(\RR)$ 
acts on $\mathbb{H}$ by M\"obius transformations, $\tau\xmapsto{\gamma}\frac{a\tau+b}{c\tau+d}$, and the orientation preserving isometry group of $\mathbb{H}$ is obtained as the image of this action, 
\begin{align}
\mathrm{Isom}^+(\mathbb{H}) = \PSL_2(\RR) = \SL_2(\RR)/\{\pm \Id \}.
\end{align}

Throughout this paper we will work with Fuchsian groups, i.e.\ with discrete subgroups of $\SL_2(\RR)$. 
We will further restrict attention to discrete subgroups $\Gamma$ of $\SL_2(\RR)$ that are commensurable with $\SL_2(\ZZ)$ in the sense that $\Gamma\cap \SL_2(\ZZ)$ has finite index in both $\Gamma$ and $\SL_2(\ZZ)$. 
Of particular interest are the congruence subgroups 
\begin{align}
\begin{split}\label{eqn:Gamma0nGamma1n}
\Gamma_0(N) &:= \left.\left\{\left(\begin{matrix} a & b \\ c & d\end{matrix}\right)\in \SL_2(\ZZ) \,\right|\,  c \equiv 0  \xmod N\right\}, \\
\Gamma_1(N) &:= \left.\left\{\left(\begin{matrix} a & b \\ c & d\end{matrix}\right)\in \SL_2(\ZZ) \,\right|\,  a,d\equiv 1 \xmod N, \  c \equiv 0  \xmod N\right\},
\end{split}
\end{align}
as well as their generalizations to ``$n\vert h$-type groups'' which appear in monstrous and generalized monstrous moonshine. 
Next we describe these generalizations, closely following the conventions of \cite{ferenbaugh1993}
(except that we will use $N$ where op.\ cit.\ uses $n$, and use $n,n',\dots$ where op.\ cit.\ uses $e,e',\dots$ and $e_1,e_2,\dots$).

Denote the set of exact divisors of a positive integer $N$ by 
\begin{gather}\label{eqn:Exn}
\Ex_N := \left\{ n>0  ~\big\vert~  n\vert N,\,\gcd(n,N/n)=1   \right\},
\end{gather} 
and note that this set is naturally endowed with the structure of a finite group with multiplication rule 
\begin{gather}\label{eqn:estarf}
n\ast n' := \frac{nn'}{\gcd(n,n')^2}.
\end{gather} 
Thus every non-identity element has order 2. 
Now choose $h>0$ such that $h$ divides $\gcd(N,24)$, 
and to each exact divisor $n\in\Ex_{N/h}$ attach a set of matrices 
\begin{align}\label{eqn:we}
w_n := \left.\left\{\frac{1}{\sqrt{n}}\left(\begin{matrix}an & b/h \\ cN & dn \end{matrix}\right)\,\right|\,  a,b,c,d\in \ZZ, \ adn^2-bcN/h=n       \right\}.
\end{align}
Then these $w_n$ satisfy the same multiplication rule as do exact divisors of $N/h$, 
i.e., we have 
$w_n w_{n'} = w_{n\ast n'}$ for $n,n'\in \Ex_{N/h}$,
and we also have $w_n\Gamma_0(Nh)=\Gamma_0(Nh)w_n$.
Thus to any subgroup 
$S$ of $\Ex_{N/h}$ we may associate a corresponding subgroup
\begin{gather}\label{eqn:Gamma0NverthplusS}
	\Gamma_0(N|h)+
	S
	:=
	\bigcup_{n\in S}w_n
\end{gather}
of $\SL_2(\RR)$, which normalizes $\Gamma_0(Nh)$, and is commensurable with $\SL_2(\ZZ)$ in the sense just described.
It develops that $\Gamma_0(N|h)+\Ex_{N/h}$ is the full normalizer of $\Gamma_0(Nh)$ in $\SL_2(\RR)$ when $h$ is the largest divisor of $24$ such that $h$ divides $N$ (see \cite{Conway:1979qga}).

The $w_n$ in (\ref{eqn:we}--\ref{eqn:Gamma0NverthplusS}) are referred to as the Atkin--Lehner cosets of 
\begin{gather}\label{eqn:Gamma0Nverth}
\Gamma_0(N|h):=w_1=
\left.\left\{\left(\begin{matrix}a & b/h \\ cN & d \end{matrix}\right)\,\right|\,  a,b,c,d\in \ZZ, \ ad-bcN/h=1       \right\},
\end{gather}
and any element of $w_n$ is called an Atkin--Lehner involution of $\Gamma_0(N|h)$ (even though such elements generally do not have order $2$ with respect to matrix multiplication).
We have $\Gamma_0(N|1)=\Gamma_0(N)$, so we drop the symbols $|1$ from notation in (\ref{eqn:Gamma0NverthplusS}--\ref{eqn:Gamma0Nverth}) when $h=1$.
We follow \cite{ferenbaugh1993} in writing $W_n$ for $w_n$ when $h=1$.

In Tables \ref{tab:mts-T3C}--\ref{tab:bpd:bpp} we use symbols of the form
$N|h+n,n',\dots$,
for $\{1,n,n',\dots\}$ a subgroup of $\Ex_{N/h}$, to specify subgroups of $\SL_2(\RR)$. 
When $h=1$ we simplify this notation to
\begin{gather}\label{eqn:Nplusnnprime}
N+n,n',\dots,
\end{gather} 
and it then just serves as a shorthand for the group $\Gamma_0(N)+S$ as in (\ref{eqn:Gamma0NverthplusS}), for $S=\{1,n,n',\dots\}$. 
When $h>1$ the definition is more subtle. 
To explain this we suppose that $N$ and $h$ are as in (\ref{eqn:we}), with $h>1$,
and for each exact divisor $E$ of $Nh$ write $h_E$ for the largest divisor of $h$ such that $h_E^2|E$. 
Then $E/h_E^2$ is an exact divisor of $N/h$, and the map $E\mapsto E/h_E^2$ defines a surjective homomorphism $\Ex_{Nh}\to \Ex_{N/h}$. 
Moreover, letting $W_E$ denote the Atkin--Lehner coset of $\Gamma_0(Nh)=\Gamma_0(Nh|1)$ defined by $E\in \Ex_{Nh}$, 
we have $W_E\subset w_n$ as subsets of $\SL_2(\RR)$, for $n=E/h_E^2$, since
\begin{align}\label{eqn:pre-fuc:WEsubsetwn}
\frac{1}{\sqrt{E}}\left(\begin{matrix} aE & b \\ cNh & dE \end{matrix}\right) = \frac{1}{\sqrt{n}}\left(\begin{matrix}ah_E n & \frac{bh/h_E}{h} \\ \frac{ch}{h_E}N & dh_E n     \end{matrix}\right) 
\end{align}
for $a,b,c,d\in\ZZ$.
Thus it is that each Atkin--Lehner coset of $\Gamma_0(Nh)$ naturally determines an Atkin--Lehner coset of $\Gamma_0(N\vert h)$. 

As explained in \S~1.3 of \cite{ferenbaugh1993}, the map $\Ex_{Nh}\to \Ex_{N/h}$ just described is generally not injective, but does become so when restricted to the subgroup 
composed of the exact divisors $E$ of $Nh$ with the property that if $p$ is a prime dividing $E$ then $p$ also divides $N/h$.
Here we denote this subgroup by $\Ex_{N|h}$. 

We now let $S$ be a subgroup of $\Ex_{N/h}$ and consider 
a homomorphism 
$\lambda:\Gamma_0(N\vert h) +S \to \CC^*$ that satisfies
\begin{align}\label{lambdahom}
\lambda(\gamma) = 
	\begin{cases}
		1 
		& 
		\text{if $\gamma\in W_E$ for $E\in \Ex_{N|h}$},\\
		\ex(-\frac{1}{h}) 
		& 
		\text{if }\gamma \in \Gamma_0(Nh)\left(\begin{smallmatrix} 1 & 1/h \\ 0 & 1 \end{smallmatrix}\right),\\
		\ex(\pm \frac{1}{h}) 
		& 
		\text{if }\gamma\in \Gamma_0(Nh)\left(\begin{smallmatrix} 1 & 0 \\ N & 1 \end{smallmatrix}\right), 
		 \end{cases}
\end{align}
where the sign in the third case of (\ref{lambdahom}) is $+$ if 
$N/h$ is in $S$, and $-$ if not.
The group $\Gamma_0(N|h)$ is generated by $\Gamma_0(Nh)$ together with the matrices 
$\left(\begin{smallmatrix} 1 & 1/h \\ 0 & 1 \end{smallmatrix}\right)$ and $\left(\begin{smallmatrix} 1 & 0 \\ N & 1 \end{smallmatrix}\right)$ according to \cite{MR1073672}, 
so a homomorphism 
$\lambda$ as in (\ref{lambdahom}) 
is unique if it exists. 

Necessary and sufficient conditions on $N$, $h$ and $S$ for the existence of 
$\lambda$ as in (\ref{lambdahom}) 
are given in Theorem 2.8 of \cite{ferenbaugh1993}. 
In the case that these conditions hold, and $S=\{1,n,n',\dots\}$, we take
\begin{gather}\label{eqn:Nverthplusnnprime}
	N|h+n,n',\dots,
\end{gather}
to mean the kernel of this homomorphism $\lambda$.
The groups (\ref{eqn:Nverthplusnnprime}) so defined are referred to as $n|h$-type groups, 
and we adopt the convention of writing $N|h+{}$ as a shorthand for (\ref{eqn:Nverthplusnnprime}) in case $S=\Ex_{N/h}$, and also write $N|h$ in the case that $S=\{1\}$.
An $n\vert h$-type group (\ref{eqn:Nverthplusnnprime}) is called Fricke if it contains the element 
\begin{gather}\label{eqn:Fricke}
\sqrt{\frac{h}{N}}\left(\begin{matrix}0 & -{1}/{h} \\ N & 0 \end{matrix}\right),
\end{gather} 
and non-Fricke otherwise.

We will find it useful in some places to work with metaplectic double covers of Fuchsian groups. The reason is that the weight $\frac12$ modular forms we consider, while transforming only projectively with respect to congruence subgroups $\Gamma_0(N)$, transform with respect to honest representations of their metaplectic double covers. 
To describe these covers, recall that $\SL_2(\RR)$ admits a $2$-fold central extension
\begin{align}\label{eqn:widetildeSL2R}
\widetilde{\SL}_2(\RR) := \left.\left\{ (\gamma,u)\,\right|\, \gamma\in \SL_2(\RR),\, u(\tau)^4{\rm d}(\gamma\tau)={\rm d}\tau     \right\},
\end{align}
where the $u$ in each pair $(\gamma,u)$ in (\ref{eqn:widetildeSL2R}) is a holomorphic function on $\HH$ (and is therefore a square root of $u(\tau)^2=c\tau+d$ when $(c,d)$ is the lower row of $\gamma$), and where the multiplication rule is
\begin{align}\label{eqn:widetildeSL2R-mlt}
(\gamma_1,u_1)(\gamma_2,u_2) = (\gamma_1\gamma_2,(u_1\circ\gamma_2)u_2).
\end{align}
We call this group (\ref{eqn:widetildeSL2R}--\ref{eqn:widetildeSL2R-mlt}) the metaplectic double cover of $\SL_2(\RR)$. 

For any Fuchsian group $\Gamma<\SL_2(\RR)$, we reserve the notation $\widetilde{\Gamma}$ for the inverse image of $\Gamma$ under the natural map $\widetilde{\SL}_2(\RR)\to\SL_2(\RR)$, and abuse this notation slightly so as to write $\widetilde{\Gamma}_0(N)$ and $\widetilde{\Gamma}_1(N)$ for the preimages of $\Gamma_0(N)$ and $\Gamma_1(N)$, respectively. We also write $\widetilde{\SL}_2(\ZZ)$ for the preimage of the modular group $\SL_2(\ZZ)=\Gamma_0(1)=\Gamma_1(1)$.

The Fuchsian groups which appear in generalized monstrous moonshine are distinguished by the fact that they are genus zero, in the following sense. Assume $\Gamma$ is commensurable with $\SL_2(\ZZ)$ (as described at the beginning of this section). Then the action of $\Gamma$ on $\HH$ extends naturally to $\QQ\cup\{i\infty\}$ (see e.g.\ Proposition 2.14 of \cite{Dun_ArthGrpsAffE8Dyn}),
and there is a naturally defined Riemann surface structure on
\begin{gather}\label{eqn:XGamma}
X_\Gamma:=\Gamma\backslash \HH\cup\QQ \cup \{i\infty\}
\end{gather}
(see e.g.\ \S~1.5 of \cite{ShiATAF}).
In general, a Fuchsian group $\Gamma$ is said to be genus zero if $X_\Gamma$ is isomorphic to the Riemann sphere. The genus zero $n\vert h$-type groups are classified in \cite{ferenbaugh1993}. We refer interested readers to op.\ cit.\ for a more thorough treatment.

\subsection{Modular Forms}\label{sec:pre-mod}

In this section we review the three main kinds of modular forms that appear in this work. Firstly we recall that 
a weakly holomorphic modular form of weight $0$ for a discrete group $\Gamma<\SL_2(\RR)$ is 
a $\Gamma$-invariant 
holomorphic function 
on $\HH$ 
that has at most exponential growth near the cusps of $\Gamma$. 
If $\Gamma$ is commensurable with $\SL_2(\ZZ)$, 
this may be formulated as the condition that
there exists a $C>0$ such that $f(\gamma\tau)=O(e^{C\Im(\tau)})$ as $\Im(\tau)\to \infty$, for all $\gamma\in \SL_2(\ZZ)$.
A weakly holomorphic modular form of weight $0$ is called a principal modulus, or Hauptmodul, if the 
induced map $\Gamma\backslash\HH\to \CC$ extends to an isomorphism 
$X_\Gamma\to \CC\cup\{i\infty\}$ (cf.\ (\ref{eqn:XGamma})) of Riemann surfaces. In particular, such a function exists if and only if $\Gamma$ is genus zero.

Secondly we define some spaces of modular forms that satisfy Kohnen's plus-space condition. 
For this let $D$ be congruent to $0$ or $1$ modulo $4$, and for $n$ coprime to $D$ define the Kronecker symbol $(\frac{D}{n})$ by requiring that 
$n\mapsto (\frac{D}{n})$ is completely multiplicative, 
and that $(\frac{D}{n})$ evaluates to $1$ or $-1$ when $n=p$ is prime, according as $D$ is or is not a square modulo $4p$, 
and by taking $(\frac{D}{-1})$ to be the sign of $D$.
Also, for $d$ odd define $\epsilon_d$ to be $1$ or $i$
depending on whether $d$ is $1$ or $3$ modulo $4$.
Then
$\theta(\tau) := 
\sum_{n}q^{n^2}$ 
satisfies the transformation 
rule 
\begin{align}\label{eqn:pre-mod:thetatransform}
\epsilon_d\left(\frac{c}{d_{}}\right)\theta\left(\frac{a\tau+b}{c\tau+d}\right)
\frac1{\sqrt{c\tau+d}}
= 
\theta(\tau),
\end{align}
for $\left(\begin{smallmatrix}a&b\\c&d\end{smallmatrix}\right)\in \Gamma_0(4)$,
where $\left(\frac{c}{d}\right)$ is the Kronecker symbol just defined, and 
$\sqrt{\cdot}$ is the branch of the square root function determined by requiring that $\sqrt{e^{2\pi it}}=e^{\pi i t}$ for $-\frac12 < t\leq \frac12$.

Now let $\chi:\Gamma_0(4mN) \to \CC^\ast$ be a character of $\Gamma_0(4mN)$ for some positive integers $m$ and $N$. 
We use $M^{\wh,+}_{\frac12,m}(N,\chi)$ to denote the space of 
holomorphic functions $f:\HH\to \CC$ 
which satisfy
\begin{align}
\chi(\gamma)\frac{f(\gamma\tau)}{f(\tau)} = \frac{\theta(\gamma\tau)}{\theta(\tau)} 
\end{align}
for $\gamma\in\Gamma_0(4mN)$, and whose Fourier developments 
$f(\tau) = \sum_{D}c(D)q^D$
are such that $c(D) = 0$ unless $D$ is a square modulo $4m$. 
We also require that $u(\tau)^{-1}f(\gamma\tau)=O(e^{C\Im(\tau)})$ as $\Im(\tau)\to \infty$, for some $C>0$, for every $(\gamma,u)\in \widetilde{\SL}_2(\ZZ)$. 
The  condition that $c(D)=0$ unless $D$ is a square modulo $4$
is referred to as the \emph{Kohnen plus-space condition} \cite{MR575942,MR660784}, which is why we have placed a ``plus'' in the superscript of $M_{\frac12,m}^{\wh,+}(N,\chi)$.
We write $M^{+}_{\frac12,m}(N,\chi)$ for the subspace of $M^{\wh,+}_{\frac12,m}(N,\chi)$ composed of forms $f$ such that
$u(\tau)^{-1}f(\gamma\tau)$ remains bounded as $\Im(\tau)\to \infty$, for every $(\gamma,u)\in \widetilde{\SL}_2(\ZZ)$, and call these forms holomorphic.

The Thompson moonshine of \cite{Harvey:2015mca} was originally formulated in terms of weight $\frac12$ modular forms satisfying Kohnen's plus-space condition, but it was emphasized in \cite{pmo} that the theory can alternatively be expressed in terms of skew-holomorphic Jacobi forms, or equivalently, in terms of vector-valued modular forms transforming with respect to a particular Weil representation of $\widetilde{\SL}_2(\ZZ)$. 
This motivates the third kind of modular form we consider in this work, being that which transforms like the vector-valued theta series $\theta_m^0=(\theta_{m,r}^0)$, for some positive integer $m$, where the component functions
\begin{gather}\label{eqn:pre-mod:thetamr0}
	\theta_{m,r}^0(\tau):=\sum_{s\equiv r\xmod 2m} q^{\frac{s^2}{4m}},
\end{gather}
indexed by $r\xmod 2m$,
are the Thetanullwerte. 
To put this precisely we use the $\theta_{m,r}^0$ to define a $2m$-dimensional representation $\varrho_m$ of $\widetilde{\SL}_2(\ZZ)$ by requiring that
\begin{gather}\label{eqn:pre-mod:varrhomgammauthetam0}
	u(\tau)^{-1}\varrho_m(\gamma,u)\theta_m^0(\gamma\tau)
	=\theta^0_m(\tau)
\end{gather}
for every $(\gamma,u)\in \widetilde{\SL}_2(\ZZ)$.
Next we fix positive integers $m$ and $N$ and suppose that $\chi:\Gamma_0(N) \to \CC^\ast$ is a character of $\Gamma_0(N)$.
Then we write $\textsl{V}^{\wh}_{\frac12,m}(N,\chi)$ for the space of $2m$-vector-valued holomorphic functions $F=(F_r)$ on $\HH$ that satisfy
\begin{gather}\label{eqn:pre-mod:varrhomgammauF}
	u(\tau)^{-1}\varrho_m(\gamma,u)F(\gamma\tau)
	=\chi(\gamma)F(\tau)
\end{gather}
for all $(\gamma,u)\in \widetilde{\Gamma}_0(N)$, and are also such that the left-hand side of (\ref{eqn:pre-mod:varrhomgammauF}) is $O(e^{C\Im(\tau)})$ as $\Im(\tau)\to \infty$, 
for some $C>0$, when we take $(\gamma,u)$ to be an arbitrary element in $\widetilde{\SL}_2(\ZZ)$.
We abbreviate $\textsl{V}^{\wh}_{\frac12,m}=\textsl{V}^{\wh}_{\frac12,m}(1,1)$, and note that 
the component $F_r$ of a function $F=\big(F_r\big)$ in $\textsl{V}^{\wh}_{\frac12,m}(N,\chi)$ 
admits a Fourier expansion of the form 
\begin{align}\label{eqn:pre-mod:Frtau}
F_r(\tau) = \sum_{{D\equiv r^2\xmod 4m}} C_F(D,r) q^{\frac{D}{4m}},
\end{align}
where $C_F(D,r)=0$ for $-D$ sufficiently large.

The vector-valued modular forms in $\textsl{V}^\wh_{\frac12,m}(N,\chi)$ are closely related to Kohnen plus-space modular forms. 
To explicate this define a map $\iota_k$ from $\Gamma_0(kN)$ to $\Gamma_0(N)$
by setting
\begin{align}\label{eqn:iotak}
\iota_k\left(\begin{matrix} a & b \\ c & d\end{matrix}\right) 
:=\left(\begin{matrix} a & kb \\ c/k & d \end{matrix}\right).
\end{align}
Then 
we obtain a linear map 
\begin{gather}\label{eqn:prebreveF}
\textsl{V}^{\wh}_{\frac12,m}(N,\chi)\to M^{\wh,+}_{\frac12,m}(N,\chi\circ\iota_{4m}),
\end{gather} 
denoted $F\mapsto \breve{F}$, 
by setting
\begin{align}\label{eqn:breveF}
\breve{F}(\tau) := \sum_{r\xmod 2m} F_r(4m\tau).
\end{align}
Moreover, if $m$ is not composite then this map (\ref{eqn:prebreveF}--\ref{eqn:breveF}) is an isomorphism. 
(The $m=N=1$ case of this statement is contained in the results of \cite{eichler1985theory}. 
See also Example 2.4 in \cite{Borcherds:1996uda}.)

It will be useful to have at hand certain refinements of the spaces $\textsl{V}^{\wh}_{\frac12,m}(N,\chi)$. 
For this set 
\begin{align}\label{eqn:Om}
    O_m := \{a\xmod 2m \mid a^2\equiv 1\xmod 4m\},
\end{align} 
regard $O_m$ as a group under multiplication modulo $2m$, and let $\alpha:O_m\to\mathbb{C}^\ast$ be a character. 
We define $\textsl{V}^{\wh,\alpha}_{\frac12,m}(N,\chi)$ to be the subspace of $\textsl{V}^{\wh}_{\frac12,m}(N,\chi)$ composed of those functions $F=(F_r)$ such that 
\begin{align}\label{eqn:Fartau}
    F_{ar} = \alpha(a)F_r
\end{align}
for each $r\xmod 2m$ and each $a\in O_m$. 
We abbreviate $\textsl{V}^{\wh,\a}_{\frac12,m}=\textsl{V}^{\wh,\a}_{\frac12,m}(1,1)$.

We will construct functions in the spaces $\textsl{V}^{\wh,\alpha}_{\frac12,m}(N,\chi)$ by projection. 
To put this precisely note that the space of matrices that commute with $\varrho_m$ (cf.\ (\ref{eqn:pre-mod:varrhomgammauthetam0})) admits as a basis the matrices $\{\Omega_m(n)\}_{n\vert m}$ whose entries are 
\begin{align}\label{eqn:omegamatrices}
\Omega_m(n)_{r,r'} := \begin{cases}
1 & \text{if }r\equiv-r' \xmod 2n \text{ and }r\equiv r'  \xmod 2m/n, \\
0 & \text{otherwise}
\end{cases}
\end{align}
(see \cite{Gepner:1986hr}).
For $\a$ a character of $O_m$ we define the projection matrix $\Omega^\alpha_m$ by setting
\begin{align}\label{eqn:Omegaalpham}
\Omega^\alpha_m := \frac{1}{| \Ex_m|}\sum_{n\in\Ex_m}\alpha(a(n))\Omega_m(n),
\end{align}
where in each summand $a(n)$ is the unique $a\xmod 2m$ such that $a\equiv -1\xmod 2n$ and $a\equiv 1\xmod \frac{2m}{n}$. 
Then multiplication by $\Omega_m^\a$ defines a projection $\textsl{V}^{\wh}_{\frac12,m}(N,\chi)\to\textsl{V}^{\wh,\a}_{\frac12,m}(N,\chi)$.
(See \cite{Cheng:2016klu} for a related discussion in the context of optimal mock Jacobi forms, and \cite{pmo} for a discussion in the context of skew-holomorphic Jacobi forms.)

We conclude this section by commenting that a modular form in $M^{\wh,+}_{\frac12,1}(N,\chi)$ is uniquely determined by the singular terms in its $q$-expansion, up to a form in the space $M^{+}_{\frac12,1}(N,\chi)$. 
Assuming that $\ker\chi = \Gamma_0(Nh)$ for some $h$, as will be the case for all characters $\chi$ in this paper, we can use the fact that $M^+_{\frac12,1}(N,\chi)\subset M^+_{\frac12,1}(Nh,1)$ to express any such ambiguity as a linear combination of the theta series $\{\theta(k^2\tau)\}_{k^2\vert Nh}$, according to the main result of \cite{serrestark}. (There is an analogous statement for the space $M^{\wh,+}_{\frac12,m}(N,\chi)$, but we will not need it in what follows.) 
Therefore, when convenient we may specify a modular form in $M^{\wh,+}_{\frac12,1}(N,\chi)$ by the data of its poles, as well as some low order terms in its $q$-expansion (so as to fix the contributions from theta series $\theta(k^2\tau)$).

\subsection{CM Points}\label{sec:pre-cmp}

In this section we make some definitions which we will use in \S~\ref{sec:res-tra} to give conjectural expressions for most of the McKay--Thompson series of $\breve{W}_{\mathrm{3C}}$ (cf.\ 
(\ref{eqn:breveW3C}--\ref{eqn:breveF3Cg})) as generating functions for traces of singular moduli. 
More specifically, we prepare in this section to give expressions for the Fourier coefficients $\breve{C}_{{\rm 3C},g}(D)$ of the $\breve{F}_{{\rm 3C},g}$ as sums of values of the weight $0$ functions $\widetilde{T}_{\mathrm{3C},g}^3$ over CM points.

We start by reviewing some basics of integer-coefficient binary quadratic forms, closely following the conventions of \S~I of \cite{GKZ87}. 
To begin say that $D\in\ZZ$ is a discriminant if $D\equiv 0,1\xmod 4$, and say that such a $D$ is a fundamental discriminant if it is odd and square-free, or if $D=4d$ where $d$ is square-free and $d\equiv 2,3\xmod 4$.
For $A,B,C\in \ZZ$ we let $Q$ stand for the binary quadratic form $Q(x,y) = Ax^2+Bxy+Cy^2$ and call $D=B^2-4AC$ the discriminant of $Q$, and restrict attention to those $Q$ for which $D$ is negative.
We write $\a_Q$ for the CM point associated to $Q$. That is, we define $\a_Q$ to be the unique root of $Q(x,1)=Ax^2+Bx+C$ in the upper half-plane. 

We denote the space of binary quadratic forms of discriminant $D<0$ with $A>0$ by $\mathcal{Q}_D$, and for $N$ a positive integer set 
$\mathcal{Q}^{(N)}_D = 
\left\{ Q \in \mathcal{Q}_D\mid A\equiv 0  \xmod N  \right\}$. 
This set $\mc{Q}^{(N)}_D$ is acted on by 
$\Gamma_0(N)+$ according to the rule that
\begin{align}\label{eqn:pre-cmp:Qgammaxy}
(Q\vert \gamma)(x,y) := \frac{1}{n}Q(an x + by, cNx + dny), \ \ \ \ \ \  \gamma = \frac{1}{\sqrt{n}}\left(\begin{matrix} an & b \\ cN & dn \end{matrix}\right)\in \Gamma_0(N){+},
\end{align}
and the orbit space $\mc{Q}^{(N)}_D/\Gamma_0(N)$ 
is finite. 

Now suppose that $D_0$ is a negative fundamental discriminant 
and 
$D$ is a positive discriminant. 
Applying Proposition 1 of \S~I.2 of op.\ cit.\ we may define the 
genus character 
$\chi_{D_0}$ on $\mathcal{Q}_{D_0D}$
by setting
\begin{gather}\label{eqn:pre-cmp:chiD0N}
\chi_{D_0}(Q) := \left(
\frac{D_0'}{A}
\right)
\left(
\frac{D_0''}{C}
\right)
\end{gather}
in case $Q(x,y)=Ax^2+Bxy+Cy^2$,
when there exist discriminants $D_0'$, $D_0''$ 
such that $D_0=D_0'D_0''$ 
and 
$\gcd(D_0',A)=\gcd(D_0'',C)=1$, and by setting 
$\chi_{D_0}(Q) := 0$ 
when no such $D_0'$, $D_0''$ 
exist.

We will actually only be interested in the genus character $\chi_{D_0}$ with $D_0=-3$ in this work. 
For this case, and only for this case, we define a twist $\widetilde{\chi}_{-3}$ of 
$\chi_{-3}$, but restricted to $\mc{Q}^{(3)}_{-3D}$ for positive discriminants $D$, by setting
\begin{gather}\label{eqn:pre-cmp:widetildechiD03}
\widetilde{\chi}_{-3}(Q) := 
\begin{cases}
\left(
\frac{-3}{A}
\right)
&\text{ if $(3,A)=1$,}\\
\left(\frac{-3}{C}\right)
&\text{ if $A\equiv 0 \xmod 3$ and $(3,C)=1$ and $D\equiv 0,1,3,4,6,9\xmod 12$,}\\
-\left(\frac{-3}{C}\right)
&\text{ if $A\equiv 0 \xmod 3$ and $(3,C)=1$ and $D\not\equiv 0,1,3,4,6,9\xmod 12$,}\\
0&\text{ if $A\equiv C\equiv 0\xmod 3$,}
\end{cases}
\end{gather}
in case $Q(x,y)=3Ax^2+Bxy+Cy^2$ and $B^2-12AC=-3D$.
Note that this definition recovers the generalized genus character on $\mc{Q}^{(3)}_{-3D}$ that is denoted $\chi_{-3}$ in op.\ cit. (and denoted $\chi^{(3)}_{-3}$ in \cite{pmp}), when $D$ is a square modulo $12$ (i.e.\ $D\equiv 0,1,4,9\xmod 12$).
Our prescription (\ref{eqn:pre-cmp:widetildechiD03}) extends this generalized genus character to arbitrary positive discriminants $D$.

The primary role of binary quadratic forms here will be to define distinguished divisors in the upper half-plane. 
To put this precisely let $\Gamma=\Gamma_0(N)+S$ for some $S<\Ex_m$,
 let $D_0$ be a negative fundamental discriminant that is a square modulo $4N$, 
 and assume that $\chi$ is a $\Gamma$-invariant function on $\mc{Q}^{(N)}_{D_0D}$, for 
 $D$ a positive discriminant.
Then for $T$ a $\Gamma$-invariant modular function on $\HH$ 
we define the following {trace} of singular moduli,
\begin{align}\label{sec:pre-cmp:TrGammachiD0}
\Tr_{D_0,\Gamma}(T,D,\chi) := \sum_{Q\in \mc{Q}^{(N)}_{D_0D}/\Gamma} \chi(Q)\frac{T(\a_Q)}{\left|\overline{\Gamma}_Q\right| },
\end{align}
where $\left|\overline{\Gamma}_Q\right|$ is the order of the stabilizer of $Q$ when acted on by $\overline{\Gamma} = \Gamma/\{\pm \Id\}$.
For convenience we set 
$\Tr_{D_0,\Gamma}(T,D,\chi) :=0$ 
in case $D$ is not a 
positive discriminant.

\subsection{Rademacher Sums}\label{sec:pre-rad}

We will employ Rademacher sums in 
\S\S~\ref{app:ser-thm} and \ref{app:ser-gen}.
The particular expressions we use appear to be new in some cases, but their validity may be verified by standard techniques, such as appear in e.g.\ \cite{Cheng:2012qc,Duncan:2009sq,2014arXiv1406.0571W}.
For this reason we do not indulge in detailed derivations.

Our interest will exclusively lie in functions belonging to the spaces $\textsl{V}^{\wh}_{\frac12,m}(N,\chi)$ of \S~\ref{sec:pre-mod}. 
The input data is as follows.

\begin{enumerate}
	\item A weight $w$ (which we always take to be $\frac{1}{2}$).
	\item An index $m$ which specifies a Weil representation $\varrho_m$.
	\item A level $N$ which determines the congruence subgroup $\widetilde{\Gamma}_0(N)$ with respect to which the Rademacher sum will covariantly transform.
	\item A character $\chi:\Gamma_0(N)\to\CC^\ast$ with kernel containing $\Gamma_0(Nh)$ for some integer $h$. Together with $\varrho_m$ it determines the modular transformation properties.
	\item An exact divisor $n\in\Ex_N$ which places a pole of the function at the cusp $W_n\cdot i\infty$ where $W_n$ is an Atkin--Lehner coset of $\Gamma_0(N)$.
	\item A vector $\vec{\mu}$ of length $2m$ which specifies the components in which poles will appear, as well as their degrees.
\end{enumerate}
The output is a vector-valued function 
$R^{[\vec{\mu}],(n)}_{m,N,\chi,w}=(R^{[\vec{\mu}],(n)}_{m,N,\chi,w,r})$ (which we abbreviate to $R^{[\vec{\mu}],(n)}_{m,N,\chi}=(R^{[\vec{\mu}],(n)}_{m,N,\chi,r})$ since we are taking $w=\frac{1}{2}$) which satisfies 
\begin{align}
\chi(\gamma)R^{[\vec{\mu}],(n)}_{m,N,\chi}(\gamma\tau) = u(\tau)\varrho_m(\gamma,u)R^{[\vec{\mu}],(n)}_{m,N,\chi}(\tau),   \ \ \ \ \ \ (\gamma,u)\in \widetilde{\Gamma}_0(N).
\end{align}

Set $\psi(\gamma):=\chi(\gamma)\varrho_m({\gamma},\sqrt{c\tau+d})^{-1}$ when $\gamma=\left(\begin{smallmatrix}*&*\\c&d\end{smallmatrix}\right)\in\Gamma_0(N)$ and define the numbers $0\leq \alpha_r<1$ for $r=0,\dots,2m-1$ through the equation $\ex(\alpha_r) = \psi_{rr}(T)$. We will define 
$R^{[\vec{\mu}],(n)}_{m,N,\chi}$ by specifying the coefficients of the Fourier expansions
\begin{align}\label{eqn:rademacherqexpansion}
R^{[\vec{\mu}],(n)}_{m,N,\chi,r}(\tau) = \epsilon_{\vec{\mu},n,r} q^{\mu_r}+ \sum_{\nu \in \ZZ^+-\alpha_r}c^{[\vec{\mu}],(n)}_{m,N,\chi,r}(\nu)q^\nu
\end{align}
of its component functions $R^{[\vec{\mu}],(n)}_{m,N,\chi,r}$.
The symbol $\epsilon_{\vec{\mu},n,r}$ in (\ref{eqn:rademacherqexpansion}) is defined by setting $\epsilon_{\vec{\mu},n,r} := 1$ if $n=1$ and $\mu_r\neq 0$, and $\epsilon_{\vec{\mu},n,r}:=0$ otherwise. 

To specify the coefficients $c^{[\vec{\mu}],(n)}_{m,N,\chi,r}(\nu)$ in (\ref{eqn:rademacherqexpansion}) we first define the ``effective singular vector'' 
$\vec{\mu}^{(n)}=(\vec{\mu}^{(n)}_r)$ to have components 
\begin{gather}\label{eqn:effectivesingularvector}
\mu^{(n)}_r:=-M+n(\mu_r-\floor*{\mu_r}),
\end{gather} 
where $M$ is an integer chosen so as to make $\ceil*{\mu_r}-1<\mu_r^{(n)}\leq \ceil*{\mu_r}$, and $\floor{x}$  and $\ceil{x}$ denote the floor and ceiling of $x$, respectively. 
Next define the ``effective multiplier system'' $\psi^{(n)}=(\psi^{(n)}_{rs})$, a matrix-valued function on $W_n$, as
\begin{align}\label{eqn:effectivemultipliersystem}
\psi^{(n)}\left(\gamma\right)  := n^{\frac{1}{4}}\chi(A^{-1}\gamma)\varrho_m^{-1}\left( \left(\begin{matrix} an & b \\ c/n & d \end{matrix}\right), \sqrt{\frac{c}{n}\tau+d} \right), \ \ \ \ 
\gamma =\frac{1}{\sqrt{n}} \left(\begin{matrix} an & b \\ c & dn \end{matrix}\right)\in W_n,
\end{align}
where $A$ is a fixed coset representative for $W_n$. 

The choice of $A$ in (\ref{eqn:effectivemultipliersystem}) entails an ambiguity by an overall $h$-th root of unity, where $h$ is the order of $\chi$. 
If $n=1$ we choose $A$ to be the identity, and the choice of $A$ is irrelevant when the character $\chi$ is trivial. 
In the few cases that the ambiguity actually arises, we make a specific choice (see \S~\ref{app:ser-thm}).

Now for 
$\gamma =\frac{1}{\sqrt{n}} \left(\begin{smallmatrix} an & b \\ c & dn \end{smallmatrix}\right)\in W_n$ we put
\begin{align}
\begin{split}\label{eqn:RadKB}
K_{\gamma,\psi}^{(n)}(\vec{\mu},\nu)_{rs} &:= \ex\Big(\nu \frac{dn}{c}\Big)\psi_{rs}^{(n)}(\gamma)\ex\Big(\mu^{(n)}_r\frac{an}{c}\Big), \\
B^{(n)}_{\gamma,\frac{1}{2}}(\vec{\mu},\nu)_r &:= \ex\Big(-\frac{1}{8}\Big)\left(-\frac{\mu_r^{(n)}}{\nu}   \right)^{\frac{1}{4}} \frac{2\pi \sqrt{n}}{c} I_{\frac{1}{2}}\left(\frac{4\pi \sqrt{-n\mu_r^{(n)}\nu}}{c} \right),
\end{split}
\end{align}
where $I_{\alpha}(x)$ is the modified Bessel function of the first kind. For $\alpha=\frac12$, which is the only case we will make use of in this paper, it is defined away from $x=0$ by 
\begin{align}
I_{\frac12}(x) = \sqrt{\frac{2}{\pi x}}\sinh(x).
\end{align}

With the conventions (\ref{eqn:effectivesingularvector}--\ref{eqn:RadKB}) in place the Fourier coefficients $c^{[\vec{\mu}],(n)}_{m,N,\chi,r}(\nu)$ in (\ref{eqn:rademacherqexpansion}) can now be expressed as
\begin{align}
c^{[\vec{\mu}],(n)}_{m,N,\chi,r}(\nu) := 
\lim_{K\to\infty}\sum_{s \xmod 2m} \sum_{\Gamma_\infty \backslash (W_n^\times)_K / \Gamma_\infty} B_{\gamma,\frac{1}{2}}^{(n)}(\vec{\mu},\nu)_s K^{(n)}_{\gamma,\psi}(\vec{\mu},\nu)_{sr} 
\end{align}
in the case that $\nu\neq 0$, and
\begin{align}
c^{[\vec{\mu}],(n)}_{m,N,\chi,r}(0) 
:= \frac{1}{2}\ex(-\tfrac{1}{8})\lim_{K\to\infty}\sum_{s \xmod 2m}\frac{(2\pi)^{\frac{3}{2}}\sqrt{-\mu_s^{(n)}}}{\Gamma(\frac{3}{2})}\sum_{\Gamma_\infty \backslash (W_n^\times)_K / \Gamma_\infty} \left(\frac{\sqrt{n}}{c}\right)^{\frac{3}{2}}K_{\gamma,\psi}^{(n)}(\vec{\mu},0)_{sr}
\end{align}
when $\nu = 0$. In the above, $\Gamma(z)$ is the Gamma function, $\Gamma_\infty := \left\{ T^n \mid n \in \ZZ\right\}$, and 
\begin{align}
(W_n^\times)_K := \left.\left\{ \left(\begin{matrix} a&b \\ c & d\end{matrix}\right)\in W_n \,\right|\,  0 < |c| < K \right\}.
\end{align}
The sum over double cosets can be made more explicit, leading to the formulae 
\begin{align}
\begin{split}
c^{[\vec{\mu}],(n)}_{m,N,\chi,r}(\nu) &= 
\sum_{s \xmod 2m} \sum_{\substack{c>0 \\ c\equiv 0  \xmod N}}\sum_{\substack{0\leq a < c/n\\ \gcd(an,c/n) = 1}} 
B_{\gamma,\frac{1}{2}}^{(n)}(\vec{\mu},\nu)_s 
K^{(n)}_{\gamma,\psi}(\vec{\mu},\nu)_{sr}  \ \ \ \ \ \ \ \  (\nu\neq 0),\\
c^{[\vec{\mu}],(n)}_{m,N,\chi,r}(0) &= 
\frac{1}{2}\ex(-\tfrac{1}{8})\sum_{s \xmod 2m}\frac{(2\pi)^{\frac{3}{2}}\sqrt{-\mu_s^{(n)}}}{\Gamma(\frac{3}{2})}\sum_{\substack{c>0 \\ c\equiv 0  \xmod N}}\sum_{\substack{0\leq a < c/n \\ \gcd(an,c/n) = 1}} 
\left(\frac{\sqrt{n}}{c}\right)^{\frac{3}{2}}
K_{\gamma,\psi}^{(n)}(\vec{\mu},0)_{sr}.
\end{split}
\end{align}

If $\Gamma$ is a Fuchsian group of the form $\Gamma = \Gamma_0(N)+S$ with $S$ a subgroup of $\Ex_N$ then we put 
\begin{gather}\label{eqn:unnormalizedrademacher}
\begin{split}
R^{[\vec{\mu}]}_{m,\Gamma,\chi} 
&:= \sum_{n\in S} R^{[\vec{\mu}],(n)}_{m,N,\chi},\\
c^{[\vec{\mu}]}_{m,\Gamma,\chi,r}(\nu)
&:= \sum_{n\in S} c^{[\vec{\mu}],(n)}_{m,N,\chi,r}.
\end{split}
\end{gather}
We will also use notation like e.g.\ $3+\bar{3}$ to denote the combination $R^{[\vec{\mu}]}_{m,3+\bar{3},\chi}=R^{[\vec{\mu}],(1)}_{m,3,\chi}-R^{[\vec{\mu}],(3)}_{m,3,\chi}$, i.e.\ the bar indicates that the Rademacher sum attached to the exact divisor $3$ should be subtracted rather than added in the sum. (Cf.\ the notation of \cite{ferenbaugh1993}.)

%---------------------------------------------------------------------------------------%
\section{Results}\label{sec:res}
%---------------------------------------------------------------------------------------%

In \S~\ref{sec:res-pro} we recall the main result of \cite{pmp}. Then in \S~\ref{sec:res-ava} we apply this result to Thompson moonshine phenomena in weight one-half and in weight zero. In particular, we produce two new avatars of Thompson moonshine: the \emph{weight one-half $\mathrm{3C}$-generalized Thompson moonshine module }${W}_{\mathrm{3C}}$ and the \emph{weight zero penumbral Thompson moonshine module }$V^{(-3,1)}$, which are related to the weight zero 3C-generalized Thompson module $V^\natural_{\mathrm{3C}}$ and the weight one-half penumbral Thompson module ${W}^{(-3,1)}$ respectively through our construction $\SQ$.
Specifically, $V^{(-3,1)} = \SQ({W}^{(-3,1)})$ and $(V^\natural_{\mathrm{3C}})^{\otimes 3} = \SQ({W}_{\mathrm{3C}})$. 
In \S~\ref{sec:res-tra} we present conjectural expressions for (most of) the McKay--Thompson series of the $\Th$-module $W_{\rm 3C}$ in terms of traces of singular moduli associated to the McKay--Thompson series of $V_{\rm 3C}^\natural$, and thereby take a step towards an inverse to $\SQ$ in the 3C case.
Finally, in \S~\ref{sec:res-gen} we offer some evidence that the story we have told for the Thompson group will generalize.

\subsection{Products}\label{sec:res-pro}

Here we recall the defining features of the construction $\SQ$, whose validity is established in \cite{pmp}.
To formulate these features let $G$ be a finite group, 
and let $W$ be a virtual graded $G$-module with a grading of the form 
\begin{gather}\label{eqn:SQ-BP:W}
	W=\bigoplus_{r\xmod 2m}\bigoplus_{D\equiv r^2\xmod 4m}W_{r,\frac{D}{4m}}
\end{gather}
for some positive integer $m$. (See \S~3.4 of op.\ cit.\ for a discussion of virtual graded $G$-modules.)
Define the
McKay--Thompson series 
$F^W_g=(F^W_{g,r})$
associated to $W$ and $g\in G$ by setting
\begin{gather}\label{eqn:SQ-BP:FWgr}
	F^W_{g,r}(\tau):=
	\sum_{D\equiv r^2\xmod 4m} \tr(g|W_{r,\frac{D}{4m}})q^{\frac{D}{4m}}
\end{gather}
for $r\xmod 2m$.
With this notation say that $W$ as in (\ref{eqn:SQ-BP:W}) is a {weakly holomorphic (virtual graded) $G$-module of weight $\frac12$ and index $m$} 
if for each $g\in G$ there exists a positive integer multiple $N_g$ of $o(g)$ such that $\frac{N_g}{o(g)}$ divides $o(g)$, and 
\begin{gather}\label{eqn:SQ-BP:FWgnVwh}
F^W_{g^n}\in V^\wh_{\frac12,m}\left(\tfrac{N_g}{n}\right)
\end{gather}
whenever $n$ is a divisor of $N_g$. Also,
say that such a $G$-module $W$ is rational if the coefficients 
\begin{gather}\label{eqn:SQ-BP:CWgDr}
C^W_{g}(D,r):=\tr(g|W_{r,\frac{D}{4m}})
\end{gather}
of the McKay--Thompson series $F^W_g$ of (\ref{eqn:SQ-BP:FWgr}) are rational integers, 
for all $g\in G$ and $D,r\in \ZZ$.

To a weakly holomorphic $G$-module $W$ of weight $\frac12$ and index $m$ in the above sense we attach a generalized class number $H^W$, following \cite{pmp}, by setting
\begin{gather}\label{eqn:HW}
	H^W
	:=
	\sum_{r\xmod 2m}\sum_{\substack{D\equiv r^2\xmod 4m\\D\leq 0}}
	C^W(D,r)H_m(D,r)
\end{gather}
where 
$C^W(D,r)=C^W_e(D,r)$ (cf.\ (\ref{eqn:SQ-BP:CWgDr})) and $H_m(D,r)$ is 
the coefficient of $q^{-\frac{D}{4m}}$ in the Fourier expansion of $G_{m,r}$, 
for $G_m=(G_{m,r})$ 
as defined in \S~9 of \cite{Borcherds:1996uda}. In what follows we will mostly be concerned with the case that $m=1$, whereby $H_m(D,r)=H_1(D,D)$ is the Hurwitz class number of $D$, as defined, e.g.\ in 
\cite{ZagierTSM}.
The generating function $\breve{G}_1(\tau):=G_{1,0}(4\tau)+G_{1,1}(4\tau)$ (cf.\ (\ref{eqn:breveF})) in this case satisfies
\begin{gather}\label{eqn:breveG1}
	\breve{G}_1(\tau)=\sum_{D\leq 0}H_1(D,D)q^D=-\frac1{12}+\frac13q^3+\frac12q^4+q^7+q^8+\dots.
\end{gather}

Now for $W$ a rational weakly holomorphic $G$-module of weight $\frac12$ and index $m$ we consider the product 
\begin{align}\label{eqn:res-pro:PsiWg}
\Psi^W_g(\tau) := q^{-H}\exp\left(-\sum_{n>0}\sum_{k>0} C^W_{g^k}(n^2,n)\frac{q^{nk}}{k}   \right)
\end{align}
for each $g\in G$ (with $H=H^W$). 
A priori it is not clear that this makes sense, 
but the fact that the right-hand side of (\ref{eqn:res-pro:PsiWg}) converges for $\Im(\tau)$ sufficiently large, and extends by analytic continuation to a holomorphic function on $\HH$, is established in \cite{pmp} by applying the singular theta lift developed in \cite{Harvey:1995fq,Borcherds:1996uda}.
Henceforth it is 
this analytically continued function that we have in mind when we write $\Psi^W_g$.

Next define integers $v_b(g|W_{0,0})$ for $b>0$ and $g\in G$ by requiring that
\begin{gather}\label{eqn:res-pro:pigW00}
	\pi(g|W_{0,0}) := \prod_{b>0}b^{v_b(g|W_{0,0})}
\end{gather}
is the Frame shape defined by the action of $g$ on $W_{0,0}$ (cf.\ (\ref{eqn:SQ-BP:W})), and set
\begin{gather}\label{eqn:SQ-BP:etaWg}
	\eta^W_g(\tau) := \prod_{b>0}\eta(b\tau)^{2v_b(g|W_{0,0})}.
\end{gather}
(See e.g.\ \S~3.4 of \cite{pmp} for background on Frame shapes.)
Then it develops (see \S~4.4 of \cite{pmp}) that the function $T^W_g$, defined for $g\in G$ by setting
\begin{gather}\label{eqn:SQ-BP:TWg}
	T^W_g:=\frac{\Psi^W_g}{\eta^W_g},
\end{gather}
is a weakly holomorphic modular form of weight $0$ with level for each $g$, 
and we may say that the purpose of the construction $\SQ$ is to provide a virtual graded $G$-module that realizes these functions (\ref{eqn:SQ-BP:TWg}) as its McKay--Thompson series. 
To put this precisely 
consider 
a virtual 
$\ZZ$-graded $G$-module 
$V=\bigoplus_nV_n$,
and say that such an object is weakly holomorphic of weight $0$ if there exists a constant $h$ such that 
\begin{gather}\label{eqn:SQ-BP:fVg}
f^V_g(\tau):=
	\sum_{n}
	\tr\left(\left.g\right|V_{n}\right)q^{n-h}
\end{gather} 
is a weakly holomorphic modular form of weight $0$, with level depending upon $g$, for each $g\in G$.
Then the main result of \cite{pmp} is the following.
\begin{thm}\label{thm:SQ}
Let $G$ be a finite group, 
and let $W$ be a rational weakly holomorphic $G$-module of weight $\frac12$ and index $m$, for some positive integer $m$.
Then there exists a unique weakly holomorphic 
$G$-module $V=V^W$ of weight $0$ such that 
	$
	f^V_g
	= 
	T^W_g 
	$
for all $g\in G$.
\end{thm}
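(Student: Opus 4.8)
The plan is to translate the statement into a purely character-theoretic assertion: for each integer $n$ we must produce a virtual $G$-module $V_n$ whose character records the $q^{n-h}$-coefficient of $T^W_g$ as $g$ varies over $G$, and then take $V=\bigoplus_nV_n$. Write $U_0:=W_{0,0}$ and, for $n>0$, $U_n:=W_{n,\frac{n^2}{4m}}$ for the relevant graded pieces of $W$ (cf.\ (\ref{eqn:SQ-BP:W})); these are genuine virtual $G$-modules by hypothesis. Uniqueness will be automatic: once the grading is normalized (so that $V_n=0$ for $n<0$ and $V_0\neq0$, which pins down the constant $h$ as the order of the pole of $T^W_e$ at $i\infty$), the condition $f^V_g=T^W_g$ determines $\tr(g|V_n)$ for all $g\in G$ and all $n$, and a virtual $G$-module over a field of characteristic zero is determined by its character. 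So the real content is existence, and for that we shall use only the two analytic facts already recalled from \cite{pmp}: that the product in (\ref{eqn:res-pro:PsiWg}) converges for $\Im(\tau)$ large and continues holomorphically to $\HH$, and that $T^W_g=\Psi^W_g/\eta^W_g$ is then a weakly holomorphic modular form of weight $0$ with level depending on $g$.

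The key observation is that the right-hand side of (\ref{eqn:res-pro:PsiWg}) is a plethystic exponential. For any virtual $G$-module $R$ one has the $\lambda$-ring identity $\exp\!\big(-\sum_{k>0}\frac{t^k}{k}\tr(g^k|R)\big)=\det(1-tg\mid R)$, a power series in $t$ whose coefficients are, up to sign, traces of $g$ on the exterior powers of $R$. Since $C^W_{g^k}(n^2,n)=\tr(g^k|U_n)$, applying this identity with $t=q^n$ and $R=U_n$, and multiplying over $n>0$, shows that for $\Im(\tau)$ large
\[
q^{H^W}\Psi^W_g(\tau)=\prod_{n>0}\det(1-q^n g\mid U_n).
\]
Since exterior powers, symmetric powers, and tensor products of virtual $G$-modules are again virtual $G$-modules, the Fourier coefficients of this product are characters of virtual $G$-modules assembled from the ``perfect-square'' pieces $U_n$, evaluated at $g$; this is where the hypothesis that $W$ is a virtual graded $G$-module, rather than merely a vector-valued modular form, is used. (In particular $\tr(g^k|U_n)$ is the value at $g$ of the $k$-th Adams operation on the character of $U_n$, so the coefficients really are characters, not just arbitrary polynomial combinations of Fourier coefficients of the $F^W_{g^k}$.)

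For the eta factor, expand $\eta(b\tau)^{2v_b(g|U_0)}=q^{\frac{b\,v_b(g|U_0)}{12}}\prod_{m>0}(1-q^{bm})^{2v_b(g|U_0)}$, and use the defining property of the Frame shape $\pi(g|U_0)=\prod_b b^{v_b(g|U_0)}$, namely $\prod_b(1-q^{bm})^{v_b(g|U_0)}=\det(1-q^m g\mid U_0)$, together with $\sum_b b\,v_b(g|U_0)=\dim U_0$, to obtain $\eta^W_g(\tau)=q^{(\dim U_0)/12}\prod_{m>0}\det(1-q^m g\mid U_0)^2$. Hence $q^{(\dim U_0)/12}/\eta^W_g=\prod_{m>0}\det(1-q^m g\mid U_0)^{-2}$, whose Fourier coefficients are traces of $g$ on symmetric powers of $U_0$ and their tensor products, again characters of virtual $G$-modules. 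Putting $h:=H^W+\tfrac1{12}\dim U_0$, the series $q^hT^W_g=\big(q^{H^W}\Psi^W_g\big)\big(q^{(\dim U_0)/12}/\eta^W_g\big)$ is, for $\Im(\tau)$ large, of the shape $\sum_{n\geq0}\chi_n(g)q^n$ with each $\chi_n$ the character of a well-defined virtual $G$-module $V_n$ that does not depend on $g$ (and $\chi_0$ trivial). Setting $V:=\bigoplus_{n\geq0}V_n$ gives $f^V_g=\sum_n\tr(g|V_n)q^{n-h}=T^W_g$; since $T^W_g$ is weakly holomorphic modular of weight $0$ with level, $V$ is weakly holomorphic of weight $0$ in the sense of (\ref{eqn:SQ-BP:fVg}), and $V^W:=V$ is the required module.

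The step I expect to demand the most care is the bookkeeping that glues the two factors: checking that the Frame-shape eta product $\eta^W_g$ is \emph{precisely} the (squared) symmetric-power generating function of $U_0$, including the exact power of $q$, so that with $h=H^W+\tfrac1{12}\dim U_0$ the combined series $q^hT^W_g$ begins at $q^0$ with constant term $1$, and so that the resulting grading and constant $h$ are consistent across all $g\in G$ simultaneously. The hard analytic input — convergence and holomorphic continuation of the product (\ref{eqn:res-pro:PsiWg}), and the modularity of $T^W_g$ — is imported wholesale from \cite{pmp}, and ultimately rests on the singular theta lift of \cite{Harvey:1995fq,Borcherds:1996uda}; the rationality hypothesis on $W$ (integrality of the exponents $C^W(n^2,n)$) is what is needed there, not in the character-theoretic argument above.
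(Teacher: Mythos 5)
Your proposal is essentially correct, but be aware that this paper contains no proof of Theorem~\ref{thm:SQ} to compare against: the theorem is quoted as the main result of the companion paper \cite{pmp}, and the facts you import (convergence of the product (\ref{eqn:res-pro:PsiWg}) for $\Im(\tau)$ large, its analytic continuation, and the weight-zero modularity of $T^W_g$ with level) are precisely the ingredients this paper also defers to \S\S~4.3--4.4 of \cite{pmp} and, ultimately, to Theorem 13.3 of \cite{Borcherds:1996uda}. What you supply on top of those imported facts is the right mechanism and is faithful to how the paper describes $\SQ$ as a lift of the theta lift ``to the level of virtual $G$-modules'': rewriting $q^{H^W}\Psi^W_g$ as $\prod_{n>0}\det\bigl(1-q^n g\mid W_{n,\frac{n^2}{4m}}\bigr)$ via the $\lambda$-ring identity, and $q^{\frac{1}{12}\dim W_{0,0}}\,(\eta^W_g)^{-1}$ as $\prod_{m'>0}\det\bigl(1-q^{m'} g\mid W_{0,0}\bigr)^{-2}$ via the defining property of the Frame shape, exhibits every Fourier coefficient of $q^hT^W_g$ (with $h=H^W+\frac{1}{12}\dim W_{0,0}$, independent of $g$) as the value at $g$ of one fixed element of the representation ring built functorially from the graded pieces of $W$, which is exactly what existence of the virtual graded module $V^W$ requires; and your observation that the rationality hypothesis is consumed by the analytic/Borcherds input rather than by this character-theoretic step is also correct. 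The one point to make explicit is the uniqueness clause: as stated it can only hold up to an integer shift of the $\ZZ$-grading compensated by $h$, so your normalization ($V_n=0$ for $n<0$ and $V_0\neq 0$) is doing genuine work and should be stated as the convention under which ``unique'' is asserted, matching the convention fixed in \cite{pmp}.
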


Following \cite{pmp} we define $\SQ(W)$ to be the weakly holomorphic $G$-module $V^W$ whose existence and uniqueness is asserted by Theorem \ref{thm:SQ},
\begin{gather}\label{eqn:SQ-BP:SQW}
	\SQ(W) := V^W.
\end{gather}

\subsection{Avatars}\label{sec:res-ava}

As we have stated in \S~\ref{sec:int-com}, a main motivation for this paper is the comparison of two previously known examples of Thompson moonshine. 
The first, denoted $V^\natural_{\mathrm{3C}}$, has been known since at least 1980 \cite{queen}, and is a special case of generalized monstrous moonshine; 
its McKay--Thompson series are the weight $0$ principal moduli ${T}_{\mathrm{3C},g}$ described in \S~\ref{app:ser-thz}.
The second, 
denoted $\breve{W}^{(-3,1)}$ in \S~\ref{sec:int}, 
was introduced in \cite{Harvey:2015mca}, and 
is a special case of penumbral moonshine; its McKay--Thompson series 
are distinguished as Rademacher sums of weight $\frac12$ and are described in
\S~\ref{app:ser-thm}.

In this section we compare these two $\Th$-modules using the construction $\SQ$ developed in \cite{pmp}. 
To do this we first need to introduce a counterpart 
\begin{gather}\label{eqn:WtobreveW}
W\mapsto \breve W
\end{gather}
 for $G$-modules, to the operation $F\mapsto \breve F$ on vector-valued modular forms, defined by (\ref{eqn:prebreveF}--\ref{eqn:breveF}). 
The input for this operation is a virtual graded $G$-module $W$ as in (\ref{eqn:SQ-BP:W}). The output is the virtual graded $G$-module $\breve W=\bigoplus_D \breve W_D$ where
\begin{gather}\label{eqn:breveWD}
	\breve{W}_D :=
	\bigoplus_{\substack{r\xmod 2m\\D\equiv r^2\xmod 4m}}
	W_{r,\frac{D}{4m}},
\end{gather}
for $m$ as in (\ref{eqn:SQ-BP:W}). The connection to the operation $F\mapsto \breve{F}$ is that if $F^W_g=(F^W_{g,r})$ is the McKay--Thompson series arising from the action of $g\in G$ on $W$, as defined by (\ref{eqn:SQ-BP:FWgr}), then its image $\breve F^W_g$ under this map (\ref{eqn:prebreveF}--\ref{eqn:breveF})
satisfies 
\begin{gather}\label{eqn:breveFWg}
	\breve F^W_g(\tau) = \sum_D \tr(g|\breve{W}_D)q^{D}.
\end{gather}
That is to say, the operation $W\mapsto \breve W$ defined by (\ref{eqn:WtobreveW}--\ref{eqn:breveWD}) is such that the McKay--Thompson series associated to $\breve{W}$ are obtained by applying the map $F\mapsto \breve{F}$ of (\ref{eqn:breveF}) to the McKay--Thompson series associated to $W$.

In this section we are interested in the case that $m=1$, whereby the map $W\mapsto\breve{W}$ of (\ref{eqn:WtobreveW}--\ref{eqn:breveWD}) is an isomorphism. 
We henceforth take $W^{(-3,1)}$ to be the unique virtual graded $\Th$-module as in (\ref{eqn:SQ-BP:W}), with $m=1$, whose image $\breve{W}^{(-3,1)}$ under (\ref{eqn:WtobreveW}--\ref{eqn:breveWD}) agrees with the $\Th$-module denoted as such in \S~\ref{sec:int} (see \S~\ref{sec:int-woh}, and especially (\ref{eqn:breveW31})). In this way we recover the meaning of the notation $W^{(-3,1)}$ in \cite{pmo}, and also obtain a $\Th$-module to which the construction $\SQ$ of \cite{pmp} (see \S~\ref{sec:res-pro}) may be applied. So we apply it to $W^{(-3,1)}$, and on the strength of 
Theorem \ref{thm:SQ}
obtain a 
weakly holomorphic $\Th$-module
\begin{gather}\label{eqn:V-31}
	V^{(-3,1)}:=\SQ(W^{(-3,1)})
\end{gather}
of weight $0$, with associated McKay--Thompson series $f^V_g$ that admit product formulae $f^V_g=T^W_g$, where $V=V^{(-3,1)}$ and $W=W^{(-3,1)}$. 
To ease notation as we put this precisely let us write $T^{(-3,1)}_g$ in place of $T^W_g$, when $W=W^{(-3,1)}$. 
\begin{thm}[Weight zero avatar of penumbral Thompson moonshine]\label{thm:res-ava:wz}
There exists a weakly holomorphic $\Th$-module $V^{(-3,1)}$ of weight $0$ whose McKay--Thompson series are the functions $T^{(-3,1)}_g$. 
Moreover, we have
\begin{align}\label{eqn:res-ava:Tg31}
    T_g^{(-3,1)}
=    \widetilde{T}_{\mathrm{3C},g}^2
\end{align}
for $3$-regular $g\in\Th$.
\end{thm}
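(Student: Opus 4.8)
The plan is to establish the two parts of the theorem --- the existence of $V^{(-3,1)}$ with the stated McKay--Thompson series, and the identity \eqref{eqn:res-ava:Tg31} --- separately. The first part is essentially an instance of Theorem~\ref{thm:SQ}. First I would verify that the virtual graded $\Th$-module $W^{(-3,1)}$ (normalized as in \S~\ref{sec:int-woh}, so that its image $\breve W^{(-3,1)}$ is the penumbral Thompson module of \cite{Harvey:2015mca, Griffin2016}, cf.\ \eqref{eqn:breveW31}) is a \emph{rational weakly holomorphic $\Th$-module of weight $\tfrac12$ and index $1$} in the sense of \S~\ref{sec:res-pro}: the McKay--Thompson series $\breve F^{(-3,1)}_g$ are the weight-$\tfrac12$ Rademacher sums specified in \S~\ref{app:ser-thm}, from which one reads off, for each $g$, a level $N_g$ with $N_g/o(g)\mid o(g)$ and a character of $\Gamma_0(N_g)$ so that the transformation property \eqref{eqn:SQ-BP:FWgnVwh} holds for all powers of $g$, while the coefficients $\breve C^{(-3,1)}_g(D)$ are integers because they are graded traces on $\breve W^{(-3,1)}$. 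Theorem~\ref{thm:SQ} then furnishes the unique weakly holomorphic weight-$0$ module $V^{(-3,1)}=\SQ(W^{(-3,1)})$ with $f^{V^{(-3,1)}}_g=T^{(-3,1)}_g$, and unwinding \eqref{eqn:res-pro:PsiWg}--\eqref{eqn:SQ-BP:etaWg} yields the product formula \eqref{eqn:int-com:T31g}: one computes $H^{W^{(-3,1)}}=-20$ from \eqref{eqn:HW}, \eqref{eqn:breveG1} and the fact that $\breve F^{(-3,1)}$ has principal part $2q^{-3}$ and constant term $248$ (cf.\ \eqref{eqn:int-com:breveF-31}), while $W^{(-3,1)}_{0,0}=\mathbf{248}$ gives $\eta^{W^{(-3,1)}}_g=\eta_g^2$ with $\eta_g$ as in \eqref{eqn:int-com:etag}.

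For the identity I would argue via divisors. Fix a $3$-regular $g\in\Th$ and let $\Gamma_g$ be the genus-zero group of which $\widetilde{T}_{\mathrm{3C},g}$ is the principal modulus (recorded in \S~\ref{app:ser-thz}); then $\widetilde{T}_{\mathrm{3C},g}^{2}$ is a weakly holomorphic weight-$0$ form on $\Gamma_g$, and its divisor on the compact Riemann surface $X_{\Gamma_g}$ is read off directly from the unique zero and unique pole of the Hauptmodul $\widetilde{T}_{\mathrm{3C},g}$, each counted twice. On the other side, $T^{(-3,1)}_g=\Psi^{W^{(-3,1)}}_g/\eta^{W^{(-3,1)}}_g$ is a Borcherds product in the $O_{2,1}$ setting attached to $\breve F^{(-3,1)}_g$, so by the singular theta lift of \cite{Harvey:1995fq, Borcherds:1996uda} (as packaged in \cite{pmp}, and following \cite{Borcherds1995} for $g=e$, cf.\ \eqref{eqn:int-com:infiniteproductT3C}) its divisor on $X_{\Gamma_g}$ is $\sum_{D<0}\breve C^{(-3,1)}_g(D)\,Z_g(D)$ --- a combination of CM divisors of discriminant $D$ --- together with an explicit cuspidal part coming from the Weyl vector of the lift and from the factor $\eta^{W^{(-3,1)}}_g=\eta_g^2$. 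Both divisors being explicit and finite, the remaining task is to verify that they coincide on $X_{\Gamma_g}$ when $g$ is $3$-regular; granting this, $T^{(-3,1)}_g/\widetilde{T}_{\mathrm{3C},g}^{2}$ is holomorphic and non-vanishing on $X_{\Gamma_g}$, hence a nonzero constant, and comparing leading Fourier coefficients at $\infty$ forces it to equal $1$, which is \eqref{eqn:res-ava:Tg31}. This is a finite verification over the (roughly sixteen) $3$-regular conjugacy classes of $\Th$, using the explicit data of \S\S~\ref{app:ser-thm} and~\ref{app:ser-thz}.

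I expect the principal obstacle to be precisely this divisor comparison: pinning down, for each $3$-regular class, the level, character, and cusp structure of $\Gamma_g$; correctly identifying the CM divisors $Z_g(D)$ and the Weyl vector so as to assemble the full divisor of the Borcherds product $\Psi^{W^{(-3,1)}}_g$; and accounting for the cuspidal contribution of $\eta_g^{2}$. It is here that the $3$-regularity hypothesis enters: when $3\mid o(g)$, the $3$-part of $o(g)$ introduces an extra cusp at which the divisors of $T^{(-3,1)}_g$ and of $\widetilde{T}_{\mathrm{3C},g}^{2}$ disagree, so that $T^{(-3,1)}_g\neq\widetilde{T}_{\mathrm{3C},g}^{2}$ in general; it is this failure that motivates the new module $\breve W_{\mathrm{3C}}$ of Theorem~\ref{thm:res-ava:wh}.
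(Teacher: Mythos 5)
Your proposal matches the paper's argument: the first part is obtained by applying Theorem~\ref{thm:SQ} to $W^{(-3,1)}$ (with the same computations $H^{W}=-20$ and $\eta^W_g=\eta_g^2$), and the identity \eqref{eqn:res-ava:Tg31} is proved exactly as in the paper by comparing leading terms at $\infty$ and divisors, with the divisor of the Borcherds product read off from Item~2 of Theorem~13.3 of \cite{Borcherds:1996uda} in the interior and from the generalized class numbers (your Weyl-vector/cuspidal data) together with the $\eta_g^2$ factor at the cusps, followed by a finite case-by-case check over the $3$-regular classes. This is essentially the same proof, so no further comparison is needed.
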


\begin{proof}
As we have explained, the first statement follows from Theorem \ref{thm:SQ} and the definition (\ref{eqn:V-31}) of $V^{(-3,1)}$.
It remains to verify that (\ref{eqn:res-ava:Tg31}) holds when $o(g)$ is coprime to $3$. 
For this it suffices to check the leading terms in the respective expansions as $\Im(\tau)\to \infty$, and the divisors on each side.

The divisor of the right-hand side of (\ref{eqn:res-ava:Tg31}) is known, as the function in question is the square of an explicitly determined principal modulus (see \S~\ref{app:ser-thz}). 
For the divisor on the left-hand side we apply 
Item 2 of Theorem 13.3 of \cite{Borcherds:1996uda}
to the image $\check{F}=\check{F}^W_g$ of the collection $\{F^{W}_{g^n}\}_{n|N}$ under the repackaging map
\begin{gather}\label{eqn:FWgntocheckFWg}
\left\{F^W_{g^n}\right\}_{n\vert N_g} \mapsto \check{F}^W_g=\left(\check{F}^W_{g,i,j,r}\right)
\end{gather} 
defined in \S~4.2 of \cite{pmp}, where $W=W^{(-3,1)}$, and this determines the zeros and poles of $T_g^{(-3,1)}$ on the upper half-plane. (See \S~4.3 of \cite{pmp} for the specialization of Theorem 13.3 of \cite{Borcherds:1996uda} to the situation of relevance here.)
We then compute the 
generalized class numbers $H^W_\ell(g)$, as defined in \S~4.3 of \cite{pmp},
in order to determine the zeros and poles at the various cusps. 

For the remainder of this proof let us take $W=W^{(-3,1)}$. Then $W_{0,0}$ is the unique irreducible $\Th$-module of dimension $248$, so we have $\eta^W_g=\eta_g^2$ for all $g\in \Th$, where $\eta_g$ is as in (\ref{eqn:int-com:etag}). In particular, $\eta^W_g(\tau)=\prod_{b>0}\eta(b\tau)^{2v_b(g)}$, where the Frame shapes $\prod_{b>0}b^{v_b(g)}$ are as in Table \ref{tab:res-ava:pig}.

It develops that $H^W_\ell(g)=H^W$ when $\ell$ represents the infinite cusp (see (\ref{eqn:HW})), so that in particular, $H^W_\ell(g)$ is independent of $g$ in this case. 
To compute $H^W$ we first observe that $F^{(-3,1)}=(F^{(-3,1)}_0,F^{(-3,1)}_1)$ satisfies 
\begin{gather}\label{eqn:F310F311}
F^{(-3,1)}_0(\tau)=248+O(q),\quad
F^{(-3,1)}_1(\tau)=2q^{-\frac34}+O(q^{\frac14})
\end{gather} 
(cf.\ (\ref{eqn:int-com:breveF-31})).
Then we note, from (\ref{eqn:breveG1}), that $G_1=(G_{1,0},G_{1,1})$ satisfies 
\begin{gather}\label{eqn:G10G11}
G_{1,0}(\tau)=-\frac1{12}+O(q),\quad
G_{1,1}(\tau)=\frac13q^{\frac34}+O(q^{\frac74}).
\end{gather} 
Now plugging (\ref{eqn:F310F311}--\ref{eqn:G10G11}) into the definition (\ref{eqn:HW}) of $H^W$ we obtain that 
\begin{gather}\label{eqn:res-ava:HW}
H^W=-\frac{248}{12}+\frac23=-20.
\end{gather} 
Since $W_{0,0}$ is $248$-dimensional we have $\sum_{b>0}bv_b(g|W_{0,0})=248$ for all $g\in \Th$, so from the definition 
(\ref{eqn:SQ-BP:etaWg})
we obtain that $\eta^W_g(\tau)=q^{\frac{62}3}(1+O(q))$ as $\Im(\tau)\to \infty$, and thus $T^{(-3,1)}_g(\tau)=q^{-\frac23}+O(q^{\frac13})$ as $\Im(\tau)\to \infty$ (cf.\ (\ref{eqn:SQ-BP:TWg})), in precise agreement with $\widetilde{T}_{{\rm 3C},g}^2$.
Thus the leading terms on both sides of (\ref{eqn:res-ava:Tg31}) agree (and actually agree for all $g\in \Th$).

For an example of the computation of $H^W_\ell(g)$ where $\ell$ does not represent the infinite cusp let $\ell=\lambda(1,0,0)$, in the notation of \S~4.4 of \cite{pmp}, so that $\ell$ represents the cusp at $0$. Proceeding as in loc.\ cit.\ we find that 
\begin{gather}\label{eqn:HWellg}
	H^W_\ell(g)
	:=
	\sum_{r\xmod 2}\sum_{\substack{D\equiv r^2\xmod 4\\D\leq 0}} \check{C}_K(D,r)H_{1}(D,r),
\end{gather}
where for $r\xmod 2$ 
we have
\begin{gather}\label{eqn:checkFKat0}
\check{F}_{K,r}=\sum_{i\xmod N}\check{F}_{i,0,r},
\end{gather} 
where $\check{F}_{i,j,r}$ is a shorthand for $\check{F}^W_{g,i,j,r}$ (cf.\ (\ref{eqn:FWgntocheckFWg})).
(In particular, $K$ is a copy of $\sqrt{2}\ZZ$ in this case.)

Now let  
$g$ be an element of order $2$ in the Thompson group, and take $\{F^{W}_{g^n}\}_{n|2}$ as input for the map (\ref{eqn:FWgntocheckFWg}).
Then from (\ref{eqn:checkFKat0}) and the examples of repackaging given in \S~4.2 of \cite{pmp} we have $\check{F}_K=F^{(-3,1)}-120\theta_1^0$, so that 
the counterpart to (\ref{eqn:F310F311}) is
\begin{gather}\label{eqn:checkFK0checkFK1}
\check{F}_{K,0}(\tau)=128+O(q),\quad
\check{F}_{K,1}(\tau)=2q^{-\frac34}+O(q^{\frac14}).
\end{gather} 
Thus, applying (\ref{eqn:G10G11}) and (\ref{eqn:checkFK0checkFK1}) to the definition (\ref{eqn:HWellg}) of $H^W_\ell(g)$ we find that 
\begin{gather}
H^W_\ell(g)=-\frac{128}{12}+\frac23=-10,
\end{gather} 
so that the leading term in the expansion at $0$ of the Borcherds product $\Psi^W_g$, for $W=W^{(-3,1)}$ and $o(g)=2$, is proportional to $q^{10}$. 

Now the Frame shape of $g$ defined by its action on $W_{0,0}$ is $1^{-8}2^{128}$ according to Table \ref{tab:res-ava:pig}, so $\eta_g^W$contributes a constant times 
$\eta(\tau)^{-256}\eta(2\tau)^{16}=q^{-\frac{28}{3}}(1+O(q))$
to the expansion of $T^{(-3,1)}_g$ at $0$. 
We conclude that the expansion of $T^{(-3,1)}_g$ at $0$ is proportional to $q^{\frac23}(1+O(q))$, and in particular, $T^{(-3,1)}_g$ does not have a pole at $0$.
It follows that (\ref{eqn:res-ava:Tg31}) holds when $o(g)=2$. 
 
The computation of the other generalized class numbers $H^W_\ell(g)$ is similar, and a case-by-case check concludes the proof.
\end{proof}

\begin{table}\begin{center}
\begin{small}
\caption{\label{tab:res-ava:pig} {The Frame shapes $\pi(g)=\pi(g|{\bf 248})$.}}
\begin{tabular}{cc}\toprule
$[g]$ & $\pi(g)$  \\\midrule  
$1\mathrm{A}$ & $1^{248}$  \\
$2\mathrm{A}$ & $2^{128}/1^{8}$  \\
$3\mathrm{A}$ & $3^{78} 1^{14}$ \\
$3\mathrm{B}$ & $3^{81} 1^{5}$  \\
$3\mathrm{C}$ & $3^{84}/1^{4}$  \\
$4\mathrm{A}$ & $4^{64} 1^8/ 2^{8}$  \\
$4\mathrm{B}$ & $ 4^{64}/2^4$  \\
$5\mathrm{A}$ & $5^{50}/1^{2}$  \\
$6\mathrm{A}$ & $6^{44} 1^4/3^{4}2^{4}$  \\
$6\mathrm{B}$ & $6^{40} 2^8 /3^{2} 1^{2}$  \\
$6\mathrm{C}$ & $ 6^{42}2^2 1^1/3^{3} $  \\
$7\mathrm{A}$ & $7^{35} 1^3$  \\
$8\mathrm{A}$ & $8^{32}2^4/4^{4}$  \\
$8\mathrm{B}$ & $ 8^{32}/4^{2} $  \\
$9\mathrm{A}$ &  $9^{27} 1^5$                        \\
$9\mathrm{B}$ &   $9^{27}3^3/1^4$                        \\
$9\mathrm{C}$ &   $9^{27} 3^1 1^2$                       \\
$10\mathrm{A}$ &     $10^{26} 1^2/5^{2}2^{2}  $                    \\
$12\mathrm{A}\mathrm{B}$ & $12^{20} 4^4 3^2 1^2/6^{2} 2^2  $                      \\
\bottomrule
\end{tabular}
\begin{tabular}{cc}\toprule
$[g]$ & $\pi(g)$  \\\midrule  
$12\mathrm{C}$ &     $12^{21}4^1 3^3 2^1/6^{3} 1^1$                  \\
$12\mathrm{D}$ &     $12^{22} 2^2/6^2 4^2    $                 \\
$13\mathrm{A}$ &   $13^{19} 1^1$             \\
$14\mathrm{A}$ &   $14^{18}2^2/7^1 1^1 $               \\
$15\mathrm{A}\mathrm{B}$ &  $15^{17} 1^1/5^1 3^1 $          \\
$18\mathrm{A}$ & $18^{14}2^2 1^1/9^1$                        \\
$18\mathrm{B}$ & $18^{14} 3^1 2^2/9^1 1^2 $                          \\
$19\mathrm{A}$ &  $19^{13} 1^1 $                          \\
$20\mathrm{A}$ &   $20^{13} 2^1 /10^1 4^1 $                       \\
$21\mathrm{A}$ &     $21^{11} 7^2 3^1 $                       \\
$24\mathrm{A}\mathrm{B}$ &    $24^{10} 8^2 6^1 2^1 /12^1 4^1$                       \\
$24\mathrm{C}\mathrm{D}$ &  $ 24^{11} 4^1 / 12^1 8^1$                         \\
$27\mathrm{A}$ &    $27^9 9^1 1^2/3^2$                       \\
$27\mathrm{B}\mathrm{C}$ &   $27^9 9^1/3^1 1^1 $                      \\
$28\mathrm{A}$ &      $28^9 7^1 4^1 1^1/14^1 2^1 $                   \\
$30\mathrm{A}\mathrm{B}$ &    $30^9 5^1 3^1 2^1 / 15^1 10^1 6^1 1^1$                     \\
$31\mathrm{A}\mathrm{B}$ &   $31^{8} $                       \\
$36\mathrm{A}\mathrm{B}\mathrm{C}$ &   $36^7 9^1 4^1 2^1 / 18^1 1^1 $                         \\
$39\mathrm{A}\mathrm{B}$ &   $39^6 13^1 1^1 $                        \\
\bottomrule
\end{tabular}
\end{small}
\end{center}
\end{table}

\begin{rmk}
See Tables \ref{w0PTcoeffs1}--\ref{w0PTcoeffs3} for the low-lying Fourier coefficients of the $T^{(-3,1)}_g$, and see Tables \ref{w0PTdecomps1}--\ref{w0PTdecomps8} for decompositions of the low-lying spaces $V^{(-3,1)}_{n}$ into irreducible modules for $\Th$.
\end{rmk}

\begin{rmk}
From the computation (\ref{eqn:res-ava:HW}) of $H=H^W$ for $W=W^{(-3,1)}$, and from the fact that $\eta^W_g=\eta_g^2$ where $\eta_g$ is as in (\ref{eqn:int-com:etag}), we have
\begin{gather}
	T^{(-3,1)}_g(\tau)
	=
	q^{20}\exp\left(-\sum_{n>0}\sum_{k>0}{C}^{(-3,1)}_{g^k}(n^2,n)\frac{q^{nk}}{k}     \right)\eta_g(\tau)^{-2}
\end{gather}
for all $g\in \Th$, where $C^{(-3,1)}_g(D,r)$
is the coefficient of $q^D$ in the Fourier expansion of $F^{(-3,1)}_{g,r}$. 
From the definition (\ref{eqn:breveWD}) we have 
$\breve{C}^{(-3,1)}_g(D) = C^{(-3,1)}_g(D,D)$ for all $D$.
It follows that (\ref{eqn:int-com:T31g}) holds for all $g\in \Th$.
\end{rmk}

Despite the fact that the identity (\ref{eqn:res-ava:Tg31}) holds for $o(g)$ coprime to $3$, the spaces
$V^{(-3,1)}$ 
and 
$(V_{\mathrm{3C}}^\natural)^{\otimes 2}$ 
do not agree as (virtual graded) $\Th$-modules, because
$T^{(-3,1)}_g \neq \widetilde{T}_{\mathrm{3C},g}^2$
for some choices of $g$ whose orders are divisible by 3. For example, the conjugacy class labelled 3A (in the conventions of \cite{atlas}) is the first class for which this happens, as one can check by $q$-expanding both sides to low order (cf.\ Tables \ref{w03Ccoeffs1} and \ref{w0PTcoeffs1}). Therefore, $V^{(-3,1)}$ constitutes a new $\Th$-module in weight zero, which we henceforth refer to as the \emph{weight zero avatar of penumbral Thompson moonshine}. 

Having established that there is a weight zero avatar of penumbral Thompson moonshine that does not agree precisely with $\mathrm{3C}$-generalized Thompson moonshine, it is natural to ask whether or not there is correspondingly a \emph{weight one-half avatar of $\mathrm{3C}$-generalized Thompson moonshine}. That is, is there a $\Th$-module $W_{{\rm 3C}}$ which is related to $V^\natural_{\mathrm{3C}}$ 
by the construction $\SQ$? We answer this question in the affirmative with the following.

\begin{thm}[Weight one-half avatar of 3C-generalized Thompson moonshine]\label{thm:res-ava:wh}
There exists a rational weakly holomorphic $\Th$-module $W_{\rm 3C}$ of weight $\frac12$ and index $1$ such that
$\SQ(W_{\rm 3C})=(\vn_{\rm 3C})^{\otimes 3}$, and such that
\begin{gather}\label{eqn:res-ava:3F31g=2F3Cgplustheta}
3F^{(-3,1)}_{g}= 2F_{{\rm 3C},g}+3\tr (g|{\bf 248})\theta_1^0
\end{gather}
for $3$-regular $g\in \Th$,
where $F_{{\rm 3C},g}:=F^W_g$ for $W=W_{{\rm 3C}}$.
\end{thm}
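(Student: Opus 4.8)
\emph{Plan of proof.} The strategy is to manufacture the module $W_{\mathrm{3C}}$ from its putative McKay--Thompson series, verify the hypotheses of Theorem~\ref{thm:SQ}, and then identify the weight zero module it produces with $(\vn_{\mathrm{3C}})^{\otimes 3}$. \emph{First}, for each $g\in\Th$ I would define the candidate vector-valued form $F_{\mathrm{3C},g}=(F_{\mathrm{3C},g,r})$ of weight $\tfrac12$ and index $1$ as the Rademacher sum (in the sense of \S~\ref{sec:pre-rad}) attached to the genus zero group $\Gamma_g$ governing $\widetilde T_{\mathrm{3C},g}$ (see \S~\ref{app:ser-thz}), with singular datum $\vec\mu$ chosen so that the principal part of $F_{\mathrm{3C},g}$ is the one encoding $3$ times the divisor of $\widetilde T_{\mathrm{3C},g}$ on $\HH$ under the Borcherds correspondence (Item~2 of Theorem~13.3 of \cite{Borcherds:1996uda}, as specialized in \S~4.3 of \cite{pmp}). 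By the standard properties of Rademacher sums recalled in \S~\ref{sec:pre-rad}, these functions lie in the requisite spaces at the appropriate levels and characters, so that the compatibility condition (\ref{eqn:SQ-BP:FWgnVwh}) holds; the explicit specifications, together with the vanishing of the constant term in the $r=0$ component (equivalently $(W_{\mathrm{3C}})_{0,0}=0$) and the value $\breve C_{\mathrm{3C}}(-3)=3$, would be recorded in \S~\ref{app:ser-thm}.

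\emph{Second}, I would check that the Fourier coefficients $\breve C_{\mathrm{3C},g}(D)$ of $\breve F_{\mathrm{3C},g}$ are rational integers and that, for each $D$, the map $g\mapsto\breve C_{\mathrm{3C},g}(D)$ is the character of a virtual $\Th$-module, by decomposing the graded pieces $(\breve W_{\mathrm{3C}})_D$ (cf.\ (\ref{eqn:breveW3C})--(\ref{eqn:breveF3Cg})) into irreducible $\Th$-modules and confirming integrality of the multiplicities; since for $-D$ large these multiplicities are dictated by the Rademacher expansion, only finitely many $D$ require direct verification, and the decompositions are tabulated in the appendices. Granting this, $W_{\mathrm{3C}}$, the preimage of $\breve W_{\mathrm{3C}}$ under the map (\ref{eqn:WtobreveW})--(\ref{eqn:breveWD}) (an isomorphism since $m=1$), is a rational weakly holomorphic $\Th$-module of weight $\tfrac12$ and index $1$; so Theorem~\ref{thm:SQ} produces $V:=\SQ(W_{\mathrm{3C}})$ with $f^V_g=T^{W_{\mathrm{3C}}}_g=\Psi^{W_{\mathrm{3C}}}_g/\eta^{W_{\mathrm{3C}}}_g$.

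\emph{Third}, and this is the main point, I would prove $T^{W_{\mathrm{3C}}}_g=\widetilde T_{\mathrm{3C},g}^3$ for every $g\in\Th$, which is exactly the assertion $\SQ(W_{\mathrm{3C}})=(\vn_{\mathrm{3C}})^{\otimes 3}$ (and also yields the product formula (\ref{twined_th_product})). Since $(W_{\mathrm{3C}})_{0,0}=0$ we have $\eta^{W_{\mathrm{3C}}}_g\equiv 1$ and $T^{W_{\mathrm{3C}}}_g=\Psi^{W_{\mathrm{3C}}}_g$, and plugging $\breve C_{\mathrm{3C}}(0)=0$, $\breve C_{\mathrm{3C}}(-3)=3$ and $H_1(-3,-3)=\tfrac13$ into (\ref{eqn:HW}) gives $H^{W_{\mathrm{3C}}}=1$, matching the leading term $q^{-1}$ of $\widetilde T_{\mathrm{3C},g}^3$. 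Since $\widetilde T_{\mathrm{3C},g}^3$ is an explicit weakly holomorphic modular form of weight $0$ on the genus zero group $\Gamma_g$ (see \S~\ref{app:ser-thz}), it then suffices to match divisors: the divisor of $\Psi^{W_{\mathrm{3C}}}_g$ on $\HH$, read off from the principal part of the repackaged form $\check F^{W_{\mathrm{3C}}}_g$ via Theorem~13.3 of \cite{Borcherds:1996uda}, equals $3\,\mathrm{div}(\widetilde T_{\mathrm{3C},g})|_{\HH}$ by the choice of $\vec\mu$ in the first step; and the orders at the remaining cusps follow from the generalized class numbers $H^{W_{\mathrm{3C}}}_\ell(g)$, computed precisely as in the proof of Theorem~\ref{thm:res-ava:wz} (following \S~4.3--4.4 of \cite{pmp}). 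A case-by-case check over the conjugacy classes of $\Th$ completes this step.

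\emph{Finally}, for $3$-regular $g$ the relation (\ref{eqn:res-ava:3F31g=2F3Cgplustheta}) follows from a comparison of principal parts together with finitely many Fourier coefficients: by Theorem~\ref{thm:res-ava:wz} the principal part of $F^{(-3,1)}_g$ corresponds to $2\,\mathrm{div}(\widetilde T_{\mathrm{3C},g})|_{\HH}$ and by the third step that of $F_{\mathrm{3C},g}$ corresponds to $3\,\mathrm{div}(\widetilde T_{\mathrm{3C},g})|_{\HH}$, so the principal parts of $3F^{(-3,1)}_g$ and $2F_{\mathrm{3C},g}$ both match $6\,\mathrm{div}(\widetilde T_{\mathrm{3C},g})|_{\HH}$, while $\theta_1^0$ has no principal part; hence $3F^{(-3,1)}_g-2F_{\mathrm{3C},g}-3\tr(g|{\bf 248})\theta_1^0$ is a holomorphic weight $\tfrac12$ form, which by the Serre--Stark-type statement of \S~\ref{sec:pre-mod} is a linear combination of theta series $\theta(k^2\tau)$, and finitely many low-order coefficients force it to vanish. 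The main obstacle is the third step: one must determine the cusp behaviour of every Borcherds product $\Psi^{W_{\mathrm{3C}}}_g$, via the generalized class numbers $H^{W_{\mathrm{3C}}}_\ell(g)$, and match it against the known divisor of $\widetilde T_{\mathrm{3C},g}^3$ for all conjugacy classes of $\Th$, in particular the $3$-singular ones, where the cube is precisely the smallest power of $\widetilde T_{\mathrm{3C},g}$ admitting such a description and hence there is no slack; establishing the integrality in the second step (that the Rademacher-sum candidates really do organize into a $\Th$-module) is a further substantial, if more routine, computation.
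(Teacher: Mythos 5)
Your overall architecture coincides with the paper's: the $F_{{\rm 3C},g}$ are indeed specified as Rademacher sums with $\vec\mu=(0,-\tfrac34)$ plus theta corrections (with groups and characters as in Table \ref{tab:mts-thm:mod}, and the order-$27$ classes handled instead by their singular parts at cusps); the identity $\SQ(W_{\rm 3C})=(\vn_{\rm 3C})^{\otimes 3}$ is obtained, as you propose, by verifying (\ref{eqn:W3C}) through exactly the same leading-term-plus-divisor comparison used for Theorem \ref{thm:res-ava:wz} (your computations $H^{W}=3\cdot\tfrac13=1$, $\eta^W_g\equiv 1$ since $(W_{\rm 3C})_{0,0}=0$, interior divisors via Theorem 13.3 of \cite{Borcherds:1996uda} applied to $\check F^W_g$, cusps via the $H^W_\ell(g)$, then a case-by-case check); and your Serre--Stark argument for (\ref{eqn:res-ava:3F31g=2F3Cgplustheta}) is a slightly more explicit version of the paper's finite case-by-case check (note you should also record that the multiplier data of $F^{(-3,1)}_g$ and $F_{{\rm 3C},g}$ agree for $3$-regular $g$, as Table \ref{tab:mts-thm:mod} shows, so that the difference lies in a single space of weight $\tfrac12$ forms).

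The genuine gap is in your second step, where you assert that the virtual-module property of the candidate coefficients requires only finitely many values of $D$ to be checked ``since for large $D$ the multiplicities are dictated by the Rademacher expansion.'' The Rademacher expansion is simply the definition of the coefficients; it does not deliver integrality of the multiplicities $\tfrac1{|\Th|}\sum_{g}\overline{\chi_i(g)}\,\breve C_{{\rm 3C},g}(D)$ for all $D$, and tabulating decompositions for small $D$ says nothing about the infinitely many remaining integrality conditions. The device that makes a finite verification sufficient is missing from your plan and is exactly what the paper invokes: Thompson's reformulation of Brauer's characterization of virtual characters \cite{MR822245} converts the virtual-character condition into finitely many congruences among the Fourier coefficients of the $F_{{\rm 3C},g}$ (the congruences of \cite{Griffin2016}), i.e.\ into congruences between modular forms of fixed weight and level, and Sturm's theorem \cite{MR894516} then guarantees these hold identically once verified up to an explicit bound (here $O(q^{1920})$), a computation the paper carries out by machine. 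Without the Brauer--Thompson criterion together with Sturm's bound (or an equivalent mechanism), your reduction to a finite check does not go through, and this is the substantive content of the existence assertion in the theorem.
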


\begin{proof}
The $\Th$-module structure on $W_{\rm 3C}$ is determined by the associated McKay--Thompson series $F_{{\rm 3C},g}$, which are in turn specified concretely in \S~\ref{app:ser-thm}. 
Given these specifications we confirm that they define a virtual graded $\Th$-module by applying Thompson's reformulation of  Brauer's characterization of virtual characters (see \cite{MR822245}), and Sturm's theorem from \cite{MR894516}. The former result reduces the proof to the verification of certain congruences amongst the coefficients of the McKay--Thompson series, $F_{{\rm 3C}, g}$. The latter result reduces this to a finite check. The implementation of this idea is explained in \S~3.3 of \cite{pmo}, and we refer the reader there for more detail. Sufficient congruences for the case of the Thompson group can be found in \cite{Griffin2016}, and Sturm's result tells us that it is enough to check these congruences up to $O(q^{1920})$ (cf.\ Table 6 of \cite{pmo}). We performed this check using \cite{PARI2} and \cite{SAGE}.

Having established the existence of the virtual graded $\Th$-module $W_{\rm 3C}$ with associated McKay--Thompson series $F_{{\rm 3C},g}$, 
we obtain the identity $\SQ(W_{\rm 3C})=(\vn_{\rm 3C})^{\otimes 3}$ by checking that
\begin{align}\label{eqn:W3C}
q^{-1}\exp\left( -\sum_{n>0} \sum_{k>0} C_{\mathrm{3C},g^k}(n^2,n)\frac{q^{nk}}{k}      \right)
= 
\widetilde{T}_{\mathrm{3C},g}(\tau)^3 
\end{align}
for all $g\in\Th$. This follows from a directly similar argument to that presented for the proof of Theorem \ref{thm:res-ava:wz}.

Finally, the verification of (\ref{eqn:res-ava:3F31g=2F3Cgplustheta}) is a case-by-case check using the specifications in \S~\ref{app:ser-thm}.
\end{proof}

\begin{rmk} 
Note that $W_{{\rm 3C}}$ is the preimage under (\ref{eqn:WtobreveW}) of the $\Th$-module $\breve{W}_{\mathrm{3C}}$ of (\ref{eqn:breveW3C}). 
See Tables \ref{whalf3Ccoeffs1}--\ref{whalf3Ccoeffs3} for the low-lying Fourier coefficients of the $F_{{\rm 3C},g}$, and see Tables \ref{whalf3Cdecomps1}--\ref{whalf3Cdecomps4} for the decompositions of the $(W_{{\rm 3C}})_{r,\frac{D}{4}}$, for small values of $D$, into irreducible modules for $\Th$. 
\end{rmk}

\begin{rmk}
In \S~\ref{app:ser-thm} we report Rademacher expressions for every McKay--Thompson series $F_{{\rm 3C},g}$, except for those with $o(g)=27$. Although Rademacher expressions exist for these cases, we find it more convenient to characterize the corresponding series 
by specifying the singular parts of the Fourier expansions of the corresponding scalar-valued functions (i.e.\ the $\breve{F}_{\mathrm{3C},g}$) about every cusp. 
This data is reported in Table \ref{tab:mts-thm:sngF3C} (see also Table \ref{tab:mts-thm:sngF31}). 
Taking this together with their multiplier systems (see Table \ref{tab:mts-thm:mod}) and the low-lying coefficients in their Fourier expansions (see \S~\ref{app:cff}), 
we completely determine the $F_{{\rm 3C},g}$.
\end{rmk}

In total, we arrive at four avatars of Thompson moonshine, as summarized in Figure \ref{fouravatars}. 
We now discuss some similarities and differences between 
these avatars. 
We begin by comparing the weight zero incarnations 
$V^{(-3,1)}$
and 
$(V_{\mathrm{3C}}^\natural)^{\otimes 2}$.
As we have mentioned, their McKay--Thompson series are closely related in that they agree on 3-regular elements of $\Th$ (see (\ref{eqn:res-ava:Tg31})).
This suggests that the modules may have a common origin in characteristic $3$ (cf.\ modular moonshine \cite{ryba1996modular,borcherds1996modular,borcherds1998modular}, and also \S~3.5 of \cite{Carnahan2012}). One difference between the two modules is that while $V_{\mathrm{3C}}^\natural$ decomposes into honest modules for $\Th$, the components of $V^{(-3,1)}$ are generally virtual. However, the virtual-ness is highly structured: The multiplicity of any given irreducible module for $\Th$ always appears with the same sign across the entire module. In particular, the irreducible representations $V_i$ for $i=4,5,14,15,27,28,29,30,34,39,45,47$ always appear with non-positive multiplicity, whereas the rest appear with non-negative multiplicity. Those that appear with non-positive multiplicity are intriguingly characterized\footnote{We thank Scott Carnahan for this observation.} as the irreducible representations on which the 3A conjugacy class takes a negative value, $\chi_i(\mathrm{3A})<0$.

Next we compare the weight one-half avatars, $W^{(-3,1)}$ and $W_{{\rm 3C}}$. One obvious similarity is that the McKay--Thompson series of both modules belong to the same kinds of spaces, and in some cases---and in particular for $3$-regular elements---they are identical up to holomorphic theta functions and an overall multiplicative factor (see (\ref{eqn:res-ava:3F31g=2F3Cgplustheta})). 
However, in general the series of $W^{(-3,1)}$ transform with different characters 
to those of $W_{{\rm 3C}}$ (cf.\ Table \ref{tab:mts-thm:mod}). Moreover, as can be seen by examining the Rademacher sum expressions in \S~\ref{app:ser-thm}, the module $W_{{\rm 3C}}$ gives rise to functions with poles at cusps inequivalent to $i\infty$ under the action of $\Gamma_0(o(g))$. This contrasts with the more easily characterized functions of penumbral Thompson moonshine, which only admit poles at cusps represented by infinity. 

Using the McKay--Thompson series $F_{{\rm 3C},g}$ to decompose the graded components $(W_{{\rm 3C}})_{r,\frac{D}{4}}$ into irreducible modules for $\Th$ reveals that they are virtual for small values of $D$, as can be seen by inspecting Tables \ref{whalf3Cdecomps1}--\ref{whalf3Cdecomps4}. On the other hand, all graded components $W^{(-3,1)}_{r,\frac{D}{4}}$ of penumbral Thompson moonshine are honest modules for $\Th$, up to an overall sign. The virtual-ness of $W_{{\rm 3C}}$ is of a different nature than that of $V^{(-3,1)}$. Namely, the components $(W_{{\rm 3C}})_{r,\frac{D}{4}}$ become honest $\Th$-modules for sufficiently large $D$, whereas $V^{(-3,1)}_{n}$ is a virtual $\Th$-module for all but finitely many $n$.

Both modules in weight one-half are number theoretically interesting, but in different ways. For example, the penumbral Thompson moonshine module $W^{(-3,1)}$ possesses a discriminant property, which relates the number fields over which the $\Th$-module $W^{(-3,1)}_{r,\frac{D}{4}}$ is defined to the number theoretic properties of the discriminant $D$ (cf.\ \cite{Harvey:2015mca} for details, or \cite{pmo} for a description of the discriminant property in penumbral moonshine more generally). Although the 3C-generalized Thompson module appears not to possess any such property, it is linked via a physically and algebraically suggestive construction (\ref{eqn:SQ-BP:SQW}) to generalized monstrous moonshine in weight zero, and the procedure of inverting this construction, it turns out, leads us to a conjectural interpretation of the $F_{{\rm 3C},g}$ as generating functions for the traces of $\widetilde{T}_{\mathrm{3C},g}^3$ over certain CM points in the upper half-plane. Next, we make this more precise.

\subsection{Traces}\label{sec:res-tra}

As we have established in \cite{pmp} (see Theorem \ref{thm:SQ}), singular theta lifts provide a method for transferring moonshine in weight one-half to moonshine in weight zero. In Theorem \ref{thm:res-ava:wh} we characterized the ``inverse image'' of generalized monstrous moonshine for the Thompson group under this map, but somewhat indirectly. It is worthwhile to ask whether or not this inverse can be obtained more systematically. 

We consider this question in a general way in \S~5.1 of \cite{pmp}, and in particular sketch how 
traces of singular moduli, as introduced by Zagier in \cite{ZagierTSM}, can be applied to it.
Our purpose here is to formulate a concrete proposal in the above mentioned case. 
Specifically, 
we presently offer conjectural expressions for the McKay--Thompson series $\breve{F}_{\mathrm{3C},g}$ as generating functions for traces of the principal moduli $\widetilde{T}_{\mathrm{3C},g}^3$,
for
every $g\in \Th$ that is not in the classes 18B or 27ABC. 

To formulate these expressions let $g\in \Th$ (not in the classes 18B or 27ABC), and let 
$\Gamma_{\mathrm{3C},g}^{(3)}=\Gamma_0(N_g\vert h_g)+S_g$ (cf.\ (\ref{eqn:Gamma0NverthplusS})) be the invariance group of $\widetilde{T}_{\mathrm{3C},g}^3$ as specified in Table \ref{tab:mts-T3C}. 
Define an auxiliary group 
\begin{gather}\label{eqn:res-tra:hatGammag}
\hat{\Gamma}_{g}:= \Gamma_0(N_gh_g)+\hat{S}_g
\end{gather} 
by letting $\hat{S}_g$ be 
the preimage of $S_g$ under the map $\Ex_{N_g|h_g}\to \Ex_{N_g/h_g}$ described in \S~\ref{sec:pre-fuc} (cf.\ (\ref{eqn:pre-fuc:WEsubsetwn})).
For example, $\Gamma^{(3)}_{\mathrm{3C},\mathrm{8A}} = 8\vert 2+$, so $\hat{\Gamma}_{\mathrm{8A}} = 16+$ (cf.\ (\ref{eqn:Nverthplusnnprime})). 

Also associate a (generalized) genus character $\chi_g$ to each $g\in \Th$ as follows. 
Take $\chi_g=\chi_{-3}$ as in (\ref{eqn:pre-cmp:chiD0N})
when $g$ is not in the classes $\mathrm{3C}$, $\mathrm{9A}$, $\mathrm{18AB}$, $\mathrm{27ABC}$ or $\mathrm{36ABC}$, 
take $\chi_g=\widetilde{\chi}_{-3}$ as in (\ref{eqn:pre-cmp:widetildechiD03})
when $g$ belongs to $\mathrm{9A}$, $\mathrm{18A}$ or $\mathrm{36ABC}$, 
and define $\chi_g(Q):=\chi_{-3}(Q)-\chi_{-3}(Q|r_3)$ where 
$r_3=\frac1{\sqrt{3}} 
\left(
\begin{smallmatrix}
1&0\\0&3
\end{smallmatrix}
\right)$
in case $g$ is in the class 3C.
Now our conjectural 
expressions for the $\breve{F}_{{\rm 3C},g}$ read as follows, where 
the trace $\Tr_{D_0,\Gamma}(T,D,\chi)$ is as in (\ref{sec:pre-cmp:TrGammachiD0}).

\begin{con}\label{con:res-tra}
For $g$ in $\Th$ not in the classes 
$\mathrm{9B}$, $\mathrm{18B}$ or $\mathrm{27ABC}$ we have
\begin{align}
\begin{split}
&\breve{F}_{\mathrm{3C},g}(\tau) = 3q^{-3}
- 3{\tr (g|\boldsymbol{248})}\frac{\theta(\tau)-1}2
+3\sum_{n>1}\kappa_{g,n^2}
\frac{\theta(n^2\tau)-1}2  \\
&\hspace{1.5in} 
+3
\sum_{D>0}
\frac1{\sqrt{D}}{\Tr_{-3,\hat{\Gamma}_g}(\widetilde{T}_{\mathrm{3C},g}^3,D,\chi_g)} 
q^{D},
\end{split}
\end{align}
where the $\kappa_{g,n^2}$ all vanish except 
$\kappa_{\mathrm{9A},9} = -{9}$, $\kappa_{\mathrm{18A},9}=-{1}$, $\kappa_{\mathrm{36ABC},9}=-{3}$, and $\kappa_{\mathrm{36ABC},36}=4$.
For $g$ in the class $\mathrm{9B}$ we have 
\begin{align}
\begin{split}
&\breve{F}_{\mathrm{3C},\mathrm{9B}}(\tau)= 3q^{-3} 
-3{\tr (g|\boldsymbol{248})}\frac{\theta(\tau)-1}2\\
& \hspace{1in} 
+3
\sum_{D>0}
\frac{1}{\sqrt{D}}\left(\sum_{Q\in \mathcal{Q}^{(9)}_{-3D}/\Gamma_0(3)} 
\chi_{-3}(Q)\frac{\widetilde{T}_{\mathrm{3C},g}(\a_Q)^3}{\vert\overline{\Gamma_0(3)}_Q\vert}  \right)q^{D}.
\end{split}
\end{align}
\end{con}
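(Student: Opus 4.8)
The plan to establish Conjecture \ref{con:res-tra} is to show that, for each admissible $g$, the two sides of the asserted identity are weakly holomorphic modular forms of weight $\frac12$ lying in the same Kohnen plus space $M^{\wh,+}_{\frac12,1}(N,\chi)$---for the level $N$ and character $\chi$ attached to $\breve F_{\mathrm{3C},g}$ in Table \ref{tab:mts-thm:mod}---with the same singular parts at every cusp, and then to pin down the remaining finite-dimensional ambiguity. By the discussion at the end of \S~\ref{sec:pre-mod}, which rests on \cite{serrestark}, such a form is determined by its poles together with a linear combination of the theta series $\{\theta(k^2\tau)\}$; this is exactly the shape of the right-hand side of the conjectured formulae, where the genuine trace generating function $3\sum_{D>0}D^{-1/2}\Tr_{-3,\hat\Gamma_g}(\widetilde T_{\mathrm{3C},g}^3,D,\chi_g)q^D$ is accompanied by the explicit polar term $3q^{-3}$ and by the theta corrections $-3\tr(g|\boldsymbol{248})\tfrac{\theta(\tau)-1}2$ and $3\sum_{n>1}\kappa_{g,n^2}\tfrac{\theta(n^2\tau)-1}2$. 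So the proof would reduce to three steps: (i) establish that the trace generating function is such a modular form; (ii) match the singular parts; (iii) fix the theta corrections by a finite computation.

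For (i) and (ii) I would proceed through the singular theta lift, using the fact---developed in \S~5 of \cite{pmp}---that taking traces of singular moduli is adjoint to the Borcherds-product construction $\SQ$. Concretely, $\widetilde T_{\mathrm{3C},g}^3$ is by Table \ref{tab:mts-T3C} a weakly holomorphic modular form of weight $0$ for the $n\vert h$-type group $\Gamma_{\mathrm{3C},g}^{(3)}$, hence for the auxiliary group $\hat\Gamma_g$ of (\ref{eqn:res-tra:hatGammag}); applying the $O_{2,1}$-type twisted theta lift of \cite{Borcherds:1996uda}, the twist being by the fundamental discriminant $D_0=-3$ and implemented through the genus character $\chi_g$, produces a weight $\frac12$ form whose positive-index Fourier coefficients are the twisted traces $\Tr_{-3,\hat\Gamma_g}(\widetilde T_{\mathrm{3C},g}^3,D,\chi_g)$ and whose non-positive-index coefficients are governed by the constant terms of $\widetilde T_{\mathrm{3C},g}^3$ at the cusps and by Hurwitz class-number data (cf.\ (\ref{eqn:breveG1})). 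Transporting this from $\hat\Gamma_g$ to the claimed plus-space level uses the correspondence $E\mapsto E/h_E^2$ of \S~\ref{sec:pre-fuc} between the Atkin--Lehner data of $\Gamma_0(N_gh_g)$ and of $\Gamma_0(N_g\vert h_g)$, together with the maps $\iota_{4m}$ of (\ref{eqn:iotak}) and $F\mapsto\breve F$ of (\ref{eqn:prebreveF}--\ref{eqn:breveF}). Because the lift is adjoint to $\SQ$, its principal part is read off from the divisor of $\widetilde T_{\mathrm{3C},g}^3$, which is precisely the computation carried out in the proof of Theorem \ref{thm:res-ava:wh} using Theorem~13.3 of \cite{Borcherds:1996uda} and the generalized class numbers $H^W_\ell(g)$; this matches the singular parts of $\breve F_{\mathrm{3C},g}$ recorded in Table \ref{tab:mts-thm:sngF3C}, giving (ii). The class $\mathrm{9B}$ is handled separately because $\widetilde T_{\mathrm{3C},\mathrm{9B}}^3$ has invariance group with $h=3$, so the relevant forms live in $\mathcal{Q}^{(9)}_{-3D}/\Gamma_0(3)$ and the untwisted character $\chi_{-3}$ is the correct one; the underlying mechanism is the same.

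For (iii), once (i) and (ii) are in place the difference between $\breve F_{\mathrm{3C},g}$ and the trace generating function lies in the holomorphic plus space $M^+_{\frac12,1}(N,\chi)$, hence---again by \cite{serrestark}---is a linear combination of theta series $\theta(k^2\tau)$ with $k^2$ dividing the relevant level. Since each $\theta(k^2\tau)=\sum_n q^{k^2n^2}$ is supported on exponents that are perfect-square multiples, this correction affects no coefficient at a square-free $D$, so one determines it by computing the coefficients $\breve C_{\mathrm{3C},g}(D)$ at $D=1,4,9,\dots$ up to the Sturm bound and subtracting the corresponding regularized trace values; this is where the sporadic constants $\kappa_{\mathrm{9A},9}=-9$, $\kappa_{\mathrm{18A},9}=-1$, $\kappa_{\mathrm{36ABC},9}=-3$, $\kappa_{\mathrm{36ABC},36}=4$ emerge, and the coefficient $-3\tr(g|\boldsymbol{248})$ at $q^{n^2}$ reflects the Borcherds-product shape of $\widetilde T_{\mathrm{3C},g}^3$. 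For $3$-regular $g$ this last step is immediate from (\ref{eqn:res-ava:3F31g=2F3Cgplustheta}) and the known penumbral data; for $3$-singular $g$ it is the case-by-case check that the statement already anticipates.

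The hard part is step (i): making rigorous, in the required generality, the claim that the twisted trace generating function of the \emph{weakly holomorphic}---as opposed to holomorphic or cuspidal---weight $0$ form $\widetilde T_{\mathrm{3C},g}^3$ is an honest weakly holomorphic, rather than merely harmonic or mock modular, form of weight $\frac12$, with exactly the plus-space membership and character $\chi_g$ claimed, and with the correct behavior at square discriminants. The twisted theta lift over $n\vert h$-type groups carrying an Atkin--Lehner group and a genus-character twist by $-3$ is not quite available off the shelf; one would need to extend the framework of \cite{Borcherds:1996uda} and \S~5 of \cite{pmp}, or assemble it from results of the Duke--Imamoglu--Toth, Bruinier--Ono and Alfes--Ehlen type on twisted traces and regularized theta lifts. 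Part of the same difficulty is verifying that the shadow of the lift vanishes, so that one remains in the weakly holomorphic world; this is the adjoint of the role played by the generalized class number $H^W$ and the data (\ref{eqn:breveG1}) in the $\SQ$ direction, and it requires genuine care. By contrast, $\Th$-equivariance is not an additional obstacle here, since the existence of $\breve W_{\mathrm{3C}}$ as a virtual graded $\Th$-module is already secured by Theorem \ref{thm:res-ava:wh}; the conjecture is therefore a statement about individual functions, one $g$ at a time.
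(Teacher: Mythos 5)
You should be aware that the paper does not prove this statement at all: it is stated as Conjecture \ref{con:res-tra}, supported only by numerical verification of the first few coefficients and by the remark that the $g=e$ case can be deduced from known results (Theorems 1.1 and 1.2 of \cite{BringmannOno2007}, or Corollary 1.7 of \cite{MillerPixton}). So there is no proof in the paper to which your argument could be ``essentially the same,'' and any complete argument you supplied would go strictly beyond the paper. Your proposal, however, is not such an argument: it is a strategy whose decisive step is exactly the one you concede is open. Step (i) --- that the twisted trace generating function $\sum_{D>0}D^{-1/2}\Tr_{-3,\hat{\Gamma}_g}(\widetilde{T}_{\mathrm{3C},g}^3,D,\chi_g)\,q^D$, built from a \emph{weakly holomorphic} weight $0$ form for an $n\vert h$-type Atkin--Lehner group, completed by the stated polar and theta terms, is an honest weakly holomorphic weight $\frac12$ form in the precise plus space and with the precise multiplier of $\breve{F}_{\mathrm{3C},g}$ --- is the entire content of the conjecture for square-free $D$, and no existing twisted-lift theorem covers it: the groups $\hat{\Gamma}_g$ are obtained by discarding the Atkin--Lehner cosets attached to exact divisors divisible by $3$ (a choice the paper itself calls mysterious), the character $\chi_g$ is in several classes the nonstandard twist $\widetilde{\chi}_{-3}$ of (\ref{eqn:pre-cmp:widetildechiD03}) rather than a genus character in the sense of \cite{GKZ87}, and in the 3C case $\chi_g$ is a difference $\chi_{-3}(Q)-\chi_{-3}(Q|r_3)$; verifying vanishing of the shadow and the cusp expansions of the lift in this generality is precisely what would have to be, and has not been, done.

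Two further points in your steps (ii)--(iii) are also not reductions but restatements of the difficulty. The singular parts computed in the proof of Theorem \ref{thm:res-ava:wh} (via Theorem 13.3 of \cite{Borcherds:1996uda} and the $H^W_\ell(g)$) are singular parts of $\breve{F}_{\mathrm{3C},g}$, not of the putative lift; matching them requires already knowing the cusp behavior of the trace generating function, which is part of the unproven step (i). And once the difference is known to lie in $M^+_{\frac12,1}(N,\chi)$, appealing to \cite{serrestark} only tells you it is a combination of $\theta(k^2\tau)$; extracting the constants $\kappa_{g,n^2}$ requires evaluating the regularized traces at square discriminants $D=n^2$ (where imprimitive forms and the stabilizer weights $|\overline{\Gamma}_Q|$ enter), a computation you do not carry out and which the paper also does not supply. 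In short: the statement remains a conjecture, your outline is a plausible roadmap consistent with the TSM/$\SQ$ adjunction discussed in \S~5 of \cite{pmp}, but it does not constitute a proof, and its gap is not a technicality but the heart of the matter.
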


We take Conjecture \ref{con:res-tra} as evidence that the Thompson group is not only organizing the Fourier coefficients of the $\widetilde{T}_{\mathrm{3C},g}^3$, but also their traces over distinguished divisors in the upper half-plane.

\begin{rmk}
The need to pass from $\Gamma^{(3)}_{\mathrm{3C},g}$ to the subgroup $\hat\Gamma_g$ of (\ref{eqn:res-tra:hatGammag}) stems from the fact that the former doesn't always act naturally on the requisite spaces of binary quadratic forms, while the latter does. However, in this regard, $\hat{\Gamma}_g$ is smaller than might be expected because we have removed Atkin--Lehner cosets associated to exact divisors which are divisible by 3, in spite of the fact that they do act sensibly on the relevant sets of binary quadratic forms. The reason we need to do this is somewhat mysterious. 
\end{rmk}

\begin{rmk}
The space $\mc{Q}^{(9)}_{D}$ is not generally invariant under the action of $\Gamma_0(3)$, but is so if $D$ is divisible by $3$. 
\end{rmk}

Although the problem of constructing higher level analogs of traces of singular moduli has been addressed in the literature (see e.g.\ \cite{MillerPixton,kim_2006,CHOI2008700,beneish_larson}), we are not aware of any results that fully encapsulate our formulae. 
We have verified the first few terms of each of the formulae of Conjecture \ref{con:res-tra} numerically, and we note that the $g=e$ case can be deduced from known results (see e.g.\ Theorems 1.1 and 1.2 of \cite{BringmannOno2007}, or Corollary 1.7 of \cite{MillerPixton}).

\subsection{Generalizations}\label{sec:res-gen}

One naturally wonders whether or not there are other instances of moonshine in weight one-half that are related to generalized monstrous moonshine in weight zero via Borcherds products and traces of singular moduli, in the manner explored in \S\S~\ref{sec:res-pro}--\ref{sec:res-tra}. 
With this in mind we recall from \cite{pmo} that the weight one-half Thompson moonshine of \cite{Harvey:2015mca} generalizes to a family of related phenomena which was given the name penumbral moonshine. 
Could other instances of penumbral moonshine possess generalized monstrous moonshine avatars, and thus extend our observations about the Thompson group? 

It is indeed our expectation that the story we have told for the Thompson group will generalize. To explain how, first recall that instances of penumbral moonshine are labeled by ``lambdencies'' $\pd=(D_0,\ell)$. In the present context, we will stick to cases for which $D_0$ is a negative fundamental discriminant and 
\begin{gather}\label{eqn:ell}
\ell=m+n,n',\dots
\end{gather}
denotes a genus zero Fricke Atkin--Lehner group with square-free level $m$. (The discriminant $D_0$ should further be ``$\ell$-admissible'' which, among other things, means that it should be a square modulo $4m$; see \cite{pmo} for further details.) Penumbral moonshine then assigns (in at least the cases that $D_0=-3$ or $D_0=-4$) a finite group $G^{(\pd)}$ and a collection of 
vector-valued modular forms $F^{(\pd)}_g=(F^{(\pd)}_{g,r})$ 
of weight $\frac12$
which belong to spaces of the shape $V^{\wh,\alpha}_{\frac12,m}(N,\chi)$ (see \S~\ref{sec:pre-mod}), and which satisfy
\begin{align}\label{eqn:breveFuplambdag}
\breve{F}^{(\pd)}_g(\tau)
:= \sum_{r\xmod 2m}F^{(\pd)}_{g,r}(4m\tau) 
=Cq^{D_0}+O(1)
\end{align}
(cf.\ (\ref{eqn:prebreveF}--\ref{eqn:breveF})) for some constant $C=C^{(\pd)}$. 
The exact divisors $n,n',\dots$ in (\ref{eqn:ell}) instruct us to take the character $\alpha:O_m\to\mathbb{C}^\ast$ to be the unique one whose kernel is $\ker\alpha = \{a(1),a(n),a(n'),\dots\}$, where $a(n)$ is the unique $a\xmod 2m$ such that $a\equiv -1\xmod 2n$ and $a\equiv 1 \xmod \frac{2m}{n}$. (Note that the assignment $n\mapsto a(n)$ furnishes an isomorphism between $\Ex_m$ and $O_m$.) These modular forms $F^{(\pd)}_g$ are interpreted as the McKay--Thompson series of a weakly holomorphic virtual $G^{(\pd)}$-module $W^{(\pd)}$ of weight $\frac12$ and index $m$, where $m$ is as in (\ref{eqn:ell}). 

The case of penumbral Thompson moonshine corresponds to the lambdency $\pd = (-3,1)$, and we have seen that it admits a generalized monstrous moonshine avatar, $W_{\rm 3C}$ (cf.\ Theorem \ref{thm:res-ava:wh}). The question we would like to ask is if penumbral lambdencies beyond $\pd=(-3,1)$ support generalized monstrous moonshine avatars as well. 
Let us first consider some cases with $D_0=-3$. In Table \ref{tab:penumbralgroupsD0m3} we associate, to each lambdency 
\begin{gather}\label{eqn:pd3}
\pd=(-3,m+n,n',\dots)
\end{gather} 
for which $m$ is 3-regular, a conjugacy class $h=h^{(\pd)}$ of the monster which has order $3m$. For each $\pd$ in Table \ref{tab:penumbralgroupsD0m3} define 
$F_{h} := aF^{(\pd)}+b\theta_m^0$,
for $h=h^{(\pd)}$, where $a$ and $b$ are chosen so that 
\begin{align}\label{eqn:breveFhuplambda}
\breve{F}_{h}(\tau) := \sum_{r\xmod 2m} F_{h,r}(4m\tau)= 3q^{-3}+0+O(q).
\end{align}
We claim that an application of Theorem \ref{thm:SQ} to the rational weakly holomorphic $G$-module $W_h$ of weight $\frac12$ and index $m$ for $G=\{e\}$ the trivial group that is defined by $F_h=(F_{h,r})$ recovers the graded vector space structure on $(\vn_h)^{\otimes 3}$. More concretely, we claim that
\begin{gather}\label{eqn:res-gen:Th3}
q^{-1}\exp\left( -\sum_{n>0} \sum_{k>0} C_{h}(n^2,n)\frac{q^{nk}}{k}      \right)
=
q^{-1}\prod_{n>0}(1-q^n)^{C_h(n^2,n)}
= 
\widetilde{T}_{h,e}(\tau)^3 
\end{gather}
(cf.\ (\ref{eqn:W3C})), for $\Im(\tau)$ sufficiently large, for $C_h(D,r)$ the coefficient of $q^D$ in $F_{h,r}$, 
where $\widetilde{T}_{h,e}(\tau):=T_{h,e}(3\tau)$.

As a consequence of Carnahan's proof 
of generalized monstrous moonshine, $\vn_h$ admits an action of a finite group $G_{h}$ which is closely related to the centralizer of $h$ in the monster. We find that the penumbral groups $G^{(\pd)}$ and the generalized monstrous moonshine groups $G_{h}$ \emph{precisely} coincide with one another for the $\pd$ and $h$ in Table \ref{tab:penumbralgroupsD0m3}. 
Thus, for each such $\pd$, we speculate that there exists a 
rational weakly holomorphic virtual $G^{(\pd)}$-module $W_h$ of weight $\frac12$ and index $m$ 
such that $\SQ(W_h) = (V_h^\natural)^{\otimes 3}$, in analogy with Theorem \ref{thm:res-ava:wh}.
\begin{table}
\begin{small}
\begin{center} 
\caption{\label{tab:penumbralgroupsD0m3} {Speculative relations between instances of penumbral moonshine with $D_0=-3$ and generalized monstrous moonshine. }}
\begin{tabular}{c | c c c c c }
\toprule
$\pd$ & $(-3,1)$ &  $(-3,7+7)$ & $(-3,13+13)$ & $(-3,19+19)$ & $(-3,31+31)$  \\
$G^{(\pd)}$ & $\Th$ &  $L_2(7)$ & $\ZZ/3\ZZ$ & $\mathds{1}$ & $\mathds{1}$  \\\midrule
$h$& 3C & 21C & 39B & 57A & 93A\\
$G_{h}$& $\Th$ & $L_2(7)$ & $\ZZ/3\ZZ$ & $\mathds{1}$ & $\mathds{1}$ \\
\bottomrule
\end{tabular}
\end{center}
\end{small}
\end{table}

The $D_0=-4$ family may be treated similarly. In Table \ref{tab:penumbralgroupsD0m4} we associate, to each lambdency 
\begin{gather}\label{eqn:pd4}
\pd = (-4,m+n,n',\dots)
\end{gather}
for which $m$ is even (except for $m=34$), a conjugacy class $h$ of the monster which has order $4m$. 
(The reason the case $m=34$ does not admit such an association is that there is no element of the monster group of order $4\times 34 = 136$.) 
For $\pd$ and $h=h^{(\pd)}$ as in Table \ref{tab:penumbralgroupsD0m4} we define $F_{h}=aF^{(\pd)}+b\theta_m^0$ so that 
\begin{align}\label{eqn:breveFh}
\breve{F}_{h}(\tau):= \sum_{r\xmod 2m} F_{h,r}(4m\tau) = 4q^{-4}+0+O(q).
\end{align}
Then we find that 
\begin{gather}\label{eqn:res-gen:Th4}
q^{-1}\exp\left( -\sum_{n>0} \sum_{k>0} C_{h}(n^2,n)\frac{q^{nk}}{k}      \right)
=
q^{-1}\prod_{n>0}(1-q^n)^{C_h(n^2,n)}
= 
\widetilde{T}_{h,e}(\tau)^4, 
\end{gather}
(cf.\ (\ref{eqn:W3C}), (\ref{eqn:res-gen:Th3})), for $\Im(\tau)$ sufficiently large, for $C_h(D,r)$ the coefficient of $q^D$ in $F_{h,r}$ as before, 
but where $\widetilde{T}_{h,e}(\tau):=T_{h,e}(4\tau)$.
This time, we find that the generalized monstrous moonshine group is a subgroup of the penumbral group with index two, i.e.\ $[G^{(\pd)}:G_{h}] = 2$. 
That notwithstanding, we take this and the coincidence (\ref{eqn:res-gen:Th4}) as reasonable evidence that the $D_0=-4$ lambdencies in Table \ref{tab:penumbralgroupsD0m4} will support generalized monstrous moonshine avatars in weight one-half as well.

\begin{table}
\begin{small}
\begin{center} 
\caption{\label{tab:penumbralgroupsD0m4} {Speculative relations between instances of penumbral moonshine with $D_0=-4$ and generalized monstrous moonshine. }}
\begin{tabular}{c | c c c c }
\toprule
$\pd$ & $(-4,2+2)$ &  $(-4,10+2,5,10)$ & $(-4,26+2,13,26)$ & $(-4,34+2,17,34)$   \\
$G^{(\pd)}$ & ${^2}\textsl{F}_4(2)$ &  $\ZZ/5\ZZ:\ZZ/4\ZZ$ & $\ZZ/2\ZZ$ & $\mathds{1}$  \\\midrule
$h$& 8C & 40A & 104A &\\
$G_{h}$& ${^2}\textsl{F}_4(2)'$ & $\ZZ/5\ZZ:\ZZ/2\ZZ$ & $\mathds{1}$ & \\
\bottomrule
\end{tabular}
\end{center}
\end{small}
\end{table}

Theorem \ref{thm:SQ} applies directly to the penumbral moonshine modules $W^{(\pd)}$, producing counterparts $V^{(\pd)}:=\SQ(W^{(\pd)})$ in weight zero. 
A comparison of the $G_h$-module structures on $V^{(\pd)}$ and $\vn_h$, for the $\pd$ and $h$ that appear in Tables \ref{tab:penumbralgroupsD0m3}--\ref{tab:penumbralgroupsD0m4}, should be illuminating.
We expect the challenge of extending (\ref{eqn:res-gen:Th3}) and (\ref{eqn:res-gen:Th4}) to statements about $G_h$-modules to be no less interesting.

It is also interesting to ask whether or not the observations we have offered can be pushed further. For example, there are other $D_0=-3$ and $D_0=-4$ lambdencies with known penumbral modules that we did not mention in the above discussion, and there are also lambdencies with $D_0<-4$ for which the existence or not of corresponding penumbral moonshine is yet to be determined. As a first step in the direction of counterparts to the present work in such cases, we offer a final result. In a nutshell, it computes the Borcherds products, as in (\ref{eqn:res-gen:Th3}) and (\ref{eqn:res-gen:Th4}), associated to a more general class of functions that might arise as graded dimensions of penumbral moonshine modules, and confirms the existence of particular examples for which the corresponding Borcherds product might admit a relation to generalized monstrous moonshine, in a manner similar to what we have explained above.

For the statement of the result we define a generalized class number $H_\Gamma(D_0)$, for $\Gamma$ a group of the form $\Gamma_0(m)+S$, and $D_0$ a negative discriminant that is a square modulo $4m$, by setting

\begin{align}\label{eqn:HGammaD0}
    H_\Gamma(D_0) := \sum_{Q\in\mathcal{Q}^{(m)}_{D_0}/\Gamma} \frac{1}{|\bar{\Gamma}_Q|}.
\end{align}
We also write $T_\Gamma$ for the unique normalized Hauptmodul of $\Gamma$ when $\Gamma$ is genus zero, and write 
$C_F(D,r)$ for the coefficient of $q^{\frac{D}{4m}}$ in the expansion of $F_r$, when $F=(F_r)$ belongs to $\textsl{V}^{\wh}_{\frac12,m}$.

\begin{pro}\label{pro:generalBorPrds}
Let $m$ be a square-free integer such that $\Gamma=\Gamma_0(m)+$ is genus zero 
and
let $D_0$ be a negative discriminant that is a square modulo $4m$. Then there is a unique function $F=(F_r)$
in $\textsl{V}^{\wh,\alpha}_{\frac12,m}$, where $\a$ is the trivial character of $O_m$, such that $\breve{F} = q^{D_0}+0+O(q)$, and such that
\begin{align}\label{eqn:generalBorPrds}
    q^{-H_{\Gamma}(D_0)}\prod_{n>0} (1-q^n)^{C_F(n^2,n)} = \prod_{Q\in\mathcal{Q}^{(m)}_{D_0}/\Gamma}(T_\Gamma(\tau)-T_\Gamma(\a_Q))^{\frac{1}{|\bar{\Gamma}_Q|}}
\end{align}
holds for $\Im(\tau)$ sufficiently large.
Furthermore, for every such $m$ there exists a choice of $D_0$ 
such that the Borcherds product (\ref{eqn:generalBorPrds}) takes the form 
\begin{gather}\label{eqn:generalBorPrds_aomega}
(T_\Gamma+a)^{\frac{1}{\omega}}
\end{gather}
for some integers $a$ and $\omega$.
\end{pro}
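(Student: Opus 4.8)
Looking at Proposition \ref{pro:generalBorPrds}, I need to prove two things: (1) existence and uniqueness of a vector-valued weight $\frac12$ form $F$ with the prescribed principal part and trivial $O_m$-character, satisfying a Borcherds product identity matching a product over $\Gamma$-classes of CM points; and (2) for each genus-zero $\Gamma_0(m)+$, there's a choice of $D_0$ making the product collapse to $(T_\Gamma+a)^{1/\omega}$.

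Let me think about the structure here. We have:
- $m$ square-free, $\Gamma = \Gamma_0(m)+$ (full Atkin-Lehner group) is genus zero
- $D_0$ negative discriminant, square mod $4m$
- Want $F \in V^{\wh,\alpha}_{\frac12,m}$ with $\alpha$ trivial, $\breve{F} = q^{D_0} + 0 + O(q)$
- The Borcherds product formula (\ref{eqn:generalBorPrds})
- And then a choice of $D_0$ giving form (\ref{eqn:generalBorPrds_aomega})

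For existence/uniqueness of $F$: This is a question about weight $\frac12$ weakly holomorphic modular forms in the plus space. The prescribed singular part is $q^{D_0}$ (with $D_0 < 0$ so this is a pole), and we require the constant term to be $0$. By the theory reviewed in \S~\ref{sec:pre-mod}, a modular form in $M^{\wh,+}_{\frac12,1}(N,\chi)$ is determined by singular terms up to theta series. With trivial character $\alpha$ and the Fricke/Atkin-Lehner symmetry, we need enough parameters. Actually the condition $\breve{F} = q^{D_0} + 0 + O(q)$ pins down the pole and the constant term; the remaining ambiguity is by forms with no pole and zero constant term, and since $m$ square-free with trivial $\alpha$, the plus-space forms $\theta(k^2\tau)$ all have nonzero constant term, so requiring constant term $0$ should kill the ambiguity. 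I should connect to Rademacher sums (\S~\ref{sec:pre-rad}) or existence results from \cite{pmo}. Actually, the cleanest approach: Rademacher sum construction produces such an $F$ with the right singular behavior at all cusps (since $\Gamma$ is Fricke genus zero, the pole is "at infinity" in the appropriate sense), and uniqueness follows from the vanishing of the relevant holomorphic space.

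For the Borcherds product identity (\ref{eqn:generalBorPrds}): This should follow from the Borcherds/Harvey-Moore singular theta lift, precisely Theorem 14.1 of \cite{Borcherds:1996uda} (the key theorem relating the lift of a weight $\frac12$ form to a product whose zeros/poles are at CM points). The left side is the Borcherds product $\Psi^W$ attached to $W$ the (trivial-group) module with $F^W = F$, and by the general theory this lift is a meromorphic modular function on $X_\Gamma$ whose divisor is supported on the $\Gamma$-orbits of CM points $\alpha_Q$ for $Q \in \mathcal{Q}^{(m)}_{D_0}/\Gamma$, each with multiplicity related to $1/|\bar\Gamma_Q|$. Since $\Gamma$ is genus zero, any such function is a rational function of $T_\Gamma$, and matching divisors and leading behavior (the $q^{-H_\Gamma(D_0)}$ normalization, which is exactly the generalized class number appearing as the order of the pole/zero at the cusp) forces it to equal $\prod_Q (T_\Gamma - T_\Gamma(\alpha_Q))^{1/|\bar\Gamma_Q|}$. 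The exponent $H_\Gamma(D_0)$ matches $\sum_Q 1/|\bar\Gamma_Q|$ which is consistent with the leading power on the right. The main work is tracking constants and verifying the $q^{D_0}$ normalization of $F$ yields precisely multiplicity-one zeros at the $\alpha_Q$ — this is where I'd invoke \S~4 of \cite{pmp} (the specialization of Borcherds' Theorem 13.3 used already in the proof of Theorem \ref{thm:res-ava:wz}).

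For the final claim (\ref{eqn:generalBorPrds_aomega}): I want a $D_0$ such that $\mathcal{Q}^{(m)}_{D_0}/\Gamma$ is a single orbit (so the product has one factor) and then $(T_\Gamma - T_\Gamma(\alpha_Q))^{1/|\bar\Gamma_Q|}$ has the desired shape with $a = -T_\Gamma(\alpha_Q)$, $\omega = |\bar\Gamma_Q|$ — provided $T_\Gamma(\alpha_Q)$ is an integer. The natural candidate: take $D_0$ to be (a multiple of $m$ times) a discriminant of class number one relative to $\Gamma$, e.g. the discriminant of the ring of integers of $\mathbb{Q}(\sqrt{D_0})$ chosen so that $\mathcal{Q}^{(m)}_{D_0}/\Gamma$ is a singleton; such $D_0$ exists because $m$ is square-free and the genus-zero groups $\Gamma_0(m)+$ have been classified (their CM points of small discriminant are well understood), and the value $T_\Gamma(\alpha_Q)$ is then a "singular modulus" which is an algebraic integer, and rational (hence a rational integer) when the class number is one. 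I would make this concrete by exhibiting, for each of the finitely many genus-zero square-free $m$, an explicit small $|D_0|$ (e.g. $D_0 = -3$, $-4$, $-7$, $-8$, $-m$, $-3m$, $-4m$ as appropriate with the admissibility/square-mod-$4m$ constraint), and reading off $a$ and $\omega$ from Table \ref{tab:mts-T3C}-type data (or the tables in \S~\ref{sec:res-gen} and \S~\ref{app:ser-gen}). The main obstacle here — and the step I expect to require the most care — is verifying that a single-$\Gamma$-orbit $D_0$ with integral singular modulus $T_\Gamma(\alpha_Q)$ always exists for every genus-zero $\Gamma_0(m)+$ with $m$ square-free, and that the admissibility hypothesis (square mod $4m$) does not obstruct the choice; this is essentially a finite check across the classification of such $\Gamma$, combined with the classical theory of singular moduli and the structure of $\mathcal{Q}^{(m)}_{D_0}$ under Atkin-Lehner, and I would complete it case by case with the aid of the functions recorded in \S~\ref{app:ser-gen}.
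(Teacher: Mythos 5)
Your overall route is the same as the paper's: construct/characterize $F$ (Rademacher sum plus the one-dimensional theta ambiguity, killed by the vanishing constant term), apply Theorem 13.3 of \cite{Borcherds:1996uda} in its $O_{2,1}$ specialization from \S~4.3 of \cite{pmp} to get a weight-zero product with divisor supported on the CM points $\a_Q$, use genus zero of $\Gamma_0(m)+$ and the single cusp (from $m$ square-free) to identify it with the right-hand side, and settle the last claim by a finite check against \S~\ref{app:ser-gen}. Two points in your write-up, however, are genuine gaps. First, your assertion that ``by the general theory this lift is a meromorphic modular function on $X_\Gamma$'' is not correct as stated: the Borcherds product attached to $F$ transforms under $\Gamma_0(m)+$ only up to a unitary character of finite order (this is visible already in Table \ref{tab:bpd:bpp}, where the invariance groups are $n|h$-type groups strictly smaller than $\Gamma_0(m)+$), and in addition the exponents $\tfrac1{|\bar\Gamma_Q|}$ on the right of (\ref{eqn:generalBorPrds}) are fractional, so neither side is literally a modular function for $\Gamma$. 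Your divisor-matching argument therefore does not go through directly. The paper repairs this by taking $k$ minimal with $\chi^k=1$, working with the Borcherds product of $kF$ (which \emph{is} $\Gamma_0(m)+$-invariant, hence a rational function of $T_\Gamma$), matching its divisor and cusp behavior to get the $k$-th power of (\ref{eqn:generalBorPrds}) with some exponent $H$, forcing $H=H_\Gamma(D_0)$ by comparing leading powers of $q$, and only then extracting $k$-th roots. Any correct completion of your argument needs this (or an equivalent) step.

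Second, you do not justify why the interior divisor is exactly $\sum_Q \tfrac1{|\bar\Gamma_Q|}\cdot\a_Q$ with each $\Gamma$-class of forms of discriminant $D_0$ contributing once: the principal part of $F$ lives a priori in the components $r$ with $r^2\equiv D_0 \xmod 4m$, and one needs the fact that $O_m$ acts transitively on these $r$ when $m$ is square-free (which is where the trivial-character hypothesis on $\a$ enters) to read the divisor off Theorem 13.3 in the stated form. On the final claim, your theoretical speculation about integrality of singular moduli is not needed and is not what carries the argument; as in the paper, the claim rests on the explicit finite verification recorded in \S~\ref{app:ser-gen} (cf.\ the ``class number one'' remark: it suffices that $h_\Gamma(D_0)=|\mathcal{Q}^{(m)}_{D_0}/\Gamma|=1$ for some admissible $D_0$), so your plan to check case by case is the right one, but as written it is a plan rather than a proof.
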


\begin{proof}

Note first that the uniqueness of $F$ follows because the relevant space of theta series is $1$-dimensional when $m$ is square-free. Note also that $F$ has rational coefficients $C_F(D,r)$ according to the main result of \cite{MR1981614}. From here on in we make heavy use of Theorem 13.3 of \cite{Borcherds:1996uda} (or the specialized version that appears in \S~4.3 of \cite{pmp}). In particular, we deduce that the Borcherds product has weight $0$ by applying Item 1 of that result. 
By virtue of the hypothesis that $F$ belongs to the space $\textsl{V}^{\wh,\alpha}_{\frac12,m}$ with $\alpha$ the trivial character (cf.\ \eqref{eqn:Fartau}) we obtain that the associated Borcherds product is a weight $0$ function, transforming under $\Gamma_0(m)+$ according to a unitary character $\chi:\Gamma_0(m)+\to \CC^*$.

Now consider the smallest integer $k$ such that $\chi(\gamma)^k = 1$ for all $\gamma\in\Gamma_0(m)+$. Then the $k$-th power of the Borcherds product defined by $F$, i.e.\ the Borcherds product of $kF$, will have a trivial character, and thus be a weight $0$ modular function for the group $\Gamma_0(m)+$. Since $\Gamma_0(m)+$ is assumed to have genus zero this function defined by $kF$ is a rational function of $T_\Gamma$, and is completely determined by its zeros and poles. Since $m$ is square-free the group $\Gamma_0(m)+$ has only a single cusp, and the behavior of the Borcherds product at this cusp is controlled by the Weyl vector. The divisor in the interior of the upper half-plane can be read off from Theorem 13.3 of \cite{Borcherds:1996uda}. Using the fact that the action of $O_m$ on the $r\xmod 2m$ such that $D_0 \equiv r^2\xmod 4m$ is transitive when $m$ is square-free, we discover that this divisor is simply 
\begin{align}
    \sum_{Q\in\mathcal{Q}^{(m)}_{D_0}/\Gamma}\frac{k}{|\bar{\Gamma}_Q|}\cdot \a_Q.
\end{align}
Therefore, we have that
\begin{align}\label{eqn:generalBorPrds_kpow}
    q^{-kH}\prod_{n=1}^\infty(1-q^n)^{kC(n^2,n)} = \prod_{Q\in\mathcal{Q}^{(m)}_{D_0}/\Gamma} (T_\Gamma(\tau)-T_\Gamma(\a_Q))^{\frac{k}{|\bar{\Gamma}_Q|}}
\end{align}
for some $H$, 
and comparison of the leading powers of $q$ on both sides forces $H = H_\Gamma(D_0)$. Taking the $k$-th root on both sides of (\ref{eqn:generalBorPrds_kpow}) then gives us (\ref{eqn:generalBorPrds}).

The final claim of the theorem is confirmed by a finite check, the results of which we report in \S~\ref{app:ser-gen}.
\end{proof}

\begin{rmk}
We note a ``class number one'' property that underpins Proposition \ref{pro:generalBorPrds}. Namely, the Borcherds product (\ref{eqn:generalBorPrds}) takes the form (\ref{eqn:generalBorPrds_aomega}) whenever 
$h_\Gamma(D_0):=|\mathcal{Q}^{(m)}_{D_0}/\Gamma|
=1$.
\end{rmk}

\begin{rmk}
We have restricted to groups with square-free level $m$ and with all Atkin--Lehner cosets present. 
It would be interesting to see similar results for more general choices of $\Gamma=\Gamma_0(m)+S$ with genus zero. 
\end{rmk}

We conclude by mentioning that Proposition \ref{pro:generalBorPrds} may be motivated independently of moonshine. Indeed, a natural problem in the theory of automorphic forms is to characterize the image of the singular theta lift. Proposition \ref{pro:generalBorPrds} provides a positive answer, in the $O_{2,1}$ setting (cf.\ e.g.\ \cite{pmp}), for some of the replicable functions that arise in generalized monstrous moonshine (cf.\ \cite{for-mck-nor}). Do all such functions arise as Borcherds products $\Psi^W_g$ defined by 
weakly holomorphic $G$-modules $W$ of weight $\frac12$, for suitable groups $G$, via the construction $\SQ$?

%---------------------------------------------------------------------------------------%

\section{Summary and Outlook}\label{sec:sum}

%---------------------------------------------------------------------------------------%

Motivated by
two distinct manifestations 
of 
the Thompson sporadic simple group 
in moonshine (see \S\S~\ref{sec:int-woh}--\ref{sec:int-com}),
and noting that the corresponding moonshine modules are nonetheless related 
(\ref{eqn:int-com:breveF-31}--\ref{eqn:int-com:infiniteproductT3C})
by an infinite product formula, we have explored the relationship between these modules in detail in this work.

The relationship 
(\ref{eqn:int-com:breveF-31}--\ref{eqn:int-com:infiniteproductT3C})
has in turn motivated the lift $\SQ$ of the $O_{2,1}$-type singular theta lift of \cite{Harvey:1995fq,Borcherds:1996uda} (see \S~\ref{sec:int-con} and Theorem \ref{thm:SQ}), that we have established in \cite{pmp}. 
Here we have applied this lift (\ref{eqn:int-con:SQ}), and in so doing produced two new avatars of Thompson moonshine. 
One of these 
realizes a counterpart to penumbral Thompson moonshine in weight zero (see Theorem \ref{thm:res-ava:wz}).
The other 
serves as a counterpart to 3C-generalized monstrous moonshine in weight one-half (see Theorem \ref{thm:res-ava:wh}). 
We have discussed the similarities and differences between these modules in \S~\ref{sec:res-ava}.

In the hope that our work will help to illuminate the algebraic and physical origins of Thompson moonshine in weight one-half, 
and penumbral moonshine more generally, we have emphasized an analogy with monstrous moonshine (see \S~\ref{sec:int-com}), whereby the $\Th$-modules in weight one-half act as counterparts to the moonshine module, and the $\Th$-modules in weight zero act as root spaces for analogues of the monster Lie algebra. Curiously, the weight-zero counterpart to penumbral Thompson moonshine has the same graded dimension as the moonshine module, up to the constant term. 
Thus we have arrived at the possibility of two apparently different roles for the elliptic modular invariant, in connection with exceptional finite groups.

We have also proposed an inverse to our lift of the singular theta lift (\ref{eqn:int-con:SQ}), in the special setting of the 3C-generalized avatars (see Conjecture \ref{con:res-tra}), in terms of traces of singular moduli. We refer to \S~5 of \cite{pmp} for a more general discussion of the application of TSM to the problem of inverting $\SQ$, and comments on the import of the relationship between $\SQ$ and TSM for penumbral and umbral moonshine.
As explained in loc.\ cit., the problem of inverting $\SQ$ motivates the problem of extending the main result of \cite{pmp}, so as to lift---to the level of $G$-modules---the twisted Borcherds products that were introduced by Zagier \cite{ZagierTSM}, and reformulated in terms of twisted $O_{2,1}$-type singular theta lifts by Bruinier and Ono \cite{BruinierOno2010}.

There are a number of open paths for future research. 
Probably the most important of these are the two that we have highlighted in \S~\ref{sec:int-two}.
Namely, 
the illumination of the algebraic and physical origins of the four avatars of Thompson moonshine that appear in Figure \ref{fouravatars},
and the elucidation of counterparts to Figure \ref{fouravatars} that relate other cases of generalized monstrous moonshine to lambdencies of penumbral moonshine.
We have taken some steps towards a better understanding of the latter of these in \S~\ref{sec:res-gen}. 

Also in \S~\ref{sec:res-gen}, we have applied the idea that singular theta lifts have a special part to play in penumbral moonshine, so as to take
a step towards the discovery of penumbral moonshine at more general lambdencies (i.e.\ $\pd=(D_0,\ell)$ with $D_0<-4$) than those for which moonshine is confirmed in \cite{pmo} (see Proposition \ref{pro:generalBorPrds}).

With regard to the origins of Figure \ref{fouravatars}, 
we note that it follows from our results that the four avatars of Thompson moonshine 
essentially collapse to two, when considered in characteristic $3$. 
In weight zero, there is a vertex algebra defined over $\FF_3$ whose graded Brauer characters agree with the 
the corresponding McKay--Thompson series $\widetilde{T}_{\mathrm{3C},g}$ that arise in 3C-generalized monstrous moonshine.
In particular, this modular vertex algebra realizes moonshine for the Thompson group, and moreover its McKay--Thompson series agree with those of both 3C-generalized and penumbral Thompson moonshine in weight zero. 
This suggests that the relationship between 3C-generalized and penumbral Thompson moonshine in weight one-half might be made more clear by working in characteristic 3. 
For example, the two different avatars of Thompson moonshine in weight one-half might be two different lifts to characteristic zero, of some common modular $\Th$-module, which is related via Borcherds products to modular moonshine in weight zero.

Finally, 
we reiterate that the fact that Thompson moonshine in weight zero is related to Thompson moonshine in weight one-half via singular theta lifts suggests an analogy with the 
relationship between 
the monster Lie algebra
and 
the moonshine module VOA.
Is there an analogous interpretation in the present case? Is there a BKM with $\Th$-symmetry whose twined denominator formulae agree with the graded characters of $(V_{\mathrm{3C}}^\natural)^{\otimes  3}$? Can a string theory setup, perhaps along the lines of \cite{Paquette:2016xoo,Paquette:2017xui} (see also \cite{Harrison:2018joy,Harrison:2020wxl,Harrison:2021gnp} for related work), shed additional light on these questions?

\appendix 

\section{Forms}\label{app:ser}

Here we provide concrete specifications for the modular forms that appear in this work. 
We consider the McKay--Thompson series of Thompson moonshine in weight one-half 
in \S~\ref{app:ser-thm}, 
in \S~\ref{app:ser-thz}
we consider the McKay--Thompson series of Thompson moonshine in weight zero,
and 
we consider forms 
that arise from the discussion of \S~\ref{sec:res-gen} in \S~\ref{app:ser-gen}.

\subsection{Weight One-Half Thompson Moonshine}\label{app:ser-thm}

In this section we explain how to explicitly construct the functions $F^{(-3,1)}_g$ and $F_{{\rm 3C},g}$, for each $g\in \Th$. 
We begin with the 
$F^{(-3,1)}_g$, 
which are specified in terms of Rademacher sums (see \S~\ref{sec:pre-rad}) by the formula
\begin{align}\label{eqn:mts-thm:radF31}
F^{(-3,1)}_g(\tau)
=
2R^{[\vec\mu]}_{1,\Gamma_g,\chi_g}(\tau)
-c^{[\vec\mu]}_{1,\Gamma_g,\chi_g,1}(\tfrac14)\theta^0_1(\tau)
+\sum_{k>0}k^{(-3,1)}_g\theta^0_1(k^2\tau).
\end{align}
Here $\vec{\mu} = (0,-\frac34)$, and $R_{1,\Gamma_g,\chi_g}^{[\vec\mu]}$ and $c^{[\vec\mu]}_{1,\Gamma_g,\chi_g,1}(\tfrac14)$ are as in \eqref{eqn:unnormalizedrademacher}, 
where the $\Gamma_g=\Gamma^{(-3,1)}_g$ and $\chi_g=\chi^{(-3,1)}_g$ and $k_g^{(-3,1)}$ are determined as follows. 
Given $g\in \Th$, inspect the rows of Table \ref{tab:mts-thm:mod} labelled $(-3,1)$ in order to arrive at a corresponding symbol $N|h_v$, and for this purpose regard $N$ as a shorthand for $N|1_1$. 
Then take $\Gamma_g=\Gamma^{(-3,1)}_g=\Gamma_0(N)$ and $\chi_g=\chi^{(-3,1)}_g=\chi_{N,v,h}$, where 
\begin{gather}\label{eqn:chiNvh}
\chi_{N,v,h}
\left(\begin{matrix} a & b \\ c & d \end{matrix}\right) 
:= \ex\left(- v \frac{cd}{Nh}\right)
\end{gather}
for $\left(\begin{smallmatrix}a&b\\c&d\end{smallmatrix}\right)$ in $\Gamma_0(N)$ (so that $\chi^{(-3,1)}_g$ is trivial if the $|h_v$ is suppressed in Table \ref{tab:mts-thm:mod}).
Finally, inspect Table \ref{tab:mts-thm:thtF31}, where $k_g (k^2)$ appears as a shorthand for $k_g\theta^0_1(k^2\tau)$, for the non-zero $k_g=k^{(-3,1)}_g$.
(If a conjugacy class does not appear in Table \ref{tab:mts-thm:thtF31} then the corresponding $k_g^{(-3,1)}$ all vanish. 
Also, Rademacher sum specifications equivalent to those given here for $F^{(-3,1)}_g$ appeared earlier in \cite{Harvey:2015mca}, but with different notational conventions.)

\begin{table}[h!]
\begin{center}
\begin{small}
\caption{
Modularity of the $F^{(-3,1)}_g$ and $F_{{\rm 3C},g}$.
\label{tab:mts-thm:mod}}
\begin{tabular}{c|cccccccccc}
\toprule
$[g]$	&1A	&2A	&3A	&3B	&3C	&4A	&4B	&5A	&6A	&6B		\\
	\midrule
${(-3,1)}$ &$1$  &$2$ &$3|3_2$ &$3$ &$3\vert 3_1$ &$4$ &$4\vert 2_1$ &$5$ &$6\vert 3_2$ & $6\vert 3_1$   
\\[1pt]
3C 
&$1$  &$2$ &$3+3$ &$3$ &$3+\bar{3}$ &$4$ &$4\vert 2_1$ &$5$ &$6+\bar{3}$ & $6+3$   
\\
\midrule 
\midrule
$[g]$	&6C &7A & 8A & 8B & 9A & 9B & 9C & 10A & 12AB & 12C  	\\
\midrule
$(-3,1)$ &$6$  &$7$ &$8\vert 2_1$ &$8\vert 4_3$ &$9$ &$9$ &$9\vert 3_2$ &$10$ &$12\vert 3_2$ & $12$    \\[1pt]
3C &$6$  &$7$ &$8\vert 2_1$ &$8\vert 4_3$ &$9+9$ &$9$ &$9$ &$10$ &$12+3$ & $12$    \\
\midrule
\midrule
$[g]$	&  12D & 13A & 14A & 15AB & 18A & 18B & 19A &20A & 21A & 24AB  	\\
\midrule
$(-3,1)$ &$12\vert 6_5$  &$13$  &$14$ &$15\vert 3_2$ &$18$ &$18\vert 3_1$ &$19$ &$20\vert 2_1$ &$21\vert 3_2$ &$24\vert 6_5$       \\[1pt]
3C &$(12+3)|2_1$  &$13$  &$14$ &$15+ \bar{3}$ &$18+9$ &$18$ &$19$ &$20\vert 2_1$ &$21+3$ &$(24+3)|2_1$       \\
	\midrule
\midrule
$[g]$ & 24CD & 27A & 27BC & 28A & 30AB & 31AB & 36A & 36BC & 39AB &  \\
\midrule
$(-3,1)$ &$24\vert 12_{11}$  &$27\vert 3_2$ &$27\vert 3_2$ &$28$ &$30\vert 3_1$ &$31$ &$36$ &$36$ &$39\vert 3_2$ &      \\[1pt]
3C &$(24+3)|4_1$  &$27+?$ &$27+?$ &$28$ &$30+\bar{3}$ &$31$ &$36+9$ &$36+9$ &$39+3$ &      \\
\bottomrule
\end{tabular}
\end{small}
\end{center}
\end{table}

\begin{table}[h!]
\begin{center}
\begin{small}
\caption{\small Theta corrections for the $F^{(-3,1)}_g$. 
\label{tab:mts-thm:thtF31}
} 
\begin{tabular}{c|cccccccc}
\toprule
$[g]$	&3A	&4A	 & 9A & 9B  
& 12AB & 12C  &21A	
\\
	\midrule
  $\sum_kk^{(-3,1)}_{g}(k^2)$ & $18(9)$ &$8(4)$  & $ 6(9)$ &$-3(9)$ 
  &$-1(4)+3({36})$ & $-1(4)$  &$-\frac{3}{8}(9) $ 
  \\[2pt]
\midrule
\midrule
$[g]$ & 27A & 27BC & 28A  & 36A & 36BC & 39AB  \\
	\midrule
  $\sum_kk^{(-3,1)}_{g}(k^2)$ &$-1(9)+3(81)$ &$\frac{1}{2}(9)-\frac{3}{2}({81})$ &$1(4)$  &$2(4)-3({36})$ & $-1(4)$ &$\frac{9}{7}(9)$   \\ [2pt]
  \bottomrule
\end{tabular}
\end{small}
\end{center}
\end{table}

The 
functions $F_{{\rm 3C},g}$ 
are specified, at least for $g$ not of order $27$, by a similar prescription. Namely, when $o(g)\neq 27$ we have
\begin{align}\label{eqn:mts-thm:radF3C}
    F_{{\rm 3C},g}(\tau) = 3\left(R_{1,\Gamma_g,\chi_g}^{[\vec{\mu}]}(\tau)-c_{1,\Gamma_g,\chi_g,0}^{[\vec\mu]}(0)\theta_1^0(\tau) + \sum_{k>0}k_{{\rm 3C},g}\theta^0_1(k^2\tau)\right),
\end{align} 
where $\vec{\mu} = (0,-\frac{3}{4})$ as in (\ref{eqn:mts-thm:radF31}), the $R_{1,\Gamma_g,\chi_g}^{[\vec{\mu}]}$ and $c^{[\vec\mu]}_{1,\Gamma_g,\chi_g,0}(0)$ are again defined by \eqref{eqn:unnormalizedrademacher}, and the non-zero $k_{{\rm 3C},g}$ are read off from Table \ref{tab:mts-thm:thtF3C}. 

\begin{table}[h!]
\begin{center}
\begin{small}
\caption{\small Theta corrections for the $F_{{\rm 3C},g}$. 
\label{tab:mts-thm:thtF3C}
} 
\begin{tabular}{c@{ }|@{ }ccccc}
\toprule
$[g]$	&4A	 & 9AB   & 9C
& 12AB & 12C 
\\
	\midrule
  $\sum_kk_{{\rm 3C},g}(k^2)$ &  $-4(1)+4(4)$ & $\frac32(1)-\frac32(9)$& $-\frac32(1)+\frac32(9)$ 
&$-1(1)+1(4)$ &$\frac12(1)-\frac12(4)$ 
  \\[2pt]
 \midrule
 \midrule
$[g]$ 
&18A & 18B	
& 27A & 27BC  & 36ABC   \\
	\midrule
  $\sum_kk_{{\rm 3C},g}(k^2)$ 
& $-\frac32(1)+\frac32(9)$ & $\frac32(1)-\frac32(9)$
  & $?$   & $?$ & $\frac12(1)-\frac12(4)-\frac32(9)+\frac32(36)$ 
  \\[2pt]
  \bottomrule
\end{tabular}
\end{small}
\end{center}
\end{table}

We interpret the entries of Table \ref{tab:mts-thm:thtF3C} in the same way as those of Table \ref{tab:mts-thm:thtF31}, and $\Gamma_g=\Gamma_{{\rm 3C},g}$ and $\chi_g=\chi_{{\rm 3C},g}$ may be read off from the rows labelled 3C in Table \ref{tab:mts-thm:mod}, 
where a symbol $N|h_v$ is interpreted as in (\ref{eqn:mts-thm:radF31}).
The 3C rows of Table \ref{tab:mts-thm:mod} also feature entries of the form $(N+n)\vert h_v$ and $(N+\bar n)|h_v$, with $n$ an exact divisor of $N$. In this case $\chi_{{\rm 3C},g}=\chi_{N,v,h}$ as before (again taking $N+n$ as a shorthand for $(N+n)|h_v$, \&c.), and $N+n$ specifies $\Gamma_{{\rm 3C},g}$ according to the notational convention (\ref{eqn:Nplusnnprime}). 
The symbol $N+\bar{n}$ is interpreted similarly, except that the contribution of the Rademacher sum attached to $n$ in (\ref{eqn:mts-thm:radF3C}) should be subtracted rather than added. 

For the cases that $g$ belongs to 12D or 24ABCD there is an ambiguity 
in the definition of the contribution to the Rademacher sum attached to the exact divisor $n=3$, which we fix in each case by making the following specific choices of Atkin--Lehner involution $A$ as in \eqref{eqn:effectivemultipliersystem}.
For $g$ in 12D we take $A=\frac{1}{\sqrt{3}}\left(\begin{smallmatrix} 3 & -1 \\ 12 & -3\end{smallmatrix}\right)$, and for $g$ in 24ABCD we take $A=\frac{1}{\sqrt{3}}\left(\begin{smallmatrix} 3 & 1 \\ -48 & -15 \end{smallmatrix}\right)$.

The question marks in Tables \ref{tab:mts-thm:mod} and \ref{tab:mts-thm:thtF3C} emphasize the fact that we do not furnish expressions of the form (\ref{eqn:mts-thm:radF3C}) for $F_{{\rm 3C},g}$, for the $g\in \Th$ of order $27$ in this work.

\begin{table}[ht]
\begin{center}
\caption{Singular parts of the $\breve{F}_{g}^{(-3,1)}$. 
}\label{tab:mts-thm:sngF31}
\begin{small}
\begin{tabular}{c@{ }|@{ }cc@{ }|@{ }cc@{ }|@{ }cc@{ }|@{ }cc@{ }|@{ }cc@{ }|@{ }cc@{ }|@{ }cc@{ }|@{ }cc@{ }|@{ }c}
\toprule
\multicolumn{2}{c}{1A} & 
\multicolumn{2}{c}{3A} & \multicolumn{2}{c}{3B} & \multicolumn{2}{c}{3C} & 
\multicolumn{2}{c}{5A} & 
\multicolumn{2}{c}{7A} &
\multicolumn{2}{c}{9AB} & 
\multicolumn{2}{c}{9C} &
\multicolumn{2}{c}{13A} 
\\\midrule
$\frac{1}{2}$ & $\frac{-1}{q^{3}}$& 
$\frac{1}{18}$ & $\frac{-1}{q^{3}}$& $\frac{1}{6}$ & $\frac{-1}{q^{3}}$& $\frac{1}{18}$ & $\frac{-1}{q^{3}}$& 
$\frac{1}{10}$ & $\frac{-1}{q^{3}}$& 
$\frac{1}{14}$ & $\frac{-1}{q^{3}}$ &
$\frac{1}{18}$ & $\frac{-1}{q^{3}}$& 
$\frac{1}{54}$ & $\frac{-1}{q^{3}}$&
$\frac{1}{26}$ & $\frac{-1}{q^{3}}$ 
\\[3pt]
 \midrule
\end{tabular}

\begin{tabular}{c@{ }|@{ }cc@{ }|@{ }cc@{ }|@{ }cc@{ }|@{ }cc@{ }|@{ }cc@{ }|@{ }c}
\midrule
\multicolumn{2}{c}{15AB} & 
\multicolumn{2}{c}{19A} & 
\multicolumn{2}{c}{21A} & 
\multicolumn{2}{c}{27ABC} &
\multicolumn{2}{c}{31AB} & 
\multicolumn{2}{c}{39AB} 
\\\midrule
$\frac{1}{30}$ & $\frac{\omega^5}{q^3}$& 
$\frac{1}{38}$ & $\frac{-1}{q^{3}}$& 
$\frac{1}{42}$ & $\frac{\omega^5}{q^3}$& 
$\frac{1}{54}$ & $\frac{\omega^5}{q^3}$ &
$\frac{1}{62}$ & $\frac{-1}{q^{3}}$& 
$\frac{1}{78}$ & $\frac{\omega^5}{q^3}$ 
\\[3pt]
 \bottomrule
\end{tabular}
\end{small}
\end{center}
\end{table}

We also specify the functions $F^{(-3,1)}_g$ and $F_{{\rm 3C},g}$, up to theta series, by reporting the singular parts of their scalar-valued counterparts $\breve F^{(-3,1)}_g$ and $\breve F_{{\rm 3C},g}$ at cusps, and in this case we do include data for the $g\in \Th$ such that $o(g)=27$. 
The singular parts of the functions $\frac12\breve{F}^{(-3,1)}_g$ at non-infinite cusps are given in Table \ref{tab:mts-thm:sngF31}, where an entry $\alpha\,|\,\frac{\kappa}{q^{k}}$ indicates that $\kappa q^{-k}$ is the leading term in the Fourier expansion of  
\begin{gather}\label{eqn:mts-thm:breveF31atalpha}
\frac12\breve{F}^{(-3,1)}_g\left(\frac{a\tau+b}{c\tau+d}\right)
\frac1{\sqrt{c\tau+d}},
\end{gather} 
where $\left(\begin{smallmatrix}a&b\\c&d\end{smallmatrix}\right)$ is some element of ${\SL}_2(\mathbb{Z})$ such that $\frac a c=\alpha$. 
Here $\alpha\in \QQ$ is to be regarded as a representative of a cusp of $\Gamma_0(4N)$, where $N$ is as given in Table \ref{tab:mts-thm:mod}. 
In Table \ref{tab:mts-thm:sngF31} we only include entries for representatives of cusps where (\ref{eqn:mts-thm:breveF31atalpha}) is not bounded as $\Im(\tau)\to \infty$, and we also suppress the infinite cusp, 
because $\frac12\breve F^{(-3,1)}_g(\tau)=q^{-3}+O(1)$ as $\Im(\tau)\to\infty$ for all $g$. 
Thus, if a conjugacy class does not appear in Table \ref{tab:mts-thm:sngF31}, then the only cusp of $\Gamma_0(4N)$ at which $\breve{F}^{(-3,1)}_g$ has growth is the infinite cusp, and the expansion there is $2q^{-3}+O(1)$ as $\Im(\tau)\to \infty$.
In Table \ref{tab:mts-thm:sngF31} we use $\omega$ as a shorthand for $\ex(\frac16)$.

\begin{table}[ht]
\caption{Singular parts of the $\breve{F}_{{\rm 3C},g}$ for $3$-singular $g$. 
}\label{tab:mts-thm:sngF3C}
\vspace{-1em}
\begin{center}
\begin{small}
\begin{tabular}{c@{ }|@{ }cc@{ }|@{ }cc@{ }|@{ }cc@{ }|@{ }cc@{ }|@{ }cc@{ }|@{ }cc@{ }|@{ }cc@{ }|@{ }cc@{ }|@{ }cc@{ }|@{ }cc@{ }|@{ }c}\toprule
\multicolumn{2}{c}{3A} & \multicolumn{2}{c}{3B} & \multicolumn{2}{c}{3C} & 
\multicolumn{2}{c}{6A} & \multicolumn{2}{c}{6B} &
\multicolumn{2}{c}{9A} & \multicolumn{2}{c}{9B} & \multicolumn{2}{c}{9C} & 
\multicolumn{2}{c}{12AB} & 
\multicolumn{2}{c}{12D} &
\multicolumn{2}{c}{15AB} 
\\\midrule
$\frac{1}{18}$ & $\frac{-1}{q^{3}}$& $\frac{1}{6}$ & $\frac{-1}{q^{3}}$& $\frac{1}{18}$ & $\frac{-1}{q^{3}}$& 
$\frac{1}{8}$ & $\frac{\omega^{5}}{q}$& $\frac{1}{8}$ & $\frac{\omega^{2}}{q^{3}}$ &
$\frac{1}{18}$ & $\frac{-1}{q^{3}}$& $\frac{1}{18}$ & $\frac{-1}{q^{3}}$& $\frac{1}{54}$ & $\frac{-1}{q^{3}}$& 
$\frac{1}{16}$ & $\frac{\omega^{4}}{q^{3}}$& 
$\frac{1}{16}$ & $\frac{\omega^{4}}{q}$ &
$\frac{1}{30}$ & $\frac{-1}{q^{3}}$
\\[3pt]
 $\frac{1}{4}$ & $\frac{\omega^{4}}{q^{3}}$& \multicolumn{1}{c}{} & \multicolumn{1}{c}{} & $\frac{1}{4}$ & $\frac{\omega}{q^{3}}$& 
 \multicolumn{1}{c}{} & \multicolumn{1}{c}{} & \multicolumn{1}{c}{} & \multicolumn{1}{c}{} &
 $\frac{1}{4}$ & $\frac{\omega^{4}}{q^{3}}$& \multicolumn{1}{c}{} & \multicolumn{1}{c}{} & 
\multicolumn{1}{c}{} & \multicolumn{1}{c}{} & 
\multicolumn{1}{c}{} & \multicolumn{1}{c}{} & 
\multicolumn{1}{c}{} & \multicolumn{1}{c}{} &
$\frac{1}{20}$ & $\frac{\omega^{5}}{q^{3}}$ 
 \\[3pt]
$\frac{1}{2}$ & $\frac{\omega^{5}}{q^{3}}$& \multicolumn{1}{c}{} & \multicolumn{1}{c}{} & $\frac{1}{2}$ & $\frac{\omega^{2}}{q^{3}}$& 
\multicolumn{1}{c}{} & \multicolumn{1}{c}{} & \multicolumn{1}{c}{} & \multicolumn{1}{c}{} &
$\frac{1}{2}$ & $\frac{\omega^{5}}{q^{3}}$& \multicolumn{1}{c}{} & \multicolumn{1}{c}{} & 
\multicolumn{1}{c}{} & \multicolumn{1}{c}{} & 
\multicolumn{1}{c}{} & \multicolumn{1}{c}{} & 
\multicolumn{1}{c}{} & \multicolumn{1}{c}{} &
$\frac{1}{10}$ & $\frac{\omega^{4}}{q^{3}}$
\\[3pt]
\midrule
\end{tabular}

\begin{tabular}{c@{ }|@{ }cc@{ }|@{ }cc@{ }|@{ }cc@{ }|@{ }cc@{ }|@{ }cc@{ }|@{ }cc@{ }|@{ }cc@{ }|@{ }cc@{ }|@{ }cc@{ }|@{ }c}\midrule
\multicolumn{2}{c}{18A} & 
\multicolumn{2}{c}{21A} & 
\multicolumn{2}{c}{24AB} & 
\multicolumn{2}{c}{24CD} &
\multicolumn{2}{c}{27A} 
& \multicolumn{2}{c}{27BC}  
& \multicolumn{2}{c}{30AB} 
& \multicolumn{2}{c}{36ABC}
& \multicolumn{2}{c}{39AB} 
\\\midrule
$\frac{1}{8}$ & $\frac{\omega^{2}}{q^{3}}$& 
$\frac{1}{42}$ & $\frac{-1}{q^{3}}$& 
$\frac{1}{32}$ & $\frac{\omega^{5}}{q}$& 
$\frac{1}{32}$ & $\frac{i \, \omega^{2}}{q}$ &
$\frac{1}{54}$ & $\frac{-1}{q^{3}}$
& $\frac{1}{54}$ & $\frac{-1}{q^{3}}$
& $\frac{1}{40}$ & $\frac{\omega}{q^{3}}$
& $\frac{1}{16}$ & $\frac{\omega^{4}}{q^{3}}$ 
& $\frac{1}{78}$ & $\frac{-1}{q^{3}}$ 
\\[3pt]
\multicolumn{1}{c}{} & \multicolumn{1}{c}{} & 
$\frac{1}{28}$ & $\frac{\omega^{4}}{q^{3}}$& 
\multicolumn{1}{c}{} & \multicolumn{1}{c}{} & 
\multicolumn{1}{c}{} & \multicolumn{1}{c}{} &
$\frac{1}{36}$ & $\frac{\omega^{5}}{q^{3}}$
& $\frac{1}{36}$ & $\frac{\omega^{5}}{q^{3}}$
& \multicolumn{1}{c}{} & \multicolumn{1}{c}{} 
& \multicolumn{1}{c}{} & \multicolumn{1}{c}{} 
& $\frac{1}{52}$ & $\frac{\omega^{4}}{q^{3}}$ 
\\[3pt]
\multicolumn{1}{c}{} & \multicolumn{1}{c}{} & 
$\frac{1}{14}$ & $\frac{\omega^{5}}{q^{3}}$& 
\multicolumn{1}{c}{} & \multicolumn{1}{c}{} & 
\multicolumn{1}{c}{} & \multicolumn{1}{c}{} &
$\frac{1}{18}$ & $\frac{\omega^{4}}{q^{3}}$
& $\frac{1}{18}$ & $\frac{\omega^{4}}{q^{3}}$
& \multicolumn{1}{c}{} & \multicolumn{1}{c}{} 
& \multicolumn{1}{c}{} & \multicolumn{1}{c}{} 
& $\frac{1}{26}$ & $\frac{\omega^{5}}{q^{3}}$ 
\\[3pt]
\multicolumn{1}{c}{} & \multicolumn{1}{c}{} & 
\multicolumn{1}{c}{} & \multicolumn{1}{c}{} & 
\multicolumn{1}{c}{} & \multicolumn{1}{c}{} &
\multicolumn{1}{c}{} & \multicolumn{1}{c}{} &
$\frac{1}{12}$ & $\frac{\zeta^{24}}{q}$
& $\frac{1}{12}$ & $\frac{\zeta^{-12}}{q}$
& \multicolumn{1}{c}{} & \multicolumn{1}{c}{} 
& \multicolumn{1}{c}{} & \multicolumn{1}{c}{} 
& \multicolumn{1}{c}{} & \multicolumn{1}{c}{} 
\\[3pt]
\multicolumn{1}{c}{} & \multicolumn{1}{c}{} & 
\multicolumn{1}{c}{} & \multicolumn{1}{c}{} & 
\multicolumn{1}{c}{} & \multicolumn{1}{c}{} &
\multicolumn{1}{c}{} & \multicolumn{1}{c}{} &
$\frac{5}{36}$ & $\frac{\omega}{q^{3}}$
& $\frac{5}{36}$ & $\frac{\omega}{q^{3}}$
& \multicolumn{1}{c}{} & \multicolumn{1}{c}{} 
& \multicolumn{1}{c}{} & \multicolumn{1}{c}{} 
& \multicolumn{1}{c}{} & \multicolumn{1}{c}{} 
\\[3pt]
\multicolumn{1}{c}{} & \multicolumn{1}{c}{} & 
\multicolumn{1}{c}{} & \multicolumn{1}{c}{} & 
\multicolumn{1}{c}{} & \multicolumn{1}{c}{} &
\multicolumn{1}{c}{} & \multicolumn{1}{c}{} &
$\frac{1}{6}$ & $\frac{\zeta^{-42}}{q}$
& $\frac{1}{6}$ & $\frac{\zeta^{-6}}{q}$
& \multicolumn{1}{c}{} & \multicolumn{1}{c}{} 
& \multicolumn{1}{c}{} & \multicolumn{1}{c}{} 
& \multicolumn{1}{c}{} & \multicolumn{1}{c}{} 
\\[3pt]
\multicolumn{1}{c}{} & \multicolumn{1}{c}{} & 
\multicolumn{1}{c}{} & \multicolumn{1}{c}{} & 
\multicolumn{1}{c}{} & \multicolumn{1}{c}{} &
\multicolumn{1}{c}{} & \multicolumn{1}{c}{} &
$\frac{1}{4}$ & $\frac{\zeta^{-28}}{q}$
& $\frac{5}{18}$ & $\frac{\omega^{2}}{q^{3}}$
& \multicolumn{1}{c}{} & \multicolumn{1}{c}{} 
& \multicolumn{1}{c}{} & \multicolumn{1}{c}{} 
& \multicolumn{1}{c}{} & \multicolumn{1}{c}{} 
\\[3pt]
\multicolumn{1}{c}{} & \multicolumn{1}{c}{} & 
\multicolumn{1}{c}{} & \multicolumn{1}{c}{} & 
\multicolumn{1}{c}{} & \multicolumn{1}{c}{} &
\multicolumn{1}{c}{} & \multicolumn{1}{c}{} &
$\frac{5}{18}$ & $\frac{\omega^{2}}{q^{3}}$
& $\frac{5}{12}$ & $\frac{\zeta^{-6}}{q}$
& \multicolumn{1}{c}{} & \multicolumn{1}{c}{} 
& \multicolumn{1}{c}{} & \multicolumn{1}{c}{} 
& \multicolumn{1}{c}{} & \multicolumn{1}{c}{} 
\\[3pt]
\multicolumn{1}{c}{} & \multicolumn{1}{c}{} & 
\multicolumn{1}{c}{} & \multicolumn{1}{c}{} & 
\multicolumn{1}{c}{} & \multicolumn{1}{c}{} &
\multicolumn{1}{c}{} & \multicolumn{1}{c}{} &
$\frac{5}{12}$ & $\frac{\zeta^{-42}}{q}$
& $\frac{5}{6}$ & $\frac{\zeta^{-21}}{q}$
& \multicolumn{1}{c}{} & \multicolumn{1}{c}{} 
& \multicolumn{1}{c}{} & \multicolumn{1}{c}{} 
& \multicolumn{1}{c}{} & \multicolumn{1}{c}{} 
\\[3pt]
\multicolumn{1}{c}{} & \multicolumn{1}{c}{} & 
\multicolumn{1}{c}{} & \multicolumn{1}{c}{} & 
\multicolumn{1}{c}{} & \multicolumn{1}{c}{} &
\multicolumn{1}{c}{} & \multicolumn{1}{c}{} &
$\frac{1}{2}$ & $\frac{\zeta^{-14}}{q}$
& \multicolumn{1}{c}{} & \multicolumn{1}{c}{} 
& \multicolumn{1}{c}{} & \multicolumn{1}{c}{} 
& \multicolumn{1}{c}{} & \multicolumn{1}{c}{} 
& \multicolumn{1}{c}{} & \multicolumn{1}{c}{} 
\\[3pt]
\multicolumn{1}{c}{} & \multicolumn{1}{c}{} & 
\multicolumn{1}{c}{} & \multicolumn{1}{c}{} & 
\multicolumn{1}{c}{} & \multicolumn{1}{c}{} &
\multicolumn{1}{c}{} & \multicolumn{1}{c}{} &
$\frac{5}{6}$ & $\frac{\zeta^{15}}{q}$
& \multicolumn{1}{c}{} & \multicolumn{1}{c}{} 
& \multicolumn{1}{c}{} & \multicolumn{1}{c}{} 
& \multicolumn{1}{c}{} & \multicolumn{1}{c}{} 
& \multicolumn{1}{c}{} & \multicolumn{1}{c}{} 
\\[3pt]
 \bottomrule
\end{tabular}
\end{small}
\end{center}
\end{table}

The singular parts of the functions $\frac13\breve{F}_{{\rm 3C},g}$ at non-infinite cusps are given for $3$-singular $g$ (i.e.\ $g$ such that $o(g)\equiv 0 \xmod 3$) in Table \ref{tab:mts-thm:sngF3C}, according to the same conventions, except that $\frac12\breve{F}^{(-3,1)}_g$ should be replaced with $\frac13\breve{F}_{{\rm 3C},g}$ in (\ref{eqn:mts-thm:breveF31atalpha}).
We only include $3$-singular $g$ in Table \ref{tab:mts-thm:sngF3C} because $\frac13\breve{F}_{{\rm 3C},g}$ has the same singular parts as $\frac12\breve{F}^{(-3,1)}_g$ if $o(g)$ is coprime to $3$ (see (\ref{eqn:res-ava:3F31g=2F3Cgplustheta})). 
Also, the singular parts of $\frac13\breve F_{{\rm 3C},g}$ and $\frac12\breve{F}^{(-3,1)}_g$ at the infinite cusp of $\Gamma_0(4N)$ coincide for all $g$, and in addition to $\omega=\ex(\frac16)$ we use $\zeta=\ex(\frac1{108})$ in Table \ref{tab:mts-thm:sngF3C}.
Thus, the singular parts of all the $\frac13\breve{F}_{{\rm 3C},g}$, including those with $o(g)=27$, are determined in Tables \ref{tab:mts-thm:sngF31}--\ref{tab:mts-thm:sngF3C}.

Having determined their singular parts, the functions $\breve{F}^{(-3,1)}_g$ and $\breve{F}_{{\rm 3C},g}$ are precisely determined by finitely many of their Fourier coefficients at perfect square exponents. 
The coefficients given in \S~\ref{app:cff} are sufficient for this purpose.

\subsection{Weight Zero Thompson Moonshine}\label{app:ser-thz}

In this section we specify the McKay--Thompson series $T_{{\rm 3C},g}$ of weight zero 3C-generalized monstrous moonshine. 
We do not directly characterize the McKay--Thompson series $T^{(-3,1)}_g$ of weight zero penumbral moonshine, but we note that they are determined explicitly in terms of the $F^{(-3,1)}_g$ 
by the product formula (\ref{eqn:int-com:T31g}), according to Theorem \ref{thm:res-ava:wz}.

The $T_{{\rm 3C},g}$ are specified in Table \ref{tab:mts-T3C},  
where the first column runs over conjugacy class names, 
the second column indicates the invariance group of the principal modulus $\widetilde{T}_{\mathrm{3C},g}(3\tau)$, the third column indicates the invariance group of $\widetilde{T}_{\mathrm{3C},g}(\tau)^3$ (which turns out to be a Hauptmodul in every case), and the last column gives a description of $\widetilde{T}_{\mathrm{3C},g}$.

The notation in the second and third columns of Table \ref{tab:mts-T3C} is as in \S~\ref{sec:pre-fuc}, except for the classes 18B and 27ABC. 
For 27A and 27BC we follow the notational conventions of \cite{for-mck-nor}. 
For 18B we write
$18+\tilde{2},9,\tilde{18}$ 
for the group generated by $18+9$ and the matrix 
$\left(\begin{smallmatrix}4&-5/3\\6&-2\end{smallmatrix}\right)$, 
and we write
$54|3+\tilde{2},9,\tilde{18}$ 
for the group generated by the kernel of the homomorphism $\lambda:\Gamma_0(54|3)+9\to \ZZ/3\ZZ$ (cf.\ (\ref{eqn:Gamma0NverthplusS})) given by 
\begin{gather}
	\lambda\begin{pmatrix}
		ae&b/3\\
		54c&de
	\end{pmatrix}
	:=\begin{cases}
		a(b+c)\xmod 3&\text{for $e=1$,}\\
		-b(a+d)\xmod 3&\text{for $e=9$,}
	\end{cases}
\end{gather}
and the matrix 
$\left(\begin{smallmatrix}2&-1/9\\90&-4\end{smallmatrix}\right)$.
(The invariance groups of $\widetilde{T}_{\mathrm{3C},g}(3\tau)$ and $\widetilde{T}_{\mathrm{3C},g}(\tau)^3$ for $g$ in the class 18B are not given in op.\ cit., although the latter function is denoted there by 18z.)

\begin{table}[h!]
\begin{small}
\begin{center}
\caption{The McKay--Thompson series $T_{{\rm 3C},g}$.}
\label{tab:mts-T3C}
\begin{tabular}{c  c  c  c}
\toprule
$[g]$ & $\Inv(\widetilde{T}_{\mathrm{3C},g}(3\tau))$ & $\Inv(\widetilde{T}_{\mathrm{3C},g}(\tau)^3)$ & $\widetilde{T}_{\mathrm{3C},g}(\tau)$ \\
\midrule
1A    & $3\vert 3$   		& $1$ 	   				& $j(\tau)^{\frac13}$ \\
2A    & $6\vert 3$   		& $2$ 	   				& $1^8/2^8$ \\
3A    & $9\vert 3+$  		& $3+$ 	   				& $(1^{12}/3^{12}+729\cdot 3^{12}/1^{12} + 54)^{\frac13}$  \\
3B    & $9$         		& $3$ 	   				& $(1/3)^3/3^3+3$ \\
3C    & $9\vert 3$   		& $3$ 	   				& $1^4/3^4$ \\
4A    & $12\vert 3+$ 		& $4+$       			& $2^{16}/1^84^8$  \\
4B    & $12\vert 6$  		& $4\vert 2$ 			& $2^4/4^4$ \\
5A    & $15\vert 3$  		& $5$   				& $1^2/5^2$  \\
6A    & $18\vert 3+6$       & $6+6$ 				& $2^43^4/1^46^4$ \\
6B    & $18\vert 3+3$       & $6+3$ 				& $1^23^2/2^26^2$ \\
6C    & $18$        		& $6$  				& $2^13^3/1^1 6^3$\\
7A    & $21\vert 3+$        & $7+$  				& $(1^4/7^4+49\cdot 7^4/1^4+13)^{\frac13}$ \\
8A    & $24\vert 6+$        & $8\vert 2+$ 			& $4^8/2^48^4$\\
8B    & $24\vert 12$       & $8\vert 4$			& $4^2/8^2$  \\
9A    & $9$        		& $3$ 					& $(1/3)^3/3^3+3$ \\
9B    & $9\vert 3$        	& $3$ 					& $1^4/3^4$ \\
9C    & $27\vert 3+$        & $9+$ 					& $3^4/1^29^2$  \\
10A   & $30\vert 3+10$      & $10+10$ 				& $2^25^2/1^210^2$\\
12AB  & $36\vert 3+$       	& $12+$ 				& $2^46^4/1^23^24^212^2$ \\
12C   & $36+4$       		& $12+4$ 				& $1^14^16^6/2^23^312^3$ \\
12D   & $36\vert 6+6$      	& $12\vert 2 + 6$ 		& $4^16^2/2^212^2$\\
13A   & $39\vert 3+$      	& $13+$ 				& $(1^2/13^2+13\cdot 13^2/1^2+5)^{\frac13}$ \\
14A   & $42\vert 3+7$       & $14+7$ 				& $1^17^1/2^114^1$\\
15AB  & $45\vert 3+15$      & $15+15$ 				& $3^15^1/1^115^1$  \\
18A   & $18$       		& $6$    				& $2^13^3/1^16^3$\\
18B   &     $54\vert 3+\tilde{2},9,\tilde{18}$ 				&  $18+\tilde{2},9,\tilde{18}$						& $1^29^26^4/2^218^23^4$  \\
19A   & $57\vert 3+$       	& $19+$ 				& $q^{-\frac13}(G(\tau)G(19\tau)+q^4H(\tau)H(19\tau))$\\
20A   & $60\vert 6+10$      & $20\vert 2 + 10$		& $4^1 10^1/2^1 20^1$  \\
21A   & $63\vert 3+$       	& $21+$ 				& $(1^1 3^1/7^1 21^1 + 7 \cdot 7^1 21^1/3^1 1^1 + 1)^{\frac13}$  \\
24AB  & $72\vert 6+$      	& $24\vert 2+$ 			& $4^2 12^2/2^1 6^1 8^1 24^1$  \\
24CD  & $72\vert 12+6$  	& $24\vert 4+6$ 	   	& $8^1 12^1/4^1 24^1$  \\
27A   & $27\vert\vert 9 +=18.24.57$      			& $9\vert\vert 3+= 12$  & $(3^4/9^4 + 9 \cdot 9^4/3^4 + 6)^{\frac13}$  \\
27BC  & $27\vert\vert 9 +=12.45.78$   				& $9\vert\vert 3+= 12$  & $(3^4/9^4 + 9 \cdot 9^4/3^4 - 3)^{\frac13}$ \\
28A   & $84\vert 3+$    	& $28+$ 				& $2^2 14^2/1^1 4^1 7^1 28^1$  \\
30AB  & $90\vert 3+6,10,15$ & $30+6,10,15$ 			& $1^1 6^1 10^1 15^1/2^1 3^1 5^1 30^1$ \\
31AB  & $93\vert 3+$        & $31+$ 				& $q^{-\frac13}(H(\tau)G(31\tau)-q^6G(\tau)H(31\tau))$ \\
36ABC & $36+4$     			& $12+4$ 				& $1^1 4^1 6^6/2^2 3^3 12^3$ \\
39AB  & $117\vert 3+$    	& $39+$ 				& $(3^1 13^1/1^1 39^1 + 1^1 39^1/3^1 13^1 + 2)^{\frac13}$  
\\\bottomrule
\end{tabular}
\end{center}
\end{small}
\end{table}

For the notation in the last column of Table \ref{tab:mts-T3C} our conventions are as follows. 
We use $j$ to denote the elliptic modular invariant, as usual. A symbol of the form $c\cdot \prod_{b>0} b^{v_b}$ (cf.\ (\ref{eqn:int-com:etag})) is a proxy for the eta product $c\prod_{b>0} \eta(b\tau)^{v_b}$.
Sums and powers of such symbols indicate sums and powers of the corresponding eta products. Finally, $G$ and $H$ are the Rogers--Ramanujan functions, 
given by 
    $G(\tau) := \sum_{n\geq 0} {q^{n^2}}{(q;q)_n^{-1}}$
    and $H(\tau) := \sum_{n\geq 0} {q^{n^2+n}}{(q;q)_n^{-1}}$,
where 
$(a;q)_n := \prod_{k=0}^{n-1}(1-aq^k)$
is the $q$-Pochhammer symbol.

We used the references \cite{queen,for-mck-nor,Sloane_theencyclopedia}, some help from Simon Norton \cite{NortonUnpublished}, and some direct computation to compile Table \ref{tab:mts-T3C}.

\subsection{Genus Zero Borcherds Products}\label{app:ser-gen}

We now specify the modular forms that are promised by Proposition \ref{pro:generalBorPrds}. 
More specifically, for each genus zero group of the form $\Gamma=\Gamma_0(m){+}$ with $m$ square-free, we construct a function $F=(F_r)$ in $\textsl{V}^{\wh,\alpha}_{\frac12,m}$, for $\a$ the trivial character, 
for which the corresponding Borcherds product (\ref{eqn:generalBorPrds}) takes the form \eqref{eqn:generalBorPrds_aomega}.

Our results are given in Table \ref{tab:bpd:bpp}. The first three columns specify the modular form $F$, which takes the shape ·
\begin{align}\label{eqn:preimageJGamma}
   F= \Omega^\alpha_m (bR_{m,\SL_2(\ZZ),1}^{[\vec{\mu}]}+c\theta_m^0).
\end{align}
Here $\Omega^\alpha_m$ is as in (\ref{eqn:Omegaalpham}), the Rademacher sum $R_{m,\SL_2(\ZZ),1}^{[\vec{\mu}]}$ is as defined in \S~\ref{sec:pre-rad}, we take $\vec{\mu}=(\mu_r)$ to be the vector whose components are 
\begin{gather}\label{eqn:mur}
\mu_r := \frac{D_0}{4m}(\delta_{r,r_0}+\delta_{r,-r_0})
\end{gather} 
for $r\xmod 2m$, 
and the constants $b$ and $c$ are uniquely determined by requiring that the resulting series satisfies $Aq^{\frac{D_0}{4m}}(e_{r_0}+e_{-r_0})+O(q^{\frac{1}{4m}})$ as $\Im(\tau)\to\infty$, where $A$ is given in the third column of the tables. The value of $D_0$ we choose is the least negative one for which the Borcherds product of \eqref{eqn:preimageJGamma} takes the form \eqref{eqn:generalBorPrds_aomega}.

The last three columns describe properties of the Borcherds product (\ref{eqn:generalBorPrds}), after $\tau$ has been rescaled so that its $q$-expansion has integer exponents. 
The invariance group is $\Inv(\Psi)$, and $n\mathrm{Z}$ is the corresponding symbol in the notation of \cite{for-mck-nor}. The last column designates the constant term in the $q$-expansion if it is not zero. 

Note that the $a$ and $\omega$ in \eqref{eqn:generalBorPrds_aomega} may be extracted from Table \ref{tab:bpd:bpp} as follows. For a given $\Gamma$, the value of $\omega$ is the value of $A$ for which $\Inv(\Psi)=\Gamma$, and
the value of $a$ is the constant term associated to this value of $A$.
E.g.\ for $\Gamma=\Gamma_0(3)+$ we have $a=42$ and $\omega=6$.

\begin{center}
\begingroup%  to keep \arraystretch local
\renewcommand{\arraystretch}{0.9}
\begin{small}
\begin{longtable}{cccccc}
\caption{Borcherds Products for Proposition \ref{pro:generalBorPrds}.}\label{tab:bpd:bpp} \\

\multicolumn{1}{c}{$\Gamma$} & 
\multicolumn{1}{c}{$(D_0,r_0)$} & 
\multicolumn{1}{c}{$A$} & 
\multicolumn{1}{c}{$\Inv(\Psi)$} & 
\multicolumn{1}{c}{$n{\rm Z}$} &
\multicolumn{1}{c}{const.}
\\ \hline\hline
\endfirsthead

\multicolumn{6}{c}%
{{\tablename\ \thetable{}, \small{continued.}}} \\
\multicolumn{1}{c}{$\Gamma$} & 
\multicolumn{1}{c}{$(D_0,r_0)$} & 
\multicolumn{1}{c}{$A$} & 
\multicolumn{1}{c}{$\Inv(\Psi)$} & 
\multicolumn{1}{c}{$n{\rm Z}$} &
\multicolumn{1}{c}{const.}
\\ \hline\hline
\endhead

\hline
\endlastfoot

$1-$ & $(-3,1)$ 	 & $1$ &  $3\vert 3$ 				& $3\mathrm{C}$ \\ 
	 &				 & $3$ &  $1$                      & $1\mathrm{A}$ & $744$ \\ 
\hline
$2+$ & $(-4,2)$   & $1$ &  $8\vert 4+$ 				& $8\mathrm{C}$ 	\\ 
	 & 					 & $2$ &  $4\vert 2+$  				& $4\mathrm{B}$ 	\\
	 &					 & $4$ &  $2+$						& $2\mathrm{A}$  & $104$ \\
\hline
$3+$ & $(-3,3)$   & $1$ &  $18\vert 6+\bar{3}$ 			& $18\mathrm{b}$ 	\\
	 &					 & $2$ &  $9\vert 3+$				& $9\mathrm{a}$ 	\\
	 &					 & $3$ &  $6\vert 2+\bar{3}$				& $6\mathrm{b}$ 	\\
	 &					 & $6$ &  $3+$ 						& $3\mathrm{A}$  & $42$ \\
\hline
$5+$ & $(-4,4)$ & $1$ &  $10\vert 2+\bar{5}$ 			& $10\mathrm{a}$ &  \\
     &		 & $2$ &  $5+$ 						& $5\mathrm{A}$  & $16$ \\
\hline
$6+$ & $(-8,4)$  & $1$ & $12\vert 2+\bar{2}$ & $12\mathrm{b}$ &     \\
	 & 					 & $2$ &  $6+$						& $6\mathrm{A}$  & $10$ \\
\hline
$7+$ & $(-3,5)$   & $1$     &  $21\vert 3+$          & $21\mathrm{C}$ \\
     &                 & $3$     &  $7+$       			& $7\mathrm{A}$  & $9$ \\
\hline
$10+$& $(-4,6)$  & $1$ & $40\vert 4+$  & $40\mathrm{A}$ & \\
     &                 & $2$     & $20\vert 2+$           & $20\mathrm{B}$ & \\
     &                  & $4$     & $10+$ 					& $10\mathrm{A}$ & $4$ \\
\hline
$11+$& $(-7,9)$ 	 & $1$     & $11+$					& $11\mathrm{A}$ & $5$ \\
\hline
$13+$& $(-3,7)$  & $1$	   & $39\vert 3+$			& $39\mathrm{B}$ & $3$\\
	&					 & $3$	   & $13+$					& $13\mathrm{A}$ & \\
\hline
$14+$& $(-7,7)$ & $1$  & $28\vert 2+\bar{2},\bar{7},14$   & $28\mathrm{a}$ & \\
    &                   & $2$ 	   & $14+$					& $14\mathrm{A}$ & $2$ \\
\hline
$15+$& $(-11,7)$ & $1$  & $15+$ 				& $15\mathrm{A}$ & $3$ \\
\hline
$17+$& $(-4,8)$  & $1$      & $34\vert 2+\overline{17}$		& $34\mathrm{a}$ & \\
	&					& $2$		& $17+$ 				& $17\mathrm{A}$ & $2$ \\
\hline
$19+$   & $(-3,15)$    & $1$  & $57\vert 3+$ 		& $57\mathrm{A}$ &  \\
		&						& $3$ & $19+$				& $19\mathrm{A}$ & $3$ \\
\hline
$21+$   & $(-3,9)$   & $1$  & $126\vert 6+\bar{3},7,\overline{21}$ & $126\mathrm{a}$ \\
		&						 & $2$	& $63\vert 3+$& $63\mathrm{a}$ & \\
		&						 & $3$	& $42\vert 2+\bar{3},7,\overline{21}$& $42\mathrm{a}$ & \\
		&						 & $6$	& $21+$				& $21\mathrm{A}$ &  \\
\hline
$22+$	& $(-7,9)$   & $1$ & $22+$				& $22\mathrm{A}$ & $1$ \\
\hline
$23+$ 	& $(-7,19)$ 	 & $1$  & $23+$ 			& $23\mathrm{A}$ & $2$ \\
\hline
$26+$   & $(-4,10)$  & $1$ & $104\vert 4+$ & $104\mathrm{A}$ & \\
		&						& $2$  & $52\vert 2+$ 	& $52\mathrm{A}$ & \\
		&						& $4$	& $26+$			& $26\mathrm{A}$ & \\
\hline
$29+$ 	& $(-4,24)$ 	&	$1$	&$58\vert 2+\overline{29}$ & $58\mathrm{a}$ & \\
		&						&	$2$	& $29+$			 & $29\mathrm{A}$ & $2$ \\
\hline
$30+$   & $(-15,15)$ & $1$ & $\begin{array}{c}60\vert 2+\bar{2},\bar{3}, 5,\\ \ \ \ 6,\overline{10},\overline{15},30\end{array}$ & $60\mathrm{b}$ & \\
		&						&	$2$	  & $30+$		& $30\mathrm{B}$ & \\
\hline
$31+$   & $(-3,11)$  & $1$ 			& $93\vert 3+$				& $93\mathrm{A}$ & \\
		&					 & $3$			& $31+$						& $31\mathrm{A}$ & $3$ \\
\hline
$33+$   & $(-8,16)$  & $1$		& $33+$						& $33\mathrm{B}$ & $1$ \\
\hline
$34+$	& $(-4,26)$  & $2$		& $68\vert2+$				& $68\mathrm{A}$ & \\
		&						&	$4$		& $34+$						& $34\mathrm{A}$ & $2$ \\
\hline
$35+$ 	& $(-19,11)$&	$1$		& $35+$						& $35\mathrm{A}$ & $1$ \\
\hline
$38+$	& $(-8,12)$ & $1$     & $76\vert 2+\bar{2},\overline{19},38$		& $76\mathrm{a}$ & \\
		&							& $2$	& $38+$						& $38\mathrm{A}$ & \\
\hline
$39+$ 	& $(-3,33)$ &	$2$		& $117\vert 3+$				& $117\mathrm{a}$ & \\
		&						 &  $6$		& $39+$						& $39\mathrm{A}$ & $3$ \\ 
\hline
$41+$	& $(-4,18)$ 	  & $1$		& $82\vert 2+\overline{41}$			& $82\mathrm{a}$ & \\
		&					  & $2$		& $41+$						& $41\mathrm{A}$ & \\
\hline
$42+$   & $(-12,18)$ & $1$ & $84\vert 2+$ 		& $84\mathrm{A}$ & \\
		&						& $2$		& $42+$					& $42\mathrm{A}$ & \\
\hline
$46+$ 	& $(-7,19)$  & $1$		& $46+$						& $46\mathrm{CD}$&  \\
\hline
$47+$ 	& $(-11,41)$ 	  & $1$		& $47+$						& $47\mathrm{A}$ & $1$ \\
\hline
$51+$   & $(-8,14)$  & $1$		& $51+$ 					& $51\mathrm{A}$ & \\
\hline
$55+$  	& $(-11,33)$  & $2$ 	& $55+$						& $55\mathrm{A}$ & $1$ \\
\hline
$59+$ 	& $(-8,46)$  		& $1$ 	& $59+$						& $59\mathrm{A}$ & $1$ \\
\hline
$62+$ 	& $(-12,22)$ 	& $1$ 	& $62+$ 					& $62\mathrm{AB}$ & \\
\hline
$66+$   & $(-8,16)$  & $2$ & $66+$				& $66\mathrm{A}$ & $-1$ \\
\hline
$69+$	& $(-11,37)$  & $1$ & $69+$ 	& $69\mathrm{AB}$ & $1$ \\
\hline
$70+$ 	& $(-20,50)$  & $2$ & $70+$ 	& $70\mathrm{A}$ & $1$ \\
\hline
$71+$   & $(-7,63)$   			& $1$	&$71+$		& $71\mathrm{AB}$ & $1$\\
\hline
$78+$ 	& $(-12,66)$  & $2$ & $78+$	& $78\mathrm{A} $ & $1$ \\
\hline
$87+$   & $(-24,18)$ & $1$ & $87+$ & $87\mathrm{AB}$ & \\
\hline
$94+$   & $(-20,58)$  & $1$ & $94+$ & $94\mathrm{AB}$ & \\
\hline
$95+$   & $(-15,55)$  & $1$ & $95+$ & $95\mathrm{AB}$ & \\
\hline
$105+$  & $(-24,54)$  & $1$ & $105+$ & $105\mathrm{A}$ & $1$ \\
\hline
$110+$  & $(-24,36)$  & $1$ & $110+$ & $110\mathrm{A}$ & \\
\hline
$119+$  & $(-19,95)$  & $1$ & $119+$ & $119\mathrm{AB}$ & $1$ \\
\hline
\end{longtable}
\end{small}
\end{center}
\endgroup

\clearpage

\thispagestyle{tables}

\pagestyle{tables}

\section{Characters}\label{app:tgd}

We reproduce the character table of $\Th$ in Tables \ref{character_table_th_1}--\ref{character_table_th_2}.
In these tables we use the notation $A = -1+2i\sqrt{3}$, $B = -2+4i\sqrt{3}$, $C = \frac{-1+i\sqrt{15}}{2}$, $D = -i\sqrt{3}$, $E = i\sqrt{6}$, $F = \frac{-1+3i\sqrt{3}}{2}$, $G = \frac{-1+i\sqrt{31}}{2}$, $H = -1+i\sqrt{3}$, and $I = \frac{-1+i\sqrt{39}}{2}$.

\begin{table}[h!]
\begin{center}
\captionsetup{justification=centering}
\caption{Character table for the Thompson group, $\Th$, Part 1.}\label{character_table_th_1}
\begin{tiny}
% [inline block 0: 33 envs, 173205 chars -> data_tex | \begin{tabular}{c@{ }r@{ }r@{ }r@{ }r@{ }r@{ }r@{ }r@{ }r@{ }r@{ }r@{ }r@{ }r@{ }r@{ }r@{ }r@{ }r@{ }r@{ }r@{ }r@{ }r@{ ...]

\end{center}
\end{scriptsize}
\end{sidewaystable}

%------------------------------------------------------------------%
\clearpage
%------------------------------------------------------------------%

\addcontentsline{toc}{section}{Declarations}

\section*{Declarations}

\subsection*{Competing Interests}

The authors declare no competing financial or non-financial interests directly or indirectly related to this article.

\subsection*{Data Availability}

All relevant data is available in the appendices of this article.

\addcontentsline{toc}{section}{References}

\newcommand{\etalchar}[1]{$^{#1}$}

\end{document}